\documentclass[a4paper]{amsart}

\usepackage[utf8]{inputenc}
\DeclareUnicodeCharacter{00A0}{ }
\usepackage{amssymb}
\usepackage{MnSymbol}
\usepackage{enumitem}
\usepackage[colorlinks,pdfpagelabels,pdfstartview = FitH,bookmarksopen = true,bookmarksnumbered = true,linkcolor = blue,plainpages = false,hypertexnames = false,citecolor = red] {hyperref}


\usepackage{cleveref}
\crefname{enumi}{}{}
\crefname{equation}{}{}

\allowdisplaybreaks

\title[A Carleson type inequality]{A Carleson type inequality for fully nonlinear elliptic equations with non-Lipschitz drift term}


\author[Avelin]{Benny Avelin}
\address{Benny Avelin,
Aalto University, 
Institute of Mathematics,
P.O. Box 11100, 
FI-00076 Aalto, 
Finland}
\address{Benny Avelin,
Department of Mathematics, 
Uppsala University,
S-751 06 Uppsala, 
Sweden} 
\email{\color{blue} benny.avelin@math.uu.se}
\author[Julin]{Vesa Julin}
\address{Vesa Julin,
Department of Mathematics and Statistics, 
University of Jyv\"askyl\"a,
P.O. Box 35, 
40014 Jyv\"askyl\"a, 
Finland} 
\email{\color{blue} vesa.julin@jyu.fi}


\newtheorem{theorem}{Theorem}[section]

\newtheorem{lemma}{Lemma}[section]

\newtheorem{corollary}{Corollary}[section]
\theoremstyle{definition}
\newtheorem{definition}{Definition}
\newtheorem{remark}{Remark}[section]

\numberwithin{equation}{section}



\def\ep{\varepsilon}
\renewcommand{\varrho}{\rho}

\newcommand{\eps}{\varepsilon}


\def\dist{\operatorname{dist}}

\DeclareMathOperator*{\osc}{osc}


\def\en{\mathbb N}
\def\er{\mathbb R}

\def\rn{\mathbb R^N}


\newtoks\by
\newtoks\paper
\newtoks\book
\newtoks\jour
\newtoks\yr
\newtoks\pages
\newtoks\vol
\newtoks\publ

\def\name[#1, #2]{#1 #2}
\def\ota{{\hbox{\bf ???}}}
\def\cLear{\by=\ota\paper=\ota\book=\ota\jour=\ota\yr=\ota
\pages=\ota\vol=\ota\publ=\ota}
\def\endpaper{\the\by, \textit{\the\paper},
{\the\jour} \textbf{\the\vol} (\the\yr), \the\pages.\cLear}
\def\endbook{\the\by, \textit{\the\book},
\the\publ, \the\yr.\cLear}
\def\endpap{\the\by, \textit{\the\paper}, \the\jour.\cLear}
\def\endproc{\the\by, \textit{\the\paper}, \the\book, \the\publ,
\the\yr, \the\pages.\cLear}

\begin{document} 
\begin{abstract}
	This paper concerns the boundary behavior of solutions of certain fully nonlinear equations with a general drift term. We elaborate on the non-homogeneous generalized Harnack inequality proved by the second author in \cite{Vesku}, to prove a generalized Carleson estimate. We also prove boundary H\"older continuity and a boundary Harnack type inequality. 
\end{abstract}
\maketitle

\section{Introduction} In this paper we study the boundary behavior of solutions of the following non-homogeneous, fully nonlinear equation 
\begin{equation}
	\label{thePDE} F(D^2u, Du, x) = 0\,. 
\end{equation}
The operator $F$ is assumed to be elliptic in the sense that there are $0< \lambda \leq \Lambda$ such that 
\begin{equation}
	\label{ellipticity} \lambda \text{Tr}(Y) \leq F(X, p, x) - F(X +Y, p, x)\leq \Lambda \text{Tr}(Y)\,, \quad \forall (x,p) \in \rn \times \rn 
\end{equation}
for every pair of symmetric matrices $X, Y$ where $Y$ is positive semidefinite. We assume that $F$ has a drift term which satisfies the following growth condition 
\begin{equation}
	\label{non-homogeneity} |F(0, p, x) \big| \leq \phi(|p|)\,, \quad \forall (x,p) \in \rn \times \rn 
\end{equation}
where $\phi: [0,\infty) \to [0,\infty)$ is continuous, increasing, and satisfies the structural conditions from \cite{Vesku} (see \Cref{secprel}). Note that the function $F(\cdot, p ,x)$ is $1$-homogeneous while $F(0,\cdot,x)$ in general is not. In the case there is no drift term, i.e., $\phi=0$, we say that the equation \cref{thePDE} is homogeneous. 

The problem we are interested in is the so-called Carleson estimate \cite{Carl}. The Carleson estimate can be stated for the Laplace equation in modern notation as follows. Let $\Omega \subset \rn$ be a sufficiently regular bounded domain and $x_0 \in\partial \Omega$ . If $u$ is a non-negative harmonic function in $ B(x_0,4R) \cap \Omega$ which vanishes continuously on $\partial \Omega \cap B(x_0,4R)$, then 
\begin{equation}
	\label{lapcarl} \sup_{B(x_0,R/C) \cap \Omega} u \leq C u(A_R)\,, 
\end{equation}
where the constant $C$ depends only on $\partial \Omega$ and $N$, and where $A_R \in B(x_0,R/C) \cap \Omega $ such that $d(A_R,\partial \Omega) > R/C^2$ ($A_R$ is usually called a corkscrew point). For $\Omega$ to be sufficiently regular it is enough to assume that $\Omega$ is e.g., an NTA-domain, see \cite{JK}. The Carleson estimate is very important and useful when studying the boundary behavior and free boundary problems for linear elliptic equations \cite{CFMS,CFS,JK,K}, for $p$-Laplace type elliptic equations \cite{ALuN,ALuN1,AN1,LLuN,LN1,LN5,LN6}, for parabolic $p$-Laplace type equations \cite{A1,AGS}, and for homogeneous fully nonlinear equations \cite{Fe,FS,FS1}. 

In this paper we deal with either Lipschitz or $C^{1,1}$ domains and assume that they are locally given by graphs in balls centered at the boundary with radius up to $R_0 > 0$ which unless otherwise stated satisfies $R_0 \leq 16$. For a given Lipschitz domain with Lipschitz constant $l$ we denote $L = \max\{l,2 \}$. The main result of this paper is the sharp Carleson type estimate for non-negative solutions of \cref{thePDE}. Due to the non-homogeneity of the equation it is easy to see that \cref{lapcarl} cannot hold. Instead the Carleson estimate takes a similar form as the generalized interior Harnack inequality proved in \cite{Vesku} (see \Cref{weakHarnack}). Our main result reads as follows. 
\begin{theorem}
	\label{mainthm} Assume that $\Omega$ is a Lipschitz domain such that $0 \in\partial \Omega$ and assume $u \in C(B_{4R}\cap \overline{\Omega})$, with $R \in (0,R_0/4]$, is a non-negative solution of \cref{thePDE}. Let $A_R \in B_{R/2L} \cap \Omega $ be a point such that $d(A_R,	\partial \Omega) > R/(4L^2)$, and assume that $u = 0$ on $\partial \Omega \cap B_{4 R}$. There exists a constant $C > 1$ which is independent of $u$ and of the radius $R$ such that 
	\begin{equation*}
		\int_{u(A_R)}^{M} \frac{dt}{R^2 \phi(t/R)+t} \leq C\,, 
	\end{equation*}
	where $ M = \sup_{B_{R/C} \cap \Omega} u$. 
\end{theorem}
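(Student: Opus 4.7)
The plan is to argue by contradiction, adapting the classical Harnack-chain strategy for the Carleson estimate (as in Caffarelli--Fabes--Mortola--Salsa \cite{CFMS} and Jerison--Kenig \cite{JK}) to the non-homogeneous generalized Harnack inequality \Cref{weakHarnack} of \cite{Vesku}. It is convenient to introduce
\[
\Phi_r(\tau) := \int_0^\tau \frac{dt}{r^2 \phi(t/r) + t},
\]
so that the generalized Harnack inequality on a ball $B_r(x_0) \subset \Omega$ takes the additive form $\Phi_r(\sup_{B_{r/2}(x_0)} u) - \Phi_r(\inf_{B_{r/2}(x_0)} u) \leq C_H$, replacing the multiplicative Harnack $\sup \leq C \inf$ of the homogeneous case. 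The target conclusion then reads $\Phi_R(M) - \Phi_R(u(A_R)) \leq C$.

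Suppose, toward a contradiction, that $\Phi_R(M) - \Phi_R(u(A_R)) > K$ for $K$ to be chosen large. Pick $y^* \in \overline{B_{R/C} \cap \Omega}$ almost realizing $M$. If $d(y^*, \partial \Omega) \geq \eta R$ for a fixed $\eta > 0$, the Lipschitz geometry of $\Omega$ and the corkscrew property of $A_R$ yield a Harnack chain of $N = N(\eta, L)$ interior balls of radius $\sim R$ connecting $A_R$ to $y^*$; iterating \Cref{weakHarnack} along this chain gives $\Phi_R(u(y^*)) - \Phi_R(u(A_R)) \leq N C_H$, contradicting the assumption as soon as $K > N C_H$. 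Otherwise $d(y^*, \partial \Omega) = \delta \ll R$, and we use the Lipschitz graph representation of $\partial \Omega$ to produce a nested sequence of points $z_0 = y^*, z_1, z_2, \ldots \in \Omega$ with $d_k := d(z_k, \partial \Omega) \to 0$ and overlapping balls $B(z_k, c d_k) \subset \Omega$ staying inside $B_R$. Applying the generalized Harnack on each and exploiting monotonicity of the integrand in $r$, we show that the assumption $\Phi_R(M) - \Phi_R(u(A_R)) > K$ forces $u(z_k)$ to stay bounded away from $0$ as $k \to \infty$; since $u$ is continuous on $\overline{\Omega} \cap B_{4R}$ and vanishes on $\partial \Omega \cap B_{4R}$, this is a contradiction.

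The main technical obstacle is matching the scale $r$ appearing in the generalized Harnack inequality, which couples $t$ and $r$ non-trivially through $\phi(t/r)$, with the fixed scale $R$ appearing in the statement. In the homogeneous case $\phi \equiv 0$ the integrand reduces to $1/t$ and $\Phi_r$ is scale-invariant, so the classical argument goes through essentially unchanged; in the non-homogeneous case one must invoke the structural assumptions on $\phi$ from \cite{Vesku} to compare $\Phi_r$ and $\Phi_R$ for $r \leq R$ without losing the correct power of $R$, and to use the boundary H\"older estimate asserted elsewhere in the paper to quantitatively control how fast $u(z_k) \to 0$. Closing the iteration so that the final quantitative bound is expressed in terms of $\Phi_R$, rather than a small-scale integral $\Phi_{d_k}$, is the most delicate step and is what makes the integrand in the statement of the theorem take precisely the form $R^2 \phi(t/R) + t$ rather than a scale-dependent analogue.
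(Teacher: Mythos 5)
Your outline reproduces the classical CFMS scheme that the paper itself follows (boundary H\"older decay played against an iteration toward the boundary, ending in a contradiction with the zero boundary data), but the step you explicitly postpone --- closing the iteration quantitatively at scales $d_k \to 0$ --- is exactly where the content of the theorem lies, and your sketch contains no mechanism for it. The missing ingredient is an upper bound on the blow-up rate, i.e.\ an estimate of the form $\sup\{u(x) : d(x,\partial\Omega)\ge s\} \le C s^{-\gamma}\cdot(\text{a controlled level})$. In the homogeneous case this comes from dyadic iteration of the Harnack inequality, but here iterating \Cref{weakHarnack} (in the rescaled form of \Cref{corHarnack}) on balls of radius comparable to $2^{-k}$ produces at each step a correction term $2^{-k\alpha}\eta_R(t)$ evaluated at values $t$ comparable to the running supremum, which is itself growing along the very chain you are building; ``monotonicity of the integrand in $r$'' does not rule out that $\eta_R(M_s)$ grows faster than $s^{-\alpha}$, in which case the chained estimate degenerates and yields no polynomial rate at all (indeed, when \cref{osgood2} fails the supremum can genuinely be infinite). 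The paper's resolution is the critical-value argument: \Cref{blow up A} produces a scale $S$ and level $M_S$ with $\int_{u(A)}^{M_S}\frac{dt}{R\phi(t)+t}\le C$ and $S^\alpha\eta_R(M_S)\le\delta$, and \Cref{blow up b} shows by a delicate induction (with the constants tuned in \cref{choice of delta_1} and \cref{choice of delta_2}, using the slow variation of $\eta$) that below $S$ one retains $s^\alpha\eta_R(M_s)\le C$ and hence the homogeneous rate $M_s\le C s^{-\gamma}M_S$. This dichotomy is what \Cref{blow up 2} packages, and it is precisely what your argument would have to prove from scratch; nothing in your sketch substitutes for it.

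A second, related gap: your claim that the contradiction hypothesis forces $u(z_k)$ to ``stay bounded away from $0$'' is asserted, not derived. In the paper's iteration one needs simultaneously $u(P_{i+1}) \ge 2^{\gamma} u(P_i)$ and $d(P_{i+1}) \le d(P_i)/2$, and both halves lean on the blow-up control: the boundary H\"older estimate of \Cref{bdry holder cont} is not the clean homogeneous one but carries an additive error term $C_1\,(R\phi(M)+M)\,\rho^{2\alpha}r^{2\alpha}$ (its proof in turn needs \Cref{maximal comparison function} and \Cref{almost maximum princ}, since the comparison principle fails for this equation), and absorbing that error in the iteration uses exactly the bound $d(P_i)^{\alpha}\eta_R(u(P_i))\le C_2$ from \cref{S3}, while the distance halving comes from \cref{S2}; moreover \cref{S1} is what ties the critical level $M_S$ back to $u(A_R)$ so that the final bound is in terms of the corkscrew value. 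Without these inputs your iteration neither keeps $u(z_k)$ large nor forces $d(z_k)\to 0$ inside the region where $u$ vanishes, so no contradiction is reached. In short: the skeleton is right and matches the paper's, but the non-homogeneous heart of the proof --- the existence of the critical value and the post-critical blow-up rate --- is missing.
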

This result is sharp since already the interior Harnack estimate is sharp. The novelty of \Cref{mainthm} is that the constant does not depend on the solution itself. 

Let us point out a few consequences of \Cref{mainthm}. Let us assume that $u$ is as in the theorem. First, if $\phi$ satisfies 
\begin{equation}
	\label{osgood2} \int_1^{\infty} \frac{dt}{\phi(t)} = \infty 
\end{equation}
then there is an increasing function $f_R$ such that the maximum $M$ is bounded by the value $f_R(u(A_R))$. The function $f_R$ is defined implicitly by the estimate in the theorem. If $\phi$ does not satisfy \cref{osgood2} then the maximum $M$ may take arbitrary large values (see \cite{Vesku}). However, even if $\phi$ does not satisfy \cref{osgood2} we may still deduce that if $u(A_R) \leq 1$ then the maximum $M$ is uniformly bounded assuming that the radius $R$ is small enough. This follows from the fact that $R^2 \phi(t/R) \to 0$ locally uniformly under our growth assumption on $\phi$. Second if $\phi$ satisfies the Osgood condition 
\begin{equation}
	\label{osgood1} \int_0^1 \frac{dt}{\phi(t)} = \infty 
\end{equation}
then $u(A_R) = 0$ implies that $u$ is zero everywhere. In other words \cref{osgood1} implies the strong minimum principle. If $\phi$ does not satisfy \cref{osgood1} then the strong minimum principle does not hold. Finally, in the homogeneous case $\phi = 0$ \Cref{mainthm} reduces to the classical Carleson estimate. 

In the homogeneous case, perhaps the most flexible proof of the Carleson estimate is due to \cite{CFMS} and has been adapted to many situations, see e.g., \cite{AdLu,A1,AGS,CNP,FaSa1,FaSa2}. This proof relies on two basic estimates: 
\begin{enumerate}
	[label=(\arabic*)] 
	\item \label{hold} A decay estimate up to the boundary (H\"older continuity), sometimes denoted by the oscillation lemma. 
	\item \label{blow} An upper estimate of the blow-up rate for singular solutions. 
\end{enumerate}
The point is that the rate of blow-up dictated by \cref{blow} does not need to be sharp, this is because it only needs to match the geometric decay dictated by \cref{hold}.

Let us make some notes regarding the proof of \cref{hold} and \cref{blow} in the homogeneous case. In the context of divergence form equations, the proof of \cref{hold} is standard and follows e.g., from the flexible methods developed by De Giorgi \cite{DeG}, and is thus valid for very general domains (outer density condition). However, in the context of non-divergence form equations, this is far from trivial if the domain is irregular. In fact, in Lipschitz domains it is basically only known for linear equations, and the proof relies on the classical result by Krylov and Safonov \cite{KS1, KS2}. For more regular domains the approach is usually via flattening, symmetry and iterating the Harnack inequality. If the Harnack inequality for a non-negative solution in $B_{2R}$ holds, i.e., 
\begin{equation}
	\nonumber \sup_{B_R} u \leq C \inf_{B_R} u\,, 
\end{equation}
for a constant $C$ independent of $u$ and $R$, then a well known proof of \cref{blow} consists of iterating the Harnack inequality in a dyadic fashion up to the point of singularity. 

Due to the non-homogeneity of our equation the classical Harnack inequality no longer holds, and we will instead use the generalized Harnack inequality, which states that a non-negative solution $u \in C(B_{2R})$ of \cref{thePDE} with $R \leq 1$ satisfies 
\begin{equation}
	\label{px new harnack} \int_{m}^{M} \frac{dt}{R^2\phi(t/R)+ t} \leq C, 
\end{equation}
where $m = \inf_{B_R} u$, $M = \sup_{B_R} u$ and $C$ is a constant which is independent of $u$ and $R$. To continue our discussion it is important to note that the standard Harnack inequality for harmonic functions can be written as 
\begin{equation}
	\nonumber \int_m^M \frac{dt}{t} \leq C. 
\end{equation}
As such, the term $R^2 \phi(t/R)$ in \cref{px new harnack} is the non-homogeneous correction term which compensates the effect of \cref{non-homogeneity}. When using \cref{px new harnack} the ''contest'' between the correction term and the base term $t$ becomes evident. When we study the blow-up rate \cref{blow} for solutions of \cref{thePDE} (\Cref{blow up 2}) our goal is to show that for every solution there exists a critical threshold level where the correction term becomes small and stays small, all the way up to the singularity. This means that the asymptotic behavior after the critical level is the same as in the homogeneous case. This argument strongly relies on the structural assumptions on $\phi$ which imply that $\phi(t)= \eta(t)t$ for a slowly increasing function $\eta$. Similarly when we prove the H\"older continuity estimate (\Cref{holder cont,bdry holder cont}) we show that there is a critical radius such that below it the oscillation of the solution reduces in a geometric fashion. Again the point is to quantify the critical radius. 

\subsection{First application: Boundary Harnack inequality}

In the last section of this paper, we consider the boundary Harnack inequality. Our contribution in this direction is the same as for the Carleson estimate, i.e., we derive an estimate where the constant does not depend on the solution. The proof is based on a barrier function estimate and this requires the domain to be $C^{1,1}$-regular. 
\begin{theorem}
	\label{the boundary Harnack} Assume that $\Omega$ is a $C^{1,1}$-regular domain such that $0 \in \partial \Omega$. Let $u,v \in C(B_{4R} \cap \overline{\Omega})$, with $R \in (0,R_0/4]$, be two positive solutions of \cref{thePDE}. Let $A_R \in B_{R/2L} \cap \Omega $ be such that $d(A_R,\partial \Omega) > R/(4L^2)$ and assume that $v(A_R)= u(A_R)>0$ and $v = u = 0$ on $\partial \Omega \cap B_{4R}$. There exists a constant $C$, which is independent of $u,v$ and of the radius $R$, and numbers $\mu_0, \mu_1 \in [0,\infty]$ such that $\mu_0 \leq u(A_R) \leq \mu_1$, 
	\begin{equation*}
		\sup_{x \in B_{R/C} \cap \Omega} \frac{v(x)}{u(x)} \leq \frac{\mu_1}{\mu_0} \,, 
	\end{equation*}
	and 
	\begin{equation*}
		\int_{\mu_0}^{\mu_1} \frac{dt}{R^2 \phi(t/R)+t} \leq C\,. 
	\end{equation*}
\end{theorem}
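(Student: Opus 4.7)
My plan is to sandwich both solutions on $B_{R/C}\cap\Omega$ between two scalar multiples of a common boundary-behavior function, available because $\Omega$ is $C^{1,1}$, so that their ratio is bounded uniformly. The upper multiple $\mu_1$ will come from applying Theorem \ref{mainthm} to $v$ together with an exterior supersolution barrier, while the lower multiple $\mu_0$ will come from an interior subsolution barrier combined with the generalized Harnack inequality \eqref{px new harnack} for $u$. The Harnack-integral distances from $\mu_0$ to $u(A_R)$ and from $u(A_R)$ to $\mu_1$ will each be $O(1)$, and their concatenation yields the integral bound in the statement.

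First, I would apply Theorem \ref{mainthm} to $v$: this produces $\mu_1^\star$ and a constant $C_1$ with $\sup_{B_{R/C_1}\cap\Omega} v \le \mu_1^\star$ and $\int_{u(A_R)}^{\mu_1^\star} dt/(R^2\phi(t/R)+t)\le C_1$. To upgrade this $L^\infty$ bound to the linear boundary decay $v(x)\le \mu_1\, d(x,\partial\Omega)/R$ needed to match $u$ from above, I would use, at each $z\in\partial\Omega \cap B_{R/C}$, the exterior tangent ball $B(y^\star,\rho_0 R)$ supplied by $C^{1,1}$-regularity together with a radial supersolution $\omega(x)=a\bigl(\rho_0^{-\alpha}R^{-\alpha}-|x-y^\star|^{-\alpha}\bigr)$ in an annular region $\Omega \cap B(y^\star,R')$. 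For $\alpha=\alpha(n,\lambda,\Lambda)$ large one has $\mathcal{M}^+(D^2\omega)\le -c\,a/R^2$, so that $\omega$ genuinely satisfies $F(D^2\omega,D\omega,\cdot)\ge 0$ whenever $a$ is above the critical threshold $R^2\phi(a/R)\le c'a$ of $\phi$; calibrating $a$ so that $\omega\ge v$ on the outer sphere via the Carleson bound gives the linear decay, with $\mu_1$ a bounded multiple of $\mu_1^\star$.

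Next, I would handle $u$ from below using the interior tangent ball $B(x^\star,\rho_0 R)\subset \Omega$ available at each such $z$. Here the crux is to pick $\mu_0\le u(A_R)$ just above the same critical threshold of $\phi$, i.e., with $R^2\phi(\mu_0/R)\le c\mu_0$. With this choice the radial candidate $\sigma(x)=\mu_0\,\psi_\alpha(|x-x^\star|)/\psi_\alpha(\rho_0 R/2)$, where $\psi_\alpha(r)=r^{-\alpha}-(\rho_0 R)^{-\alpha}$, satisfies $\mathcal{M}^-(D^2\sigma)\gtrsim \mu_0\alpha^2/R^2\ge \phi(|D\sigma|)$ for $\alpha$ large and so is a genuine subsolution of \eqref{thePDE}. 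The structural assumptions on $\phi$ recalled after Theorem \ref{mainthm} guarantee that such a $\mu_0$ can be taken within bounded Harnack-integral distance of $u(A_R)$, so that iterating \eqref{px new harnack} along a bounded chain of interior balls from $A_R$ to $\partial B(x^\star,\rho_0 R/2)$ gives $u\ge \mu_0$ on that sphere; comparison with $\sigma$ propagates the bound to $u(x)\ge c''\mu_0\, d(x,\partial\Omega)/R$ on $B_{R/C}\cap\Omega$.

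Dividing the two bounds gives $v/u\le (C_2/c'')\,\mu_1/\mu_0$; the excess constant is absorbed by replacing $\mu_1,\mu_0$ with a bounded multiple and fraction respectively, contributing only an $O(1)$ term to $\int_{\mu_0}^{\mu_1} dt/(R^2\phi(t/R)+t)$ since above the critical threshold the integrand is bounded by $1/t$. Summing with the Carleson contribution $C_1$ and the Harnack-chain contribution then produces the final $C$. The hard part will be the calibration of both $a$ and $\mu_0$ so that they lie on the correct side of the critical threshold of $\phi$ while remaining within $O(1)$ Harnack-integral distance of $u(A_R)$: the slowly-varying structural assumptions on $\phi$, giving $\phi(t)=\eta(t)\,t$ with $\eta$ slowly increasing, are precisely what reconciles these two demands and makes the constant $C$ independent of $u,v$ and $R$.
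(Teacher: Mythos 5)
There is a genuine gap, and it sits exactly where you flag ``the hard part'': the calibration of the barrier amplitudes against the critical threshold of $\phi$ is impossible in general, and in precisely those cases the theorem still makes a nontrivial claim. For the lower barrier you need simultaneously $\mu_0\leq u(A_R)$, $R^2\phi(\mu_0/R)\leq c\,\mu_0$ (so that the fixed power profile $\psi_\alpha$ is a genuine subsolution at amplitude $\mu_0$), and $\int_{\mu_0}^{u(A_R)}\frac{dt}{R^2\phi(t/R)+t}\leq C$. Take the model case $\phi(t)=(|\log t|+1)t$: the threshold forces $\mu_0\gtrsim R\,e^{-c/R}$, but nothing prevents $u(A_R)\ll R\,e^{-c/R}$, so no admissible $\mu_0$ exists; and since \cref{osgood1} holds for this $\phi$, the integral $\int_0^{u(A_R)}\frac{dt}{R^2\phi(t/R)+t}$ diverges, so you cannot fall back on $\mu_0=0$ either --- the theorem demands a positive $\mu_0$ at bounded integral distance, which your power barrier cannot produce because at that amplitude the drift is \emph{not} dominated by the second-order term. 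The slow variation of $\eta$ does not reconcile the two demands; it only controls $\eta$ multiplicatively, not the absolute level of $u(A_R)$. The upper barrier has the mirror-image problem: you need $a\gtrsim\sup v$ for comparison on the outer sphere, but when $\sup v$ exceeds the upper threshold (which happens exactly when the ratio $v/u$ is genuinely large --- see the sharpness example in the paper, where the ratio grows like $H^{\gamma-1}$), no bounded multiple of $\sup v$ keeps the power barrier a supersolution, so the claim ``$\mu_1$ a bounded multiple of $\mu_1^\star$'' fails. Your argument also never produces the degenerate values $\mu_0=0$, $\mu_1=\infty$ that the statement explicitly allows and that are needed when the Osgood conditions fail.

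The paper's fix, which is the key idea your plan is missing, is to abandon fixed-profile (linear or power) barriers and instead build radial barriers whose profile is adapted to the nonlinearity itself: the lower barrier has normal derivative $g$ solving $g'=\tilde C\,(R\phi(g)+g)$ with $g(0)=\mu_0$, and the upper one has derivative $f$ solving the decreasing analogue with $f(0)=\mu_1$. These are sub/supersolutions at \emph{every} level, with no threshold condition, because the drift is absorbed into the ODE for the profile. Convexity of the lower trace and concavity of the upper trace along the normal then give $u(te_N)\geq\mu_0 t$ and $v(te_N)\leq\mu_1 t$, hence the ratio bound $\mu_1/\mu_0$; the mean value theorem applied to $g$ and $f$, combined with \cref{weakHarnack} (for $\inf u$ near the corkscrew) and \Cref{mainthm} (for $\sup v$), yields $\int_{\mu_0}^{\mu_1}\frac{dt}{R^2\phi(t/R)+t}\leq C$. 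Finally, the boundary values $m_u$ and $M_v$ can only be matched if two quantitative integral conditions hold ($\int_0^{m_u/3}$ and $\int_{M_v}^{\infty}$ large); when they fail, the paper switches to $\mu_0=0$ or $\mu_1=\infty$, and exactly the failure of those conditions is what bounds the resulting integral. Your proposal shares the overall skeleton (linear bounds $\mu_0 t\leq u$, $v\leq\mu_1 t$ from barriers, plus Harnack/Carleson control of $\mu_0,\mu_1$), but without the $\phi$-adapted barrier profiles and the two-case analysis the construction breaks down in the very regimes the theorem is designed to cover.
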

In the homogeneous case \Cref{the boundary Harnack} reduces to the classical boundary Harnack inequality, i.e., the ratio $v/u$ is bounded by a uniform constant. If $\phi$ satisfies the Osgood conditions \cref{osgood2} and \cref{osgood1} then \Cref{the boundary Harnack} implies that the ratio $v/u$ is bounded. In the general case when $\phi$ does not satisfy \cref{osgood2} and \cref{osgood1} the ratio $v/u$ can be unbounded. Note that we allow $\mu_0 = 0$ and $\mu_1 = \infty$. In this case, arguing as in the case of \Cref{mainthm}, we may still conclude that if $u(A_R)=1$ then the ratio $v/u$ is bounded when the radius $R$ is small enough. At the end of the paper we give an example which shows that in the model case $\phi(t)= (|\log t|+1)t$ \Cref{the boundary Harnack} is essentially sharp. 

\subsection{Consequences for the theory of the $p(x)$-Laplacian} Consider the $p(x)$-Laplace equation 
\begin{equation}
	\label{px}- \text{div} (|\nabla u|^{p(x)-2} \nabla u) = 0, \quad 1 < p(x) < \infty\,. 
\end{equation}
Let us make the assumption that $p(\cdot)$ is continuously differentiable. In non-divergence form this equation is of the form \cref{thePDE} and has a drift term which satisfies \cref{non-homogeneity} with $\phi(t)= C(|\log t|+1)t$ (see \cite{Vesku, JLP}). Solutions of \cref{px} are called $p(x)$-harmonic functions. 

Let us return to the previous outline of the proof of the Carleson estimate. In \cite{Alk, HKL, Wo} it was proved that a non-negative $p(x)$-harmonic function $u$ in $B_{2R}$ satisfies the following Harnack type estimate 
\begin{equation*}
	\sup_{B_R} u \leq C (\inf_{B_R}u+R) 
\end{equation*}
for a constant $C$ depending on the solution $u$. In \cite{AdLu}, Adamowicz and Lundstr\"om used the above estimate to prove a version of \cref{lapcarl} with a constant depending on the solution. From our perspective \Cref{mainthm} provides an improvement over this. Specifically, calculating the integral in \Cref{mainthm} in the context of the $p(x)$-Laplacian we obtain the following corollary. 
\begin{corollary}
	Let $\Omega$ be as in \Cref{mainthm} and $p \in C^1(\rn)$ such that $1 < p_- \leq p(x) \leq p_+ < \infty$. Assume that $u \in C(B_{4R}\cap \overline{\Omega})$, with $R \in (0,R_0/4]$, is a non-negative $p(x)$-harmonic function. Let $A_R \in B_{R/2L} \cap \Omega $ be a point such that $d(A_R,\partial \Omega) > R/(4L^2)$, and assume that $u = 0$ on $\partial \Omega \cap B_{4 R}$. There exists a constant $C(N,p_-,p_+,\|p\|_{C^1},L) > 1$ which is independent of $u$ and $R$ such that 
	\begin{equation}
		\nonumber \sup_{B_{R/C} \cap \Omega} u \leq C \max \left \{ u(A_R)^{1+C R},u(A_R)^{\frac{1}{1+CR}} \right \}\,. 
	\end{equation}
\end{corollary}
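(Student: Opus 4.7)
The plan is to specialize \Cref{mainthm} directly by substituting the explicit $\phi$ that corresponds to the $p(x)$-Laplacian. As recalled in the paragraph preceding the corollary, when $p\in C^1(\rn)$ with $1<p_-\leq p\leq p_+<\infty$, equation \cref{px} can be rewritten in the form \cref{thePDE} with drift satisfying \cref{non-homogeneity} for $\phi(s)=C_0(|\log s|+1)s$, where $C_0$ depends only on $N,p_-,p_+,\|p\|_{C^1}$. Hence \Cref{mainthm} applies and, using $R^{2}\phi(t/R)=C_0 R\,t\,(|\log(t/R)|+1)$, yields
\begin{equation*}
\int_{u(A_R)}^{M}\frac{dt}{t\bigl[\,1+C_0 R\,(\,|\log(t/R)|+1\,)\bigr]}\leq C,
\end{equation*}
where $M=\sup_{B_{R/C}\cap\Omega}u$.

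Next I would evaluate the integral explicitly by the substitution $\tau=\log(t/R)$, $d\tau=dt/t$. Setting $\alpha:=C_0R$, $\tau_0:=\log(u(A_R)/R)$ and $\tau_1:=\log(M/R)$, the bound becomes
\begin{equation*}
\int_{\tau_0}^{\tau_1}\frac{d\tau}{(1+\alpha)+\alpha|\tau|}\leq C,
\end{equation*}
and the antiderivative on each of the half-lines $\{\tau\geq 0\}$, $\{\tau<0\}$ is a constant multiple of $\log\bigl((1+\alpha)+\alpha|\tau|\bigr)$. Exponentiating and splitting into cases according to the signs of $\tau_0$ and $\tau_1$ yields
\begin{equation*}
\tau_1\leq e^{\alpha C}\tau_0+\tfrac{1+\alpha}{\alpha}(e^{\alpha C}-1)\quad\text{if }\tau_0\geq 0,
\end{equation*}
and
\begin{equation*}
\tau_1\leq e^{-\alpha C}\tau_0+\tfrac{1+\alpha}{\alpha}(1-e^{-\alpha C})\quad\text{if }\tau_1\leq 0,
\end{equation*}
while in the mixed case $\tau_0<0<\tau_1$ both $|\tau_0|$ and $\tau_1$ are a priori bounded by $\tfrac{1+\alpha}{\alpha}(e^{\alpha C}-1)$.

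Finally, since $\alpha=C_0 R$ with $R\in(0,R_0/4]$, convexity of $e^x$ on a bounded interval gives $e^{\alpha C}\leq 1+C'R$ and $e^{-\alpha C}\geq (1+C'R)^{-1}$ for some $C'=C'(C,C_0,R_0)$, while $\tfrac{1+\alpha}{\alpha}(e^{\pm\alpha C}-1)$ stays uniformly bounded because $(e^{\alpha C}-1)/\alpha$ is bounded on $[0,C_0R_0/4]$. Writing $A:=1+C'R$, the three cases collapse into $\tau_1\leq\max\{A\tau_0,A^{-1}\tau_0\}+C''$. Translating back via $\tau_i=\log(\cdot/R)$, the residual factors $R^{1-A}$ and $R^{1-A^{-1}}$ are bounded uniformly on $(0,R_0/4]$ because $R|\log R|$ is bounded there, so they absorb into the constant and produce $M\leq C\max\{u(A_R)^{A},u(A_R)^{A^{-1}}\}$, which is the desired estimate after replacing the two constants by their maximum. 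The only real obstacle is the bookkeeping: one must verify that the corrections $(e^{CC_0 R}-1)/R$ and $R|\log R|$ arising from the change of variables remain bounded uniformly in $R\in(0,R_0/4]$, so that the final constant depends only on $N,p_-,p_+,\|p\|_{C^1}$ and $L$.
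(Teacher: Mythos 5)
Your computation is correct and is exactly the route the paper intends: the paper gives no detailed proof of this corollary, stating only that it follows by "calculating the integral in \Cref{mainthm}" with $\phi(t)=C(|\log t|+1)t$, which is precisely what you carry out (substitution $\tau=\log(t/R)$, case analysis on the sign of $\tau$, and absorption of the uniformly bounded factors $(e^{CC_0R}-1)/R$ and $R^{\pm C'R}$ into the constant). No gaps; the case bookkeeping and the uniform bounds in $R\in(0,R_0/4]$ are handled correctly.
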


Let us now turn our attention to \Cref{the boundary Harnack}. An immediate corollary for $p(x)$-harmonic functions is. 
\begin{corollary}
	\label{p(x) bHp} Assume that $\Omega$ is $C^{1,1}$-regular domain such that $0 \in \partial \Omega$. Let $u,v \in C(B_{4R} \cap \overline{\Omega})$, with $R \in (0,R_0/4]$, be two positive $p(x)$-harmonic functions. Let $A_R \in B_{R/2L} \cap \Omega $ be such that $d(A_R,\partial \Omega) > R/(4L^2)$, and assume that $v(A_R)= u(A_R)>0$ and $v = u = 0$ on $\partial \Omega \cap B_{4R}$. There exists a constant $C(N,p_-,p_+,\|p\|_{C^1},L) > 1$ which is independent of $u,v$ and $R$ such that
	\[ \sup_{x \in B_{R/C} \cap \Omega} \frac{v(x)}{u(x)} \leq C \max \left \{ u(A_R)^{CR}, u(A_R)^{-CR} \right \} . \]
\end{corollary}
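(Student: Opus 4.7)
The plan is to deduce the corollary from \Cref{the boundary Harnack} applied to the $p(x)$-Laplacian, and then to evaluate the integral bound explicitly for the resulting $\phi$.

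First, I would recall from \cite{Vesku,JLP} that after dividing by $|\nabla u|^{p(x)-2}$, the $p(x)$-Laplace equation can be written in the non-divergence form \cref{thePDE} with an operator that is elliptic \cref{ellipticity} with constants depending only on $N,p_-,p_+$, and whose drift satisfies \cref{non-homogeneity} with $\phi(t)=C_0(|\log t|+1)t$, where $C_0=C_0(N,p_-,p_+,\|p\|_{C^1})$. The hypotheses of \Cref{the boundary Harnack} are thereby met, producing $\mu_0,\mu_1$ with $\mu_0\le u(A_R)\le\mu_1$ for which
\[
\sup_{B_{R/C}\cap\Omega}\frac{v}{u}\le\frac{\mu_1}{\mu_0},\qquad \int_{\mu_0}^{\mu_1}\frac{dt}{R^2\phi(t/R)+t}\le C.
\]

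Second, I would compute the integral. For this $\phi$ the denominator is exactly $t\bigl(1+C_0 R+C_0 R|\log(t/R)|\bigr)$, so the change of variables $s=\log(t/R)$ converts the bound into
\[
\int_{\log(\mu_0/R)}^{\log(\mu_1/R)}\frac{ds}{1+C_0R+C_0R|s|}\le C.
\]
Splitting at $s=0$ when necessary, this integral evaluates in closed form to a (sum of) logarithmic expression(s) in $1+C_0R(1+|\log(\mu_i/R)|)$. Using that $R\le R_0/4$ is bounded, that $R|\log R|$ is bounded on $(0,4]$, and that $\mu_0\le u(A_R)\le\mu_1$ controls $\log(\mu_i/R)$ by $\log u(A_R)$ up to a universal additive constant, the bound rearranges to
\[
\log(\mu_1/\mu_0)\le C+CR\,|\log u(A_R)|.
\]
Exponentiating and using $e^{|x|}=\max\{e^x,e^{-x}\}$ yields $\mu_1/\mu_0\le C\max\{u(A_R)^{CR},u(A_R)^{-CR}\}$, which combined with the first display completes the proof.

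The only non-trivial work lies in the case analysis dictated by whether $\mu_0,u(A_R),\mu_1$ lie above or below $R$, since the denominator is piecewise affine in $s$. Because it is even in $s$, however, the three sub-cases (both endpoints above $0$, both below, or straddling $0$) are symmetric, and all intermediate constants can be absorbed into the final $C=C(N,p_-,p_+,\|p\|_{C^1},L)$. I do not expect any conceptual obstacle beyond this routine bookkeeping.
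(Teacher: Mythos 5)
Your proposal is correct and is exactly the derivation the paper intends (the corollary is stated as an immediate consequence of \Cref{the boundary Harnack} once one writes the $p(x)$-Laplacian in non-divergence form with $\phi(t)=C(|\log t|+1)t$): apply the boundary Harnack theorem and evaluate the integral via $s=\log(t/R)$, absorbing $R|\log R|$ and the bounded range of $R$ into the constant. The only point worth stating explicitly is that for this $\phi$ both Osgood conditions \cref{osgood2} and \cref{osgood1} hold, so the bound $\int_{\mu_0}^{\mu_1}\frac{dt}{R^2\phi(t/R)+t}\leq C$ together with $0<u(A_R)<\infty$ and $\mu_0\leq u(A_R)\leq\mu_1$ forces $0<\mu_0\leq\mu_1<\infty$, which legitimizes the rearrangement with finite logarithms.
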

The above corollary is similar to the boundary Harnack inequality proved in \cite{AdLu}, but in \Cref{p(x) bHp} the constants does not depend on the solution. As we already mentioned we provide an example in \Cref{secbhi} that shows that \Cref{p(x) bHp} is essentially sharp.

\subsection{Organization of the paper}

In \Cref{secprel} we list all the assumptions on $\phi$ in \cref{non-homogeneity} and recall the definition of a Reifenberg flat domain. In \Cref{seccontblow} we prove the sharp H\"older continuity estimate up to the boundary in Reifenberg flat domains (\Cref{bdry holder cont}). By this we mean that we give the sharp H\"older norm in terms of the maximum of the solution. In \Cref{ssecblow} we study the blow-up rate of a solution near the boundary in NTA-domains (\Cref{blow up 2}). These results are crucial in the proof of the Carleson estimate but are of independent interest. In \Cref{secproof} we give the proof of the Carleson estimate (\Cref{mainthm}). In \Cref{secbhi} we prove the boundary Harnack estimate (\Cref{the boundary Harnack}). 

\section*{Acknowledgment} The first author was supported by the Swedish Research Council, dnr: 637-2014-6822. The second author was supported by the Academy of Finland grant 268393. 

\section{Preliminaries} \label{secprel}

Throughout the paper $B(x,r)$ denotes the open ball centered at $x$ with radius $r$. When the ball is centered at the origin we simply write $B_r$. Given a point $x \in \rn$ and a set $E \subset \rn$ we denote their distance by $d(x,E):= \inf_{y \in E}|x-y|$.

We recall the definition of a viscosity solution. 
\begin{definition}
	\label{visco_def} We call a lower semicontinuous function $u: \Omega \to \er$ a \emph{viscosity supersolution} of \cref{thePDE} in $\Omega$ if the following holds: if $x_ 0 \in \Omega$ and $\varphi \in C^2(\Omega)$ are such that $u- \varphi$ has a local minimum at $x_0$ then
	\[ F(D^2\varphi(x_0), D\varphi(x_0),x_0) \geq 0. \]
	An upper semicontinuous function $u: \Omega \to \er$ is a viscosity subsolution of \cref{thePDE} in $\Omega$ if the following holds: if $x_ 0 \in \Omega$ and $\varphi \in C^2(\Omega)$ are such that $u- \varphi$ has a local maximum at $x_0$ then
	\[ F(D^2\varphi(x_0), D\varphi(x_0),x_0) \leq 0. \]
	Finally a continuous function is a viscosity solution if it is both a super- and a subsolution. 
\end{definition}

As mentioned in the introduction we assume that $F$ in \cref{thePDE} has a drift term which satisfies the growth condition 
\begin{equation*}
	|F(0, p, x) \big| \leq \phi(|p|) 
\end{equation*}
for every $(x,p) \in \rn \times \rn$, where $\phi: [0,\infty) \to [0,\infty)$ is continuous, increasing, and satisfies the following structural conditions from \cite{Vesku}. For $t > 0$ we write $\phi(t)$ as 
\begin{equation}
	\nonumber \phi(t) = \eta(t)\, t 
\end{equation}
and assume the following. 
\begin{enumerate}
	[label=(P\arabic*)] 
	\item \label{P1} $\phi: [0, \infty) \to [0, \infty)$ is locally Lipschitz continuous in $(0, \infty)$ and $\phi(t)\geq t$ for every $t \geq 0$. Moreover, $\eta: (0, \infty) \to [1,\infty)$ is non-increasing on $(0,1)$ and nondecreasing on $ [1, \infty) $. 
	\item \label{P2} $\eta$ satisfies 
	\begin{equation*}
		\lim_{t \to \infty} \frac{t \eta'(t)}{\eta(t)} \log(\eta(t)) = 0. 
	\end{equation*}
	\item \label{P3} There is a constant $\Lambda_0$ such that 
	\begin{equation*}
		\eta(st) \leq \Lambda_0 \eta(s) \eta(t) 
	\end{equation*}
	for every $s,t \in (0, \infty)$. 
\end{enumerate}
The assumption \cref{P2} implies that $\eta$ is a slowly increasing function \cite{BGT}. We will repeatedly use the fact that for every $\eps>0$ there is a constant $C_\eps$ such that $\eta(t) \leq C_\eps t^{\eps}$ for every $t \geq 1$, see again \cite{BGT}. We explicitly note that our assumptions \cref{P1,P2,P3} do not rule out the possibility that $\phi(0)>0$, that $\phi$ is non-Lipschitz at $0$, and that the maximum/comparison-principle does not hold. Moreover the assumptions \cref{P1,P2,P3} do not imply that $\phi$ satisfies the Osgood conditions \cref{osgood2} and \cref{osgood1}.

We may replace the equation \cref{thePDE} by two inequalities which follow from the ellipticity condition and the modulus of continuity of the drift term \cref{non-homogeneity}. In other words if $u$ is a solution of \cref{thePDE} then it is a viscosity supersolution of 
\begin{equation}
	\label{model1} \mathcal{P}_{\lambda, \Lambda}^+(D^2 u) = -\phi(|Du|) 
\end{equation}
and a viscosity subsolution of 
\begin{equation}
	\label{model2} \mathcal{P}_{\lambda, \Lambda}^-(D^2 u) = \phi(|Du|) 
\end{equation}
in $\Omega$. Here $\mathcal{P}_{\lambda, \Lambda}^-, \mathcal{P}_{\lambda, \Lambda}^+$ are the usual Pucci operators, which are defined for a symmetric matrix $X \in \mathbb{S}^{n \times n}$ with eigenvalues $e_1, e_2, \dots, e_n$ as
\[ \mathcal{P}_{\lambda, \Lambda}^+(X):= -\lambda \sum_{e_i \geq 0} e_i - \Lambda \sum_{e_i<0} e_i \qquad \text{and} \qquad \mathcal{P}_{\lambda, \Lambda}^-(X):= -\Lambda \sum_{e_i \geq 0} e_i - \lambda \sum_{e_i<0} e_i . \]
We note that all the results of this paper hold if we instead of assuming that $u$ is a solution of \cref{thePDE} we only assume that it is a supersolution of \cref{model1} and a subsolution of \cref{model2}. We recall the result from \cite{Vesku}. 
\begin{theorem}
	\label{weakHarnack} Assume that $u\in C(B(x_0,2r))$, with $r \leq 1$, is a non-negative solution of \cref{thePDE}. Denote $m:= \inf_{B(x_0,r)}u$ and $M:= \sup_{B(x_0,r)}u$. There is a constant $C$ which is independent of $u$ and $r$ such that 
	\begin{equation*}
		\int_{m}^{M} \frac{dt}{r^2\phi(t/r)+ t} \leq C. 
	\end{equation*}
\end{theorem}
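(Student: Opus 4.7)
My plan is to adapt the Krylov--Safonov Harnack machinery to the non-homogeneous setting via a rescaling that makes the structural role of the integrand transparent. First I would rescale to the unit ball: setting $v(y) := u(x_0 + ry)$ on $B_2$, the Pucci inequalities \cref{model1,model2} transform into $\mathcal{P}^+_{\lambda,\Lambda}(D^2 v) \geq -\tilde\phi(|Dv|)$ and $\mathcal{P}^-_{\lambda,\Lambda}(D^2 v) \leq \tilde\phi(|Dv|)$ with rescaled drift $\tilde\phi(p) := r^2 \phi(p/r)$, which inherits \cref{P1,P2,P3} with the same structural parameters. The theorem thus reduces to the scale-invariant statement
\[ \int_{\inf_{B_1} v}^{\sup_{B_1} v} \frac{dt}{\tilde\phi(t)+t} \leq C, \]
where the denominator $r^2\phi(t/r)+t$ is now visibly $\tilde\phi(t)+t$.

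Next I would establish a one-step estimate of the form
\[ \sup_{B_{1/2}} v \leq C \bigl( \inf_{B_{1/2}} v + \tilde\phi(\sup_{B_1} v)\bigr). \]
This combines an ABP-type inequality applied against $\tilde\phi(|Dv|)$, which yields a weak Harnack and a local maximum principle of Krylov--Safonov type with an additive error controlled by $\tilde\phi$, together with the interior Lipschitz estimate for Pucci-type equations that lets me replace $\|Dv\|_{L^\infty(B_{3/4})}$ by $\sup_{B_1} v$ inside the argument of $\tilde\phi$. Condition \cref{P2} (slow variation of $\eta$) ensures that the multiplicative constants accumulated when passing from the gradient bound to $\sup v$ are absorbed without changing the effective profile.

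The one-step estimate is then iterated on dyadic level sets. Set $m := \inf_{B_r} u$ and $M := \sup_{B_r} u$, fix a large $\theta > 1$ depending only on $n,\lambda,\Lambda,\Lambda_0$, and build a geometric chain $s_0 := m$, $s_{k+1} := \theta s_k$, stopping at the first $N$ with $s_N \geq M$. A Harnack-chain/covering argument permits applying the one-step estimate to the truncated function $(v - s_k)_+$ on a suitable smaller ball at each step, producing $s_{k+1} - s_k \leq C\bigl(\tilde\phi(s_{k+1}) + s_k\bigr)$, or equivalently
\[ \int_{s_k}^{s_{k+1}} \frac{dt}{\tilde\phi(t)+t} \leq C' \]
uniformly in $k$. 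Summing over $k = 0, \dots, N-1$ then yields the desired bound, since each dyadic increment contributes at least a fixed positive amount to the target integral and $N$ is therefore forced to be bounded.

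The main obstacle, I expect, is the one-step estimate in the drift-dominated regime $\tilde\phi(M) \gg M$: there the gradient of $v$ is a priori unbounded, and the right-hand side $\tilde\phi(|Dv|)$ of the Pucci inequality is not directly controlled by classical ABP. The remedy is a critical-scale argument, namely identifying a threshold level $t_\ast$ below which the drift dominates and above which the equation is effectively homogeneous; the submultiplicativity of $\eta$ in \cref{P3} keeps the iteration stable across this threshold, and the slow variation encoded in \cref{P2} ensures the transition costs no more than a uniform constant. This balancing is precisely why the correct Harnack statement in our setting must be an \emph{integral} inequality rather than a pointwise $\sup \leq C \inf$ bound: the integral absorbs the passage through $t_\ast$ in a scale-invariant way that a pointwise estimate cannot.
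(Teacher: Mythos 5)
You should first note that this paper never proves \Cref{weakHarnack}: it is recalled verbatim from \cite{Vesku}, and the only in-paper material near it is the rescaling discussion of \Cref{ssec reduction} and \Cref{corHarnack}. Measured against that source and on its own terms, your sketch has a genuine gap at its core, namely the iteration. Once you set $s_{k+1}=\theta s_k$, the per-step bound $\int_{s_k}^{s_{k+1}}\frac{dt}{\tilde\phi(t)+t}\leq\log\theta$ is true by inspection (the integrand is at most $1/t$), so it encodes no information from the equation; and summing it gives $N\log\theta$ with $N\approx\log_\theta(M/m)$, which is not bounded and in general cannot be bounded: when $\phi$ fails the Osgood conditions \cref{osgood1}, \cref{osgood2} the ratio $M/m$ may be arbitrarily large (even with $M=\infty$ or $m=0$) while the weighted integral stays finite. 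Your closing claim that ``each dyadic increment contributes at least a fixed positive amount to the target integral'' is exactly what fails in the drift-dominated ranges, where an increment contributes arbitrarily little -- that is the whole reason the statement is an integral inequality. So counting multiplicative level steps at a fixed scale cannot produce the theorem; the PDE must bound the integral (equivalently the oscillation of $G(u)$ with $G'=1/(\tilde\phi+\operatorname{id})$) directly, which is what Julin's actual proof does via Krylov--Safonov growth lemmas and barriers adapted to $\phi$ through \cref{P1,P2,P3}.

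There are secondary problems feeding into this. The rescaled drift $\tilde\phi(t)=r^2\phi(t/r)$ does \emph{not} inherit \cref{P1,P2,P3} with the same parameters: $\tilde\eta(t)=r\,\eta(t/r)$ violates $\tilde\eta\geq 1$ and $\tilde\phi(t)\geq t$ for small $r$, and the monotonicity breakpoint moves to $t=r$; the paper itself stresses this in \Cref{ssec reduction} (``Since $\phi_R$ does not satisfy (P1)--(P3) \dots we cannot use it directly'') and compensates with \cref{P3'} and \Cref{corHarnack}, so uniformity of the constant in $r$ is something to be proved, not a consequence of scale invariance. The proposed one-step estimate $\sup_{B_{1/2}}v\leq C\bigl(\inf_{B_{1/2}}v+\tilde\phi(\sup_{B_1}v)\bigr)$ is also too weak to iterate, because the error is evaluated at the outer supremum with no smallness factor -- compare \Cref{holder cont}, where the analogous error carries the factor $\sqrt{\rho}$ precisely so the iteration closes -- and its derivation via ABP plus an interior Lipschitz bound glosses over the facts, emphasized in the paper, that $\phi$ may be non-Lipschitz at $0$ and that no comparison principle is available. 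Finally, the picture of a single threshold ``below which the drift dominates'' is off: since $\eta$ is nondecreasing on $[1,\infty)$ and non-increasing on $(0,1)$, the correction $r\eta(t/r)t$ can dominate $t$ both at very large and at very small levels, which is another reason the level-counting scheme does not localize the difficulty the way you hope.
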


To describe the kind of domains we will be considering we first recall the definition of Reifenberg flat domains.
\begin{definition}
	\label{def:hyperplane:approximable} Let $ \Omega \subset \rn $ be a bounded domain. Then $\partial \Omega $ is said to be uniformly $ ( \delta, r_0 )$-approximable by hyperplanes, provided there exists, whenever $ w \in \partial \Omega$ and $ 0 < r < r_0, $ a hyperplane $ \pi $ containing $w$ such that 
	\begin{equation*}
		h ( \partial \Omega \cap B(w,r), \pi \cap B(w,r) ) \leq \delta r\,. 
	\end{equation*}
	Here $h ( E, F ) = \max ( \sup \{ d ( y, E ) : y \in F \}, \sup \{ d ( y, F ) : y \in E \} )$ is the Hausdorff distance between the sets $ E, F \subset \rn$ . 
\end{definition}
We denote by $ {\mathcal F} ( \delta, r_0 ) $ the class of all domains $ \Omega$ which satisfy \Cref{def:hyperplane:approximable}. Let $ \Omega \in {\mathcal F} ( \delta, r_0 )$, $w\in \partial\Omega$, $0<r<r_0,$ and let $\pi$ be as in \Cref{def:hyperplane:approximable}. We say that $\partial\Omega$ separates $ B_r(w), $ if 
\begin{equation}
	\label{eqn:separating} \{ x \in \Omega \cap B(w,r) : d ( x, \partial \Omega ) \geq 2 \delta r \} \subset \mbox{ one component of } \rn \setminus \pi. 
\end{equation}
\begin{definition}
	\label{def:riefenberg:flat} Let $ \Omega\subset\rn$ be a bounded domain. Then $ \Omega$ and $ \partial \Omega $ are said to be $ (\delta, r_0 )$-Reifenberg flat provided $ \Omega \in { \mathcal F} ( \delta, r_0 ) $, $\delta < 1/8$ and provided \cref{eqn:separating} holds whenever $ 0 < r < r_0, w \in \partial \Omega.$ 
\end{definition}
For short we say that $ \Omega $ and $\partial\Omega$ are $ \delta $-Reifenberg flat whenever $ \Omega $ and $\partial\Omega$ are $ ( \delta, r_0 ) $-Reifenberg flat for some $ r_0 > 0. $ We note that an equivalent definition of Reifenberg flat domains is given in \cite{KT}.

Next we recall the following definition of NTA-domains.
\begin{definition}
	\label{def:NTA} A bounded domain $\Omega$ is called non-tangentially accessible \textbf{(NTA)} if there exist $L \geq 2$ and $r_0$ such that the following are fulfilled: 
	\begin{enumerate}
		[label=(\roman*)] 
		\item \label{NTA1} \textbf{corkscrew condition:} for any $ w\in 
		\partial\Omega, 0<r<r_0,$ there exists a point $a_r(w) \in \Omega $ such that 
		\begin{equation}
			\nonumber L^{-1}r<|a_r(w)-w|<r, \quad d(a_r(w), 
			\partial\Omega)>L^{-1}r, 
		\end{equation}
		\item \label{NTA2} $\rn \setminus \Omega$ satisfies \cref{NTA1}, 
		\item \label{NTA3} \textbf{uniform condition:} if $ w \in 
		\partial \Omega, 0 < r < r_0, $ and $ w_1, w_2 \in B ( w, r) \cap \Omega, $ then there exists a rectifiable curve $ \gamma: [0, 1] \to \Omega $ with $ \gamma ( 0 ) = w_1,\, \gamma ( 1 ) = w_2, $ such that 
		\begin{enumerate}
			\item $H^1 ( \gamma ) \, \leq \, L \, | w_1 - w_2 |,$ 
			\item $\min\{H^1(\gamma([0,t])), \, H^1(\gamma([t,1]))\, \}\, \leq \, L \, d ( \gamma(t), 
			\partial \Omega)$, for all $t \in [0,1]$. 
		\end{enumerate}
	\end{enumerate}
\end{definition}

In \Cref{def:NTA}, $ H^1 $ denotes length or the one-dimensional Hausdorff measure. We note that \cref{NTA3} is different but equivalent to the usual Harnack chain condition given in \cite{JK} (see \cite{BL}, Lemma 2.5). Moreover, using \cite[Theorem 3.1]{KT} we see that there exists $\hat\delta=\hat\delta(N)>0$ such that if $\Omega\subset\rn$ is a $(\delta,r_0)$-Reifenberg flat domain and if $0<\delta\leq\hat\delta$, then $\Omega$ is an NTA-domain in the sense described above with constant $L=L(N)$. In the following we assume $0<\delta\leq\hat\delta$ and we refer to $L$ as the NTA constant of $\Omega$. 
\begin{remark}
	\label{SmallLip} Let $\Omega$ be a Lipschitz domain with constant $ l < 1/8$ then $\Omega$ is $\delta$-Reifenberg flat with constant 
	\begin{equation}
		\nonumber \delta = \sin(\arctan(l)) = \frac{l}{\sqrt{l^2+1}}\,. 
	\end{equation}
	Moreover note that $\delta < l$ and that any Lipschitz domain is also an NTA-domain. 
\end{remark}

\subsection{Reduction argument} \label{ssec reduction}

\subsubsection*{Reduction to small Lipschitz constant} First we observe that we may assume that the domain $\Omega$ in \Cref{mainthm} is Reifenberg flat with small $\delta$. Indeed assume $\Omega$ is an $l$-Lipschitz domain, and the equation \cref{thePDE} has ellipticity constants $\lambda$ and $\Lambda$. We may stretch the domain by a linear map $\mathcal{T}$ such that $\Omega'= \mathcal{T}(\Omega)$ is an $\hat{l}$-Lipschitz domain with $\hat{l} < \frac{1}{100}$. Moreover, if $u$ is a solution of \cref{thePDE} in $\Omega$ then $v(x) = u(\mathcal{T}^{-1}(x))$ is a solution of a similar equation with ellipticity constants $\tilde{\lambda}$ and $\tilde{\Lambda}$. Thus we may consider the case when $\Omega$ an $l$-Lipschitz domain, with $l \leq 1/100$. In particular, by \Cref{SmallLip} we may assume that $\Omega$ is Reifenberg flat with constant $1/100$.

\subsubsection*{Reduction to a canonical scale} In the proof of \Cref{mainthm} we prefer to scale the radius $R$ to one and $R_0 = 16$. In this way we do not get confused by the many radii which appear in the proof. Let us assume that $u$ is as in the theorem. By rescaling $u_R(x) := \frac{u(Rx)}{R}$ we obtain a function $u_R$ which is a solution to the equation 
\begin{equation}
	\label{rescaledPDE} F_R(D^2 u_R, D u_R,x)=0 
\end{equation}
where 
\newcommand{\phiRa}{\phi_R} 
\newcommand{\phiRb}{\Phi_R} 
\begin{equation}
	\label{rescaled-non-homogeneity} | F_R(0,p,\cdot)| \leq \phiRa(|p|) := R \phi(|p|). 
\end{equation}
Note that \cref{rescaledPDE} is of type \cref{thePDE}, satisfying \cref{ellipticity} with the same constants and with nonlinearity $\phiRa$. 

Since $\phiRa$ does not satisfy \cref{P1,P2,P3} we need to rephrase \Cref{weakHarnack} in our new scale as we cannot use it directly for $u_R$ (see \Cref{corHarnack}). With this in mind we prove the H\"older regularity estimates (\Cref{holder cont} and \Cref{bdry holder cont}) and the blow-up estimate (\Cref{blow up 2}) assuming that we have a solution of \cref{rescaledPDE,rescaled-non-homogeneity}. 

The simplifying point is that if we denote 
\begin{equation}
	\label{phi R} \phiRb(t) := \phiRa(t) + t \geq \phiRa\,, 
\end{equation}
then we see that $\phiRb$ satisfies \cref{P1,P2} with $\eta_R(t) = R\eta(t)+1$ and instead of \cref{P3} it satisfies 
\begin{enumerate}
	[label=(P3')] 
	\item \label{P3'} 
	\begin{equation*}
		\eta_R(st) \leq \Lambda_0 \eta(s) \eta_R(t), \qquad \text{for every $s,t \in (0, \infty)$,} 
	\end{equation*}
\end{enumerate}
with the $\Lambda_0$ from \cref{P3} for $\phi$. Rephrasing \Cref{mainthm} in terms of $u_R$ we see that if we denote $M_R = \sup_{B_1 \cap \Omega} u_R$ and $m_R = \frac{u(A_R)}{R}$, then \Cref{mainthm} becomes 
\begin{equation}
	\label{mainthmrescaled} \int_{m_R}^{M_R} \frac{dt}{\phiRb(t)} \leq C \,. 
\end{equation}
Thus our aim will be to prove that for a solution of \cref{rescaledPDE,rescaled-non-homogeneity}, \cref{mainthmrescaled} holds.
\begin{corollary}
	\label{corHarnack} Assume that $u\in C(B(x_0,2r))$, is a non-negative solution of \cref{rescaledPDE,rescaled-non-homogeneity}. Denote $m:= \inf_{B(x_0,r)}u$ and $M:= \sup_{B(x_0,r)}u$. Let $\alpha_0 \in (0,1)$, then there exists a constant $C(\alpha_0) > 1$ which is independent of $u$, $r$ and $R$ such that 
	\begin{equation*}
		\int_{m}^{M} \frac{dt}{ r^{\alpha} \phiRb(t)+ t} \leq C, \quad \forall \alpha \in [0,\alpha_0]\,. 
	\end{equation*}
\end{corollary}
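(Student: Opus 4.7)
The plan is to undo the rescaling of \Cref{ssec reduction} and invoke \Cref{weakHarnack} on the resulting non-rescaled solution. Setting $\tilde u(y):=R\,u(y/R)$ on $B(Rx_0,2Rr)$, the $1$-homogeneity of $F$ in its first argument implies that $\tilde u$ is a non-negative viscosity solution of the original equation \cref{thePDE} with drift $\phi$ on $B(Rx_0,2Rr)$, with $\inf_{B(Rx_0,Rr)}\tilde u=Rm$ and $\sup_{B(Rx_0,Rr)}\tilde u=RM$. If $Rr\leq 1$, \Cref{weakHarnack} applied to $\tilde u$ followed by the substitution $t=Rs$ yields
\[
\int_m^M \frac{ds}{Rr^2\,\phi(s/r)+s}\leq C.
\]
The case $Rr>1$, which is only relevant when $r$ lies in a bounded range in the paper's applications, is handled by covering $B(Rx_0,Rr)$ with a controlled number of balls of radius $\leq 1$ and chaining \Cref{weakHarnack} via the standard Harnack chain property of Euclidean balls.

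It then suffices to establish the pointwise comparison
\[
Rr^2\,\phi(s/r)+s \;\leq\; C(\alpha_0)\bigl(r^\alpha\Phi_R(s)+s\bigr)\qquad\text{for all }s>0,\ \alpha\in[0,\alpha_0],
\]
since combining it with the previous display delivers the desired estimate. Writing $\phi=\eta\cdot\mathrm{id}$ and using $\Phi_R(s)\geq R\phi(s)$, the above reduces, after discarding the benign $s$-terms, to the bound $r^{1-\alpha}\eta(s/r)\leq C(\alpha_0)\,\eta(s)$. Invoking \cref{P3} in the form $\eta(s/r)\leq\Lambda_0\,\eta(s)\,\eta(1/r)$, the problem collapses to the $s$-free inequality
\[
r^{1-\alpha}\,\eta(1/r)\leq C(\alpha_0)/\Lambda_0.
\]

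For $r\leq 1$, the slow variation of $\eta$ at infinity (a consequence of \cref{P2}) provides, for every $\epsilon>0$, a constant $C_\epsilon$ with $\eta(t)\leq C_\epsilon t^\epsilon$ for $t\geq 1$; choosing $\epsilon=1-\alpha_0$ yields $r^{1-\alpha}\eta(1/r)\leq C_{1-\alpha_0}\,r^{\alpha_0-\alpha}\leq C_{1-\alpha_0}$ on $\alpha\leq\alpha_0$. For $r\geq 1$ one instead falls back on the trivial lower bound $r^\alpha\Phi_R(t)+t\geq 2t$ and a Harnack-chain control of $\log(M/m)$ to close the argument. \emph{The main obstacle} is precisely this pointwise comparison in the regime of small $r$: both $\eta(s/r)$ and $\eta(s)$ can blow up, and only the slow variation of $\eta$ at infinity lets one trade the $r^2$ scaling produced by \Cref{weakHarnack} for the weaker $r^\alpha$ factor of the corollary, at the price of a constant $C(\alpha_0)$ that degenerates as $\alpha_0\uparrow 1$.
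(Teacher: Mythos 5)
Your main argument is exactly the paper's proof: rescale back via $v(y)=Ru(y/R)$, apply \Cref{weakHarnack} at radius $\rho=rR\leq 1$, change variables to obtain $\int_m^M \frac{dt}{Rr^2\phi(t/r)+t}\leq C$, and then absorb $Rr^2\phi(t/r)$ into $C(\alpha_0)\,r^\alpha\Phi_R(t)$ using \cref{P3} and the slow-variation bound $\eta(1/r)\leq C_{1-\alpha_0}(1/r)^{1-\alpha_0}$ for $r\leq 1$; this is correct and coincides with the paper step for step. The only divergence is your discussion of the regimes $rR>1$ and $r\geq 1$, which the paper silently excludes (its proof simply takes $\rho=rR\leq1$ and uses $r\leq1$, consistent with how the corollary is applied); there, your proposed fallback of bounding $\log(M/m)$ by a Harnack chain cannot work as stated, since the classical Harnack inequality fails for this equation and $m$ may even vanish while $M>0$ when \cref{osgood1} fails, although for $r$ in a bounded range one can instead chain the already-proved small-radius case of the corollary itself, using that $r^\alpha\Phi_R(t)+t$ only increases with $r$.
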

\begin{proof}
	We define $v \in C(B(x_0,2\rho))$, where $\rho = rR \leq 1$, by rescaling $v(y) = Ru(y/R)$. Then $v$ is a solution of \cref{thePDE} with non-homogeneity $\phi$ and \Cref{weakHarnack} implies 
	\begin{equation}
		\label{scaling weak harnack} C \geq \int_{Rm}^{RM} \frac{ds}{\rho^2\phi(s/\rho)+ s} = \int_{m}^{M} \frac{dt}{R r^2\phi(t/r)+ t}. 
	\end{equation}
	Since $\eta$ is slowly increasing function we have $\eta(t) \leq C_\ep t^{\ep}$ for all $t>1$ and for any $\ep$. It is now easy to see that if $\eps \geq \eps_0$ for some fixed $\eps_0 \in (0,1)$ then $\eta(t) \leq C_{\ep_0} t^\ep$. Therefore by \cref{P3} we deduce that for any $\alpha \in [0,\alpha_0]$ we have 
	\begin{equation*}
		Rr^2\phi(t/r) = r^2 R\eta(t/r) \frac{t}{r} \leq \Lambda_0 r \, \eta(1/r) R\eta(t)t \leq \Lambda_0 C_{1-\alpha_0} r r^{\alpha-1} \, R\phi(t) \leq C r^{\alpha}\, \phiRb(t)\,, 
	\end{equation*}
	for a constant $C(\alpha_0) > 1$. Plugging this into \cref{scaling weak harnack} gives the result. 
\end{proof}

\section{H\"older continuity estimates} \label{seccontblow}

In this section we prove interior and boundary H\"older continuity estimates (\Cref{holder cont} and \Cref{bdry holder cont}) when $\Omega$ is Reifenberg flat. We note that solutions of \cref{thePDE} are known to be H\"older continuous \cite{Si}. The point of the following results is to derive the sharp H\"older norm with respect to the $L^\infty$-norm of the solution. As we mentioned in the previous section we assume that $u$ solution of \cref{rescaledPDE,rescaled-non-homogeneity}.
\begin{theorem}
	\label{holder cont} Let $u\in C(B(x_0, 2r))$, with $r \leq 1$, be a solution of \cref{rescaledPDE}, \cref{rescaled-non-homogeneity}, and denote $M = \sup_{B(x_0,r)}|u|$. Then $u$ is H\"older continuous, i.e. for every $\rho \leq r$ the following holds 
	\begin{equation*}
		\osc_{B(x_0,\rho)} u \leq C_1 M \left(\frac{\rho}{r} \right)^\alpha + C_1\phiRb(M) \rho^{1/4} r^{1/4} 
	\end{equation*}
	for some $C_1$ and $\alpha \in (0,1/4)$, which are independent of $u, r$ and $R$. We also have the following oscillation decay, there exist a constant $C=C(\Lambda/\lambda,N)>0$ and $\tau = \tau(\Lambda/\lambda,N) \in (0,1)$ such that 
	\begin{equation}
		\label{oscdec} \osc_{B(x_0,\rho/2)} u \leq \tau \osc_{B(x_0,\rho)} u + C \phiRb(M) \sqrt{\rho}, \quad \rho \in (0,r). 
	\end{equation}
\end{theorem}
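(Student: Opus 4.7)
The strategy is first to establish the oscillation decay \cref{oscdec} by extracting a usable Harnack-type comparison from the integral inequality of \Cref{corHarnack}, and then to iterate it dyadically to obtain the pointwise H\"older estimate. The essential trick is that, although \Cref{corHarnack} only gives an \emph{integral} bound of the form $\int dt/(r^\alpha \Phi_R(t)+t)\leq C$, one can convert it into an additive ``$\inf \gtrsim c \sup - \mathrm{error}$'' decay by exploiting the slow variation of $\Phi_R(t) = \eta_R(t)\,t$ over the relevant range of $t$.

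For the oscillation decay, fix $\rho \in (0, r)$, set $M_\rho = \sup_{B(x_0, \rho)} u$, $m_\rho = \inf_{B(x_0, \rho)} u$, and $\omega = M_\rho - m_\rho$. Consider the two non-negative functions $v_1 = u - m_\rho$ and $v_2 = M_\rho - u$ on $B(x_0, \rho)$: both satisfy the pair of Pucci inequalities \cref{model1,model2} (with $\phi_R$ in place of $\phi$), since translation leaves derivatives unchanged while $u \mapsto -u$ swaps $\mathcal{P}^+_{\lambda,\Lambda} \leftrightarrow \mathcal{P}^-_{\lambda,\Lambda}$ and preserves $|Du|$. Since $v_1 + v_2 \equiv \omega$, at least one of them, call it $v$, satisfies $M_v := \sup_{B(x_0, \rho/2)} v \geq \omega/2$. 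Applying \Cref{corHarnack} to $v$ with $\alpha = 1/2$ (take $\alpha_0 = 1/2$) gives
\[
\int_{m_v}^{M_v} \frac{dt}{(\rho/2)^{1/2} \Phi_R(t) + t} \leq C,
\]
where $m_v := \inf_{B(x_0, \rho/2)} v$. By \cref{P3'} with $s=2$ and monotonicity of $\Phi_R$, $\Phi_R(t) \leq C' \Phi_R(M)$ for $t \in [0, 2M]$, so the integrand dominates $1/(A+t)$ with $A \asymp \sqrt{\rho}\,\Phi_R(M)$. Evaluating yields $\log\frac{M_v + A}{m_v + A} \leq C$, whence
\[
m_v \geq e^{-C} M_v - (1 - e^{-C}) A \geq \tfrac{1}{2} e^{-C} \omega - C'' \sqrt{\rho}\, \Phi_R(M).
\]
Since $\osc_{B(x_0,\rho/2)} u \leq \omega - m_v$ in either case ($v=v_1$ or $v=v_2$), this proves \cref{oscdec} with $\tau := 1 - \tfrac{1}{2} e^{-C} \in (0,1)$.

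For the first estimate, iterate \cref{oscdec} starting from radius $r$. Writing $\omega(\cdot) := \osc_{B(x_0, \cdot)} u$, one gets
\[
\omega(r/2^k) \leq \tau^k \omega(r) + C\, \Phi_R(M) \sqrt{r}\, \sum_{j=1}^{k} \tau^{k-j}\, 2^{-j/2}.
\]
A case analysis on whether $\tau \sqrt{2}$ is less than or greater than $1$ controls the geometric sum by a factor comparable to $(\rho/r)^{\min(\beta,\,1/2)}$ where $\beta = -\log_2 \tau$; combined with $\omega(r) \leq 2M$ and the elementary inequality $(\rho/r)^\gamma \leq r^{-1/4} \rho^{1/4}$ (valid for $\gamma \geq 1/4$ and $\rho \leq r$), this delivers the claimed bound $\omega(\rho) \leq C_1 M (\rho/r)^\alpha + C_1 \Phi_R(M) r^{1/4} \rho^{1/4}$ for a suitable $\alpha \in (0, 1/4)$; non-dyadic $\rho$ is handled by the monotonicity of $\omega$. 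The main obstacle is the first step: converting the integral Harnack into an additive oscillation reduction depends critically on the slowly varying property \cref{P3'}, which allows $\Phi_R$ to be treated as essentially constant (equal to $\Phi_R(M)$ up to a multiplicative factor) over $[0, 2M]$. Without this, the integrand would not simplify to the elementary form $1/(A+t)$ and the passage from integral to additive decay would break down.
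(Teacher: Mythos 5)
Your derivation of the oscillation decay \cref{oscdec} is correct and is essentially the paper's argument: the paper applies \Cref{corHarnack} (with $\alpha=1/2$) to both $M_\rho-u$ and $u-m_\rho$ and adds the two resulting inequalities, obtaining $\tau=(C-1)/(C+1)$, while you apply it to whichever of the two has supremum at least $\omega/2$ on the half ball; your justification that both translates are admissible, and the replacement of $\Phi_R(t)$ by $\Phi_R(M)$ on the range $[0,2M]$ via monotonicity and \cref{P3'}, match what the paper does implicitly. So the first half is fine.

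The gap is in the passage from \cref{oscdec} to the H\"older estimate. Iterating all the way from radius $r$ down to $\rho=2^{-k}r$, the error contributed by the very first step, $C\,\Phi_R(M)\sqrt r$, is damped only by $\tau^{k-1}$, so your error term is at least of order $\Phi_R(M)\sqrt r\,(\rho/r)^{\beta}$ with $\beta=-\log_2\tau$. Your elementary inequality $(\rho/r)^{\gamma}\le \rho^{1/4}r^{-1/4}$ needs $\gamma=\min(\beta,1/2)\ge 1/4$, i.e. $\tau\le 2^{-1/4}$, and this is never verified; in fact it is generically false, because the $\tau$ produced by the Harnack step has the form $1-\tfrac12 e^{-C}$ (or $(C-1)/(C+1)$ in the paper) with $C$ the large Harnack constant, so $\tau$ is close to $1$ and $\beta$ is small. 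When $\beta<1/4$ one has $\sqrt r\,(\rho/r)^{\beta}\big/\big(\rho^{1/4}r^{1/4}\big)=(r/\rho)^{1/4-\beta}\to\infty$ as $\rho/r\to0$, so the full iteration cannot yield the stated term $C_1\Phi_R(M)\rho^{1/4}r^{1/4}$ (and iterating \cref{oscdec} further does not improve $\beta$). The missing ingredient is the balancing device of \cite[Lemma 8.23]{GT}, which is exactly what the paper invokes with $\mu=1/2$: for a given $\rho$, run the iteration only between the intermediate scale $R_1=\sqrt{\rho r}$ and $\rho$, bound $\omega(R_1)\le\omega(r)\le 2M$ trivially, and bound the accumulated error by $\sigma(R_1)/(1-\tau)\asymp\Phi_R(M)(\rho r)^{1/4}$ using that $\sigma(t)=C\Phi_R(M)\sqrt t$ is nondecreasing. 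This halves the H\"older exponent (roughly $\beta/2$), which is harmless since the theorem only claims some $\alpha\in(0,1/4)$, but it produces exactly the claimed additive term; with this correction the rest of your argument goes through.
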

\begin{proof}
	Without loss of generality we may assume that $x_0 = 0$. For every $\rho \leq r$ we denote $M_{\rho} = \sup_{B_\rho} u$ and $m_{\rho} = \inf_{B_\rho} u$. Define functions $v(x) = M_{\rho}-u$ and $w(x)= u(x) - m_{\rho}$ which are non-negative in $B_\rho$. Denote $M_{v,\rho/2} = \sup_{B_\rho/2} v$, $m_{v,\rho/2} = \inf_{B_\rho/2} v$ and $M_{w,\rho/2}$ and $m_{w,\rho/2}$ for the supremum and infimum of $w$. Note that $M_{v,\rho/2}, M_{w,\rho/2} \leq 2M$. Since $v$ is a solution of \cref{rescaledPDE,rescaled-non-homogeneity} we obtain from \Cref{corHarnack} with $\alpha = 1/2$ that 
	\begin{equation*}
		\int_{m_{v,\rho/2}}^{M_{v,\rho/2}} \frac{dt}{ \sqrt{\rho} \phiRb(M) + t} \leq C. 
	\end{equation*}
	By integrating this we deduce 
	\begin{equation*}
		M_{v,\rho/2} \leq C m_{v,\rho/2} + C \phiRb(M)\sqrt{\rho}. 
	\end{equation*}
	This in turn implies 
	\begin{equation}
		\label{from harnack v} M_\rho - m_{\rho/2} \leq C (M_\rho - M_{\rho/2}) + C \phiRb(M)\sqrt{\rho}. 
	\end{equation}
	Similar argument applied to $w$ yields 
	\begin{equation}
		\label{from harnack w} M_{\rho/2} - m_{\rho} \leq C ( m_{\rho/2} - m_{\rho}) + C \phiRb(M)\sqrt{\rho}. 
	\end{equation}
	
	Denote $\omega(\rho) = \osc_{B_\rho} u$. Adding \cref{from harnack v} and \cref{from harnack w} gives 
	\begin{equation}
		\nonumber \omega(\rho/2) \leq \tau \omega(\rho) + C \phiRb(M) \sqrt{\rho} 
	\end{equation}
	for every $\rho \leq r \leq 1$ where $\tau = \frac{C-1}{C+1}<1$. This is \cref{oscdec}. Moreover by \cite[Lemma 8.23]{GT} the following holds 
	\begin{equation*}
		\omega(\rho) \leq C\omega(r) \left( \frac{\rho}{r}\right)^{\alpha} + C \phiRb(M) \rho^{1/4} r^{1/4} 
	\end{equation*}
	for some $\alpha >0$. 
\end{proof}

We will turn our attention to the H\"older continuity up to the boundary for solutions vanishing at the boundary. The boundary regularity does not follow directly from the interior regularity. There is an additional difficulty due to the fact that the comparison principle does not hold for \cref{thePDE}. In fact, even the maximum principle in general is not true. We need two lemmas which allow us to overcome the lack of comparison principle. 

The first lemma shows the existence of the maximal solution of the equation \cref{model2} with a given Dirichlet boundary data. Here we do not need the assumption \cref{P3} so we may state the result for $\phi$ instead of $\phiRb$. 
\begin{lemma}
	\label{maximal comparison function} Let $\Omega$ be a bounded Lipschitz domain. Assume that $u \in C(\overline{\Omega})$ is a subsolution of \cref{model2} in $\Omega$. Then there exists a function $v \in C(\overline{\Omega})$ which is a solution of \cref{model2} in $\Omega$ such that $ u=v$ on $
	\partial \Omega$ and $u \leq v$ in $ \Omega$. 
\end{lemma}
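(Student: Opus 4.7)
The plan is to construct $v$ by Perron's method applied to the model equation \cref{model2}, since Perron's construction relies only on stability of subsolutions under pointwise suprema and local perturbations, not on any comparison principle (which generally fails for \cref{thePDE} under our hypotheses). The first ingredient is a continuous supersolution $\Psi$ of \cref{model2} on $\overline\Omega$ with $\Psi\ge u$ on $\partial\Omega$. After translating so that $\overline\Omega\subset B_R\setminus \overline B_r$ for some $0<r<R$, a candidate is $\Psi(x)=a-b|x|^{-\beta}$: a direct computation gives $\mathcal{P}_{\lambda,\Lambda}^-(D^2\Psi)\ge c_\beta |x|^{-\beta-2}$ whenever $\beta$ is chosen with $\lambda(N-1)>\Lambda(\beta+1)$, while $|D\Psi|$ is a power of $|x|$; the slow-growth estimate $\eta(t)\le C_\varepsilon t^\varepsilon$ following from \cref{P2,P3} then lets us absorb $\phi(|D\Psi|)=\eta(|D\Psi|)|D\Psi|$ into the Pucci term on $\overline\Omega$, choosing $b$ small and then $a$ large. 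If the dimensional or ellipticity constraints obstruct this power barrier, an exponential variant $a-be^{-\mu|x-y_0|^2}$ plays the same role.

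Next, let $\mathcal S$ be the family of upper semicontinuous functions $w:\overline\Omega\to\er$ that are viscosity subsolutions of \cref{model2} in $\Omega$ and satisfy $w\le u$ on $\partial\Omega$ together with $w\le\Psi$ on $\overline\Omega$, and set $v(x):=\sup_{w\in\mathcal S}w(x)$. Because $u\in\mathcal S$, the family is nonempty and $u\le v\le\Psi$ on $\overline\Omega$. Standard viscosity-solution arguments show the upper semicontinuous envelope $v^*$ still lies in $\mathcal S$, so $v^*=v$ is itself a subsolution. That $v_*$ is a viscosity supersolution of \cref{model2} follows from the classical bump-function argument: if $v_*-\varphi$ attained a strict local minimum at an interior point $x_0$ where the supersolution inequality failed, a small upward perturbation of $\varphi$ would produce a new element of $\mathcal S$ strictly above $v$ near $x_0$, contradicting the definition of $v$. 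This step uses only continuity of the nonlinearity in $(X,p)$.

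It remains to show $v^*=v_*=u$ on $\partial\Omega$, so that $v\in C(\overline\Omega)$ with the prescribed boundary data. Since $u\le v$ already forces $v_*(x_0)\ge u(x_0)$ at each $x_0\in\partial\Omega$, only the upper bound $v^*(x_0)\le u(x_0)$ needs work. Fix $x_0\in\partial\Omega$; the Lipschitz (hence exterior-cone) regularity of $\partial\Omega$ lets us construct a local classical supersolution $\beta_{x_0}\in C^2(\overline{\Omega\cap B_\rho(x_0)})$ of \cref{model2} which vanishes at $x_0$ and exceeds $2\|u\|_\infty$ on $\overline\Omega\cap\partial B_\rho(x_0)$ --- the standard candidate is a power of the distance from an exterior-cone vertex, with \cref{P1,P2,P3} ensuring $\phi(|D\beta_{x_0}|)$ is dominated by the Pucci term. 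The upper barrier $u(x_0)+\omega_u(|x-x_0|)+\beta_{x_0}(x)$, with $\omega_u$ the modulus of continuity of $u|_{\partial\Omega}$, dominates every $w\in\mathcal S$ on the relative boundary of $\Omega\cap B_\rho(x_0)$, and hence on all of that set by a localized maximum-principle argument for \cref{model2}; letting $x\to x_0$ gives $v^*(x_0)\le u(x_0)$.

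The step I expect to cost the most work is the boundary-barrier construction. Because $\beta_{x_0}$ must vanish at $x_0$, the competition between the decay of $\mathcal{P}_{\lambda,\Lambda}^-(D^2\beta_{x_0})$ and the growth of $\phi(|D\beta_{x_0}|)$ near $x_0$ is tight, and this is exactly where the slow-growth hypothesis \cref{P2} on $\eta$ becomes essential; the global barrier $\Psi$ in Step~1 is comparatively forgiving because it is bounded away from the level set $\{\beta=0\}$. Moreover, the conclusion that this barrier actually dominates all of $\mathcal S$ inside $\Omega\cap B_\rho(x_0)$ cannot invoke a global comparison principle, and must instead come from a localized sup-convolution or doubling-variable comparison applied only to the specific model equations \cref{model1,model2}.
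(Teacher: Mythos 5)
Your plan hinges on a global continuous supersolution $\Psi$ of \cref{model2} on $\overline\Omega$ with $\Psi\ge \sup_{\overline\Omega}u$, and this is where the argument breaks. For your radial ansatz the supersolution inequality reduces (up to constants) to $c(\beta)\,|x|^{-1}\ \ge\ \eta(|D\Psi(x)|)$ on all of $\overline\Omega$: the Pucci term scales like $(\text{gradient})/(\text{length scale})$, while the drift is $\eta(\text{gradient})\cdot(\text{gradient})$ with $\eta\ge 1$, so any barrier whose gradient stays in a bounded range forces $\lambda/\operatorname{diam}\Omega\gtrsim \eta_{\min}$, i.e. it only works on small domains; and letting the gradient range spread (large $\beta$, or the exponential variant) makes $\eta(|D\Psi|)$ grow at least like a fixed power of $\beta$ in the exponent by \cref{P3}, which the linear-in-$\beta$ Pucci gain cannot absorb. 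The "choose $b$ small, $a$ large and absorb via $\eta(t)\le C_\eps t^\eps$" step therefore does not close. Worse, the obstruction is not an artifact of the ansatz: Lemma \ref{maximal comparison function} is stated for an \emph{arbitrary} bounded Lipschitz domain and an arbitrary $\phi$ satisfying \cref{P1,P2,P3}, including $\phi$ failing \cref{osgood2}. In that case one can write down explicit concave ``bump'' subsolutions of \cref{model2} (slope decreasing at rate $\lambda|w''|\le\phi(|w'|)$, so the rise fits inside a slab of width $\lambda\int_0^\infty ds/\phi(s)<\infty$ while the height $\int_0^K ds/\eta(s)$ is arbitrarily large) with boundary values below $u$; hence the uncapped Perron supremum is $+\infty$ on an open set, and no supersolution cap of the required height exists on large domains — essentially the same phenomenon that forces the smallness restriction $r\le c_0/\eta_R(M)^2$ in \Cref{almost maximum princ}. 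Capping by something that is not a supersolution destroys the bump step of Perron's method, so Step 1 cannot be repaired within your framework.

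There is a second gap even where a cap exists: Perron's method without a comparison principle only delivers $v^*$ a subsolution and $v_*$ a supersolution; the upgrade $v^*=v_*$ (continuity in $\Omega$ and attainment of the boundary data) is exactly the comparison principle for \cref{model2}, which fails under the standing assumptions (non-Lipschitz $\phi$ at $0$, possibly $\phi(0)>0$) — this is the very difficulty the lemma is meant to bypass, and your localized doubling-variable step only addresses comparison against classical \emph{strict} barriers, not $v^*$ against $v_*$. The paper's proof avoids both problems: it regularizes the nonlinearity to $\phi_\eps(t)=(1+\eps)\max\{\phi(t),\phi(\eps)\}$, which satisfies a quadratic structure condition so that the Dirichlet problem is solvable by \cite{Si} on any bounded Lipschitz domain regardless of Osgood conditions; all comparisons ($v_{\eps_1}\le v_{\eps_2}$ and $u\le v_\eps$) are between sub- and supersolutions of \emph{strictly ordered} equations, where the semijet argument closes without any structural Lipschitz hypothesis on $\phi$; and the limit $v=\lim_{\eps\to0}v_\eps$ is continuous by the interior H\"older estimate of \Cref{holder cont} plus the squeeze $u\le v_\eps\le v_{\eps_0}$ at the boundary. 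If you want to salvage a Perron-type argument you would have to build this $\eps$-strictification into both the cap and the bump step, at which point you are essentially reconstructing the paper's approximation scheme.
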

\begin{remark}
	\label{maximal comparison function remark} Assume that we have a subsolution of \cref{model2} with nonlinearity $\phiRa$. Then by scaling to the original scale as in the proof of \Cref{corHarnack} we get a subsolution to \cref{model2} with nonlinearity $\phi$, consequently we can apply \Cref{maximal comparison function} and subsequently scale back to the canonical scale to obtain that \Cref{maximal comparison function} also holds for subsolutions of \cref{model2} with nonlinearity $\phiRa$. 
\end{remark}
\begin{proof}
	For $\eps>0$ small we define 
	\begin{equation*}
		\phi_\eps(t):= (1+\eps)\max\{\phi(t), \phi(\eps) \}, 
	\end{equation*}
	where $\phi$ is from \cref{non-homogeneity}. It is straightforward to check that $\phi_{\eps}$ satisfies the assumptions \cref{P1,P2,P3}. We claim that there exists a solution $v_\eps \in C(\overline{\Omega})$ of 
	\begin{equation}
		\label{bar eq} 
		\begin{cases}
			&\mathcal{P}^-_{\lambda, \Lambda}(D^2v) = \phi_\eps(|Dv|) \quad \text{in }\, \Omega,\\
			&v=u \qquad \text{on }\, 
			\partial \Omega. 
		\end{cases}
	\end{equation}
	
	The existence of $v_\eps$ follows from \cite{Si}. We need to check that \cref{bar eq} satisfies the assumptions in \cite{Si}. First we write \cref{bar eq} as
	\[ \mathcal{P}^-_{\lambda, \Lambda}(D^2v) - \phi_\eps(|Dv|) + (1+\eps) \phi(\eps) = (1+\eps)\phi(\eps) \]
	and denote $F(X,p) = \mathcal{P}^-_{\lambda, \Lambda}(X) - \phi_\eps(|p|) + (1+\eps)\phi(\eps)$. Then it holds that
	\[ F(0,0) = 0. \]
	Since the Pucci operator is uniformly elliptic \cite{CC} we only have to check that 
	\begin{equation}
		\label{check sira} \phi_\eps(t)- \phi_\eps(s) \leq C_1(t+s)|t-s| + C_2|t-s| 
	\end{equation}
	holds for every $s,t \geq 0$. Note that we allow the constants above to depend on $\eps$. Since $\phi$ is locally Lipschitz and satisfies the condition \cref{P2} we have for every $t \geq \eps$ that
	\[ \phi'(t) = \eta'(t)t+ \eta(t) \leq C \eta(t) \leq C t, \]
	where the last inequality follows from the $\eta(t) \leq Ct$ for every $t \geq 1$. Thus we have
	\[ \phi_\eps(t)- \phi_\eps(s) \leq \sup_{\eps \leq \xi \leq t+s+ \eps} \phi'(\xi)\, |t-s| \leq C\sup_{\xi \leq t+s+ \eps} |\xi| |t- s| \leq C(t+s + \eps)|t-s|. \]
	Hence we have \cref{check sira} and the existence $v_\epsilon$ follows.
	
	Let $0< \eps_1 < \eps_2$ and let $v_{\eps_1}$ and $v_{\eps_2}$ be solutions of the corresponding equations \cref{bar eq}. Let us show that the solutions are monotone with respect to $\eps$, i.e., 
	\begin{equation}
		\label{comparison claim} v_{\eps_1}(x) \leq v_{\eps_2}(x) \qquad \text{for every }\, x \in \Omega. 
	\end{equation}
	
	The claim \cref{comparison claim} follows from the standard comparison principle for semicontinuous functions and we only give the sketch of the argument. For more details and for the notation see \cite[Section 3]{CIL}. Assume that the claim does not hold. Then we conclude that there exists points $x,y \in \Omega$, a vector $p \in \rn$ and symmetric matrices $X,Y$ such that $X \leq Y$ and the pair $(p,X)$ belongs to the semi-jet $\bar{D}^{2,+}v_{\eps_1}(x)$ and $(p,Y)$ belongs to the semi-jet $\bar{D}^{2,-}v_{\eps_2}(y)$. If the functions $v_{\eps_1}, v_{\eps_2}$ were $C^2$ regular this would mean that $p = Dv_{\eps_1}(x) = Dv_{\eps_2}(y)$, $X = D^2v_{\eps_1}(x)$ and $Y = D^2v_{\eps_2}(y)$. Since $v_{\eps_1}$ is a subsolution of \cref{bar eq} we have
	\[ \mathcal{P}^-_{\lambda, \Lambda}(X) - \phi_{\eps_1}(|q|) \leq 0 \]
	and since $v_{\eps_2}$ is a supersolution of \cref{bar eq} we have
	\[ \mathcal{P}^-_{\lambda, \Lambda}(Y) - \phi_{\eps_2}(|q|) \geq 0. \]
	On the other hand, it follows from $X \leq Y$ and $\phi_{\eps_1}(|q|)< \phi_{\eps_2}(|q|)$ that
	\[ 0 \geq \mathcal{P}^-_{\lambda, \Lambda}(X) - \phi_{\eps_1}(|q|) >\mathcal{P}^-_{\lambda, \Lambda}(Y) - \phi_{\eps_2}(|q|) \geq 0 \]
	which is a contradiction. Thus we have \cref{comparison claim}. 
	
	We note that repeating the above argument we get that $u(x) \leq v_{\eps}(x)$ for every $x \in \Omega$ and for every $\eps>0$. Hence we have that $v_\eps(x) \to v(x)$ point-wise in $\Omega$ and $v \geq u$. It follows from the interior H\"older regularity \Cref{holder cont} that $v_\eps$ are locally uniformly H\"older continuous in $\Omega$. Therefore $v$ is continuous in $\Omega$ and by a standard viscosity convergence argument it is a solution of \cref{model2} in $\Omega$. Moreover, it follows from \cref{comparison claim} that for every $\eps\in (0, \eps_0)$ the following holds
	\[ u(x) \leq v_{\eps}(x) \leq v_{\eps_0}(x) \qquad \text{for every }\, x \in \overline{\Omega}. \]
	Since $u= v_{\eps_0}$ on $
	\partial \Omega$ we conclude that $v \in C(\overline{\Omega})$ and $v =u$ on $
	\partial \Omega$. 
\end{proof}

The next result shows that in small balls the equation \cref{model2} almost satisfies the maximum principle. 
\begin{lemma}
	\label{almost maximum princ} Let $v \in C(\overline{B}_r)$ be a subsolution of \cref{model2} in $B_r$ with non-homogeneity $\phiRb$ and $r\leq 1$ such that $v \leq M$ on $
	\partial B_r$ and let $\sigma>1$. There is a constant $c_0>0$, which depends on $\sigma$, such that if $r \leq \frac{c_0}{\eta_R(M)^2}$ then
	\[ \sup_{B_r}v \leq \sigma M. \]
	Furthermore if \cref{osgood1} holds then the maximum principle holds, i.e.,
	\[ \sup_{B_r}v \leq M. \]
\end{lemma}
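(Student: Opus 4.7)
The plan is to compare $v$ against an explicit radial quadratic barrier that dominates $v$ on $\partial B_r$ and is a strict classical supersolution of the Pucci equation with nonlinearity $\Phi_R$; once such a barrier is in place, a standard viscosity-subsolution-against-classical-strict-supersolution argument closes the proof.

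First I would set $w(x) = \sigma M - (\sigma - 1) M |x|^2/r^2$, a smooth radial function with $w = M \geq v$ on $\partial B_r$, $\sup_{B_r} w = w(0) = \sigma M$, and $D^2 w = -\bigl(2(\sigma - 1)M/r^2\bigr) I$. Since all eigenvalues of $D^2 w$ are negative,
\[
\mathcal{P}^-_{\lambda,\Lambda}(D^2 w) = \frac{2 n \lambda(\sigma - 1)M}{r^2}, \qquad |Dw(x)| \leq \frac{2(\sigma - 1)M}{r}.
\]
Verifying $\mathcal{P}^-(D^2 w) > \Phi_R(|Dw|)$ pointwise reduces, using $\Phi_R(t) = t \eta_R(t)$ and monotonicity of $\Phi_R$, to checking $n \lambda / r > \eta_R\bigl(2(\sigma - 1)M/r\bigr)$. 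Two applications of \cref{P3'} together with \cref{P3} give $\eta_R(2(\sigma - 1)M/r) \leq \Lambda_0^2\,\eta(2(\sigma - 1))\,\eta(1/r)\,\eta_R(M)$, and the slowly varying bound $\eta(1/r) \leq C r^{-1/2}$ coming from \cref{P2} yields the desired inequality whenever $r^{1/2} \eta_R(M) \leq c_0(\sigma)$, i.e.\ precisely when $r \leq c_0(\sigma)/\eta_R(M)^2$.

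Once $w$ is a strict classical supersolution, the comparison against the viscosity subsolution $v$ is standard: if $\sup_{B_r}(v - w) > 0$, then since $v \leq w$ on $\partial B_r$ the positive supremum is attained at some interior point $x_0$, and a constant shift of $w$ serves as a $C^2$ test function touching $v$ from above at $x_0$; the viscosity subsolution property then forces $\mathcal{P}^-(D^2 w(x_0)) \leq \Phi_R(|Dw(x_0)|)$, contradicting the strict inequality proved in the previous step. Therefore $v \leq w$ in $B_r$ and $\sup_{B_r} v \leq \sigma M$.

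For the Osgood case the quadratic barrier is not sharp enough, because $c_0(\sigma) \to 0$ as $\sigma \to 1$ and one cannot simply let $\sigma \to 1$ in the previous argument. The plan is instead to exploit that \cref{osgood1} combined with the lower bound $\phi(t) \geq t$ from \cref{P1} yields $\int_0^1 dt/\Phi_R(t) = \infty$, which upgrades the previous estimate to a genuine comparison principle for sub- and supersolutions of \cref{model2} with nonlinearity $\Phi_R$. Concretely one can mimic the approximation scheme of \Cref{maximal comparison function}: the perturbed nonlinearity is Lipschitz, so classical viscosity comparison applies between $v$ and the maximal solution of the approximate equation with boundary datum $M$; letting the approximation parameter tend to zero and invoking $\int_0^1 dt/\Phi_R(t) = \infty$ forces this maximal solution to collapse to the constant $M$, hence $v \leq M$ in $B_r$. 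This Osgood step is where I expect the main difficulty: the scaling that supports the quadratic barrier degenerates as $\sigma \to 1$, and one genuinely needs the integral divergence to rule out any non-trivial gap between the maximal $\Phi_R$-solution with boundary value $M$ and the constant $M$ itself.
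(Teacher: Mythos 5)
Your first half is correct and actually takes a more elementary route than the paper. Where the paper builds a radial barrier from the ODE $\lambda g_\eps''=\phi_\eps(g_\eps')$ (with $g_\eps'=f_\eps$ defined by inverting $t=\lambda\int_0^{f_\eps(t)}ds/\phi_\eps(s)$) and then bounds its height by a mean-value-theorem contradiction, your explicit quadratic barrier $w=\sigma M-(\sigma-1)M|x|^2/r^2$ does the same job directly: your computation of $\mathcal{P}^-_{\lambda,\Lambda}(D^2w)$ and $|Dw|$ is right, the reduction to $n\lambda/r>\eta_R\bigl(2(\sigma-1)M/r\bigr)$ via monotonicity of $\Phi_R$ also covers the center where $Dw=0$ (relevant since $\Phi_R(0)=R\phi(0)$ may be positive), and the structural estimate $\eta_R(2(\sigma-1)M/r)\leq\Lambda_0^2\,\eta(2(\sigma-1))\,\eta(1/r)\,\eta_R(M)$ together with $\eta(1/r)\leq Cr^{-1/2}$ is exactly the same use of \cref{P3'} and slow variation that the paper makes. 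The strict-supersolution-versus-viscosity-subsolution step (no interior maximum of $v-w$) is the same comparison device the paper employs, so $\sup_{B_r}v\leq\sigma M$ follows as you say, with $c_0=c_0(\sigma)$.

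The Osgood half, however, has a genuine gap. Passing to the Lipschitz approximations $\phi_\eps$ and obtaining $v\leq v_\eps$ for the solutions with boundary datum $M$ is fine (it is the comparison from \Cref{maximal comparison function}), but your key claim that $\int_0^1 dt/\Phi_R(t)=\infty$ "forces this maximal solution to collapse to the constant $M$" is precisely the statement to be proved, not a consequence of anything you have established: the constant $M$ is only a strict \emph{sub}solution of the approximate problems (since $\phi_\eps(0)>0$), so $v_\eps\geq M$, and as $\eps\to0$ the limit is some solution $v_0\geq M$ of the limit equation with datum $M$; concluding $v_0\equiv M$ is the maximum principle for an equation whose gradient nonlinearity is non-Lipschitz at $0$, which is exactly what is in question, and no standard viscosity comparison theorem applies. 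The divergence of the integral must enter through a quantitative barrier: the paper's mechanism is the radial function $w_\eps(x)=g_\eps(r)+M-g_\eps(|x|)$ built from the ODE above, whose excess $g_\eps(r)$ is controlled by the inverse of $t\mapsto\lambda\int_0^t ds/\phi_\eps(s)$ and therefore tends to $0$ under \cref{osgood1}. Your quadratic barrier cannot substitute here because, as you note, $c_0(\sigma)\to0$ as $\sigma\to1$; so to close this case you need to actually construct such an ODE-based (or equivalent) barrier rather than appeal to a comparison principle for the limit equation.
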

\begin{proof}
	For every small $\eps>0$ we define
	\[ \phi_\eps(t):= (1+\eps)\max\{\phiRb(t), \phiRb(\eps) \}, \]
	We first let $\varepsilon_0 = 1/2$ and consider $r_0$ so small that 
	\begin{equation}
		\nonumber \lambda \int_0^1 \frac{ds}{\phi_{\varepsilon_0}(s)} > 2r_0, 
	\end{equation}
	where $\lambda>0$ is the ellipticity constant of the Pucci operator $\mathcal{P}^-_{\lambda,\Lambda}$. Clearly we may choose $r_0$ such that it does not depend on $R\leq1$. Note that this implies 
	\begin{equation}
		\nonumber \lambda \int_0^1 \frac{ds}{\phi_{\varepsilon}(s)} > 2r, \qquad \text{for all } \, \varepsilon \in [0,\varepsilon_0],\, r \in (0,r_0]\,. 
	\end{equation}
	When $0 < r \leq r_0$ we may define a function $f_\eps:[0,r] \to [0,\infty)$ by the implicit function theorem as
	\[ t = \lambda \int_0^{f_\eps(t)} \frac{ds}{\phi_\eps(s)} \]
	In particular, we have $f_{\eps} < 1$ by the choice of $r_0$. Define a function $g_\eps:[0,r] \to [0,\infty)$ as
	\[ g_\eps(t) := \int_0^t f_\eps(s)\, ds. \]
	Then $g_\eps$ is increasing and satisfies $g_\eps'' =\lambda^{-1} \phi_{\eps}(g_\eps') $. We define a radial function $w_\eps: \overline{B}_r \to \er$ as
	\[ w_\eps(x) =g_\eps(r) + M - g_{\eps}(|x|). \]
	Now note that $\phi_\eps(s)$ is constant for $s \leq \eps$ and thus $g_\eps(s) = c s^2$ close to $0$. It is therefore straightforward to see that $w_\eps \in C^2(B_r)$ and a calculation shows that
	\[ \mathcal{P}^-_{\lambda, \Lambda}(D^2w) \geq \phi_\eps(|Dw|) \qquad \text{in }\, B_r \]
	and $w_\eps = M \geq v$ on $
	\partial B_r$. It follows from the fact that $v$ is a subsolution of \cref{model2} with $\phiRb$ instead of $\phi$, and from $\phi_\eps > \phiRb$ that $v - w_\eps$ does not attain local maximum in $B_r$ (see the proof of \Cref{maximal comparison function}). Hence we have $w_\eps \geq v$ in $B_r$. 
	
	Now if $\phi$ satisfies the Osgood condition 
	\begin{equation}
		\label{osgo} \int_0^{1} \frac{ds}{\phi(s)} = \infty 
	\end{equation}
	then we easily see that $w_\eps \to M$ and we get the maximum principle, i.e., 
	\begin{equation}
		\nonumber \sup_{B_r}w \leq M. 
	\end{equation}
	
	With the above in mind let us assume that \cref{osgo} does not hold. In this case it is easy to see that $f_\eps \to f$ uniformly and for every $t \in (0,r)$ it holds that 
	\begin{equation}
		\label{def of f} t = \lambda \int_0^{f(t)} \frac{ds}{\phiRb(s)}. 
	\end{equation}
	Moreover $g_\eps \to g$ uniformly with $g(t) = \int_0^t f(s)\, ds$ and $w_\eps \to w$ with 
	\begin{equation}
		\nonumber w(x) = g(r) + M - g(|x|). 
	\end{equation}
	We still have $w \geq v$ in $B_r$ and $w = M$ on $
	\partial B_r$. Thus it is enough to show that
	\[ \sup_{B_r}w \leq \sigma M \,, \]
	which by the definition of $w$ is equivalent to 
	\begin{equation*}
		g(r) \leq (\sigma-1) M =: \mu M. 
	\end{equation*}
	
	We argue by contradiction and assume $g(r) > \mu M$. Since $g(0)= 0$ it follows from the mean value theorem that there exists $\xi < r$ such that $g'(\xi) = f(\xi)= \frac{\mu M}{r}$. Therefore it follows from \cref{def of f} that 
	\begin{equation}
		\label{almost max long} 
		\begin{split}
			r > \xi = \lambda \int_0^{f(\xi)} \frac{ds}{\phiRb(s)} &= \lambda \int_0^{ \frac{\mu M}{r}} \frac{ds}{\phiRb(s)} \geq \lambda \int_{\frac{\mu M}{2r}}^{\frac{\mu M}{r}} \frac{ds}{\eta_R(s) s} \\
			&\geq \lambda \left( \sup_{\frac{\mu M}{2r} <t < \frac{\mu M}{r}} \eta_R(t) \right)^{-1} \int_{\frac{\mu M}{2r}}^{\frac{\mu M}{r}} \frac{ds}{s}\\
			&\geq \log 2 \, \lambda \left( \sup_{\frac{\mu M}{2r} <t < \frac{\mu M}{r}} \eta_R(t) \right)^{-1}. 
		\end{split}
	\end{equation}
	By the assumption \cref{P3'} we obtain
	\[ \sup_{\frac{\mu M}{2r} <t < \frac{\mu M}{r}} \eta_R(t) \leq \Lambda_0^3 \eta(\mu) \eta(1/r) \eta_R(M) \leq C \eta_R(M) r^{-1/2}, \]
	where the last inequality follows from $\eta(t) \leq C\sqrt{t}$ for $t \geq 1$. Hence by \cref{almost max long} we have
	\[ \sqrt{r} >\frac{c}{\eta_R(M)}\,, \]
	which contradicts the assumption
	\[ r \leq \frac{c_0}{\eta_R(M)^2} \]
	when $c_0>0$ is small enough. Moreover since $\eta_R \geq 1$ we see that 
	\begin{equation}
		\nonumber \frac{c_0}{\eta_R(M)^2} \leq c_0 \leq r_0 
	\end{equation}
	if $c_0 > 0$ is again small enough. 
\end{proof}

We will now prove the boundary H\"older continuity. 
\begin{theorem}
	\label{bdry holder cont} Let $\Omega \subset \rn$ be a $\delta$-Reifenberg flat domain, and assume $x_0 \in 
	\partial \Omega$. Let $u \in C(B(x_0,r) \cap \overline \Omega )$, with $0 < r \leq 1$, be a non-negative solution of \cref{rescaledPDE,rescaled-non-homogeneity} such that $u=0$ on $
	\partial \Omega$ and denote $M = \sup_{B(x_0,r) \cap \Omega} u$. Then if $\delta \leq \frac{1}{100}$, $u$ is H\"older continuous and for every $0 < \rho \leq r$ the following holds 
	\begin{equation*}
		\sup_{B(x_0, \rho) \cap \Omega } u \leq C_1 M \left(\frac{\rho}{r} \right)^\alpha + C_1\phiRb(M) \rho^{2\alpha} r^{2\alpha} 
	\end{equation*}
	for some $C_1$ and $\alpha \in (0,1/8)$. 
\end{theorem}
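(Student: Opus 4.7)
The plan is to prove a boundary oscillation-decay inequality of the form
\begin{equation*}
	M_{\rho/2} \le \tau\, M_\rho + C\, \Phi_R(M)\sqrt{\rho}, \qquad \rho \in (0, r],
\end{equation*}
where $M_\rho := \sup_{B(x_0,\rho) \cap \Omega} u$, and $\tau \in (0,1)$, $C > 0$ depend only on $\Lambda/\lambda$ and $N$; the claimed H\"older estimate then follows by iterating exactly as in the proof of \Cref{holder cont}, via \cite[Lemma 8.23]{GT}.

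To set up the oscillation decay, normalize $x_0 = 0$ and extend $u$ by zero outside $\Omega$ to obtain $\hat u \in C(B_r)$. A one-sided viscosity check at points of $\partial \Omega \cap B_r$ --- any smooth $\varphi$ touching $\hat u \ge 0$ from above at such a point has a local minimum, so $D\varphi = 0$ and $D^2\varphi \ge 0$ there --- shows that $\hat u$ is a subsolution of the model equation $\mathcal{P}^-_{\lambda,\Lambda}(D^2 v) = \Phi_R(|Dv|)$ throughout $B_r$. The $\delta$-Reifenberg flatness with $\delta \le 1/100$ together with the separation condition \cref{eqn:separating} yields, for each $\rho \in (0,r]$, a point $z$ with $B_e := B(z, \rho/8) \subset B_{3\rho/4} \setminus \overline \Omega$ and $|z| \ge \rho/4$; in particular $\hat u \equiv 0$ on $B_e$. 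On the annulus $B_\rho \setminus \overline{B_e}$ I would construct a radial barrier $\psi(x) = g(|x-z|)$ of classical power form $g(t) = A \bigl[1 - (\rho/(8t))^\gamma\bigr]$, choosing $\gamma = \gamma(\Lambda/\lambda, N)$ large enough that $\mathcal{P}^-_{\lambda,\Lambda}(D^2\psi) \ge c M_\rho / \rho^2$ for a universal $c>0$, and amplitude $A = M_\rho/(1-8^{-\gamma})$ so that $\psi = M_\rho$ on $\partial B_\rho$ and $\psi = 0$ on $\partial B_e$. A direct geometric computation, using $|x-z| \le c' \rho$ with $c' < 1$ for $x \in B_{\rho/2} \cap \Omega$, gives $\sup_{B_{\rho/2} \cap \Omega} \psi \le (1-c_*) M_\rho$ for a universal $c_*>0$. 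Setting $\tilde \psi := \psi + C \Phi_R(M)\sqrt{\rho}$, the constant safety term --- chosen using the slow growth of $\eta_R$ to dominate $\Phi_R(|D\psi|) = O(\Phi_R(M_\rho/\rho))$ against the elliptic lower bound $cM_\rho/\rho^2$ --- turns $\tilde\psi$ into a strict supersolution of the model equation with a quantified buffer.

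The main obstacle is to conclude $\hat u \le \tilde \psi$ on $B_\rho \setminus \overline{B_e}$ in the absence of a comparison principle for the original equation. Following the regularization argument from the proof of \Cref{maximal comparison function}, I would approximate $\Phi_R$ by the locally Lipschitz family $\Phi_{R,\varepsilon}(t) := (1+\varepsilon)\max\{\Phi_R(t), \Phi_R(\varepsilon)\}$, for which the classical viscosity comparison principle \cite{CIL} applies. Since $\hat u$ remains a subsolution of the regularized equation (as $\Phi_{R,\varepsilon} \ge \Phi_R$) and the strict-supersolution buffer in $\tilde\psi$ absorbs the perturbation $\Phi_{R,\varepsilon} - \Phi_R$ for $\varepsilon$ small, comparison on the annulus --- where the boundary inequalities $\tilde\psi \ge M_\rho \ge \hat u$ on $\partial B_\rho$ and $\tilde\psi \ge 0 = \hat u$ on $\partial B_e$ are immediate --- gives $\hat u \le \tilde\psi$. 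Evaluating at $x \in B_{\rho/2}\cap\Omega$ produces the required decay $M_{\rho/2} \le (1-c_*) M_\rho + C \Phi_R(M) \sqrt{\rho}$, and iteration via \cite[Lemma 8.23]{GT} then yields the H\"older bound for any $\alpha \in (0, 1/8)$; the exponent ceiling $1/8$ reflects the interaction of the barrier decay factor $1-c_*$ (smaller than the corresponding interior factor from \Cref{corHarnack}) with the $\sqrt\rho$ scaling of the non-homogeneous error term.
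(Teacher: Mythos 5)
The decisive step in your argument fails: adding the constant $C\,\Phi_R(M)\sqrt{\rho}$ to $\psi$ cannot ``turn $\tilde\psi$ into a strict supersolution'', because the operator only sees $D\tilde\psi=D\psi$ and $D^2\tilde\psi=D^2\psi$; a constant shift is invisible to the equation and can only help with boundary values. What your barrier actually requires is the pointwise inequality $c\,M_\rho/\rho^2\geq \Phi_R\!\left(C M_\rho/\rho\right)$, i.e.\ roughly $\rho\,\eta_R(M_\rho/\rho)\leq c$, and by \cref{P3'} this reduces to a smallness condition of the type $\sqrt{\rho}\,\eta_R(M)\lesssim 1$. This is false for all $\rho\in(0,r]$ uniformly when $M$ is large (take $R=1$, $\eta(t)=\log(e+t)$, $M=e^{1/\rho^2}$), so your claimed decay $M_{\rho/2}\leq \tau M_\rho+C\Phi_R(M)\sqrt{\rho}$ with $\tau,C$ depending only on $\Lambda/\lambda,N$ at \emph{every} scale cannot be obtained from this barrier. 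This is precisely the difficulty the paper's proof is built around: it replaces $u$ by the maximal solution of \Cref{maximal comparison function}, invokes the almost maximum principle of \Cref{almost maximum princ}, which is only valid under the critical-radius constraint $\rho_0\leq c_0/\eta_R(M_0)^2$, carries that constraint through the iteration (the choice of $\kappa$), and then handles the remaining range $\rho\in(\rho_0,r)$ by a separate patching argument using $\rho_0^{2}\geq c/\eta_R(M)$ to absorb $M$ into the term $C\Phi_R(M)\rho^{\tilde\alpha}r^{\tilde\alpha}$. Your proposal contains no analogue of this mechanism. A repair along your lines is conceivable --- when $\sqrt{\rho}\,\eta_R(M)\geq c_1$ the stated estimate is trivially true because $\Phi_R(M)\sqrt{\rho}\gtrsim M$, and when $\sqrt{\rho}\,\eta_R(M)\leq c_1$ the power barrier genuinely is a supersolution and your regularization/comparison scheme applies --- but this case-splitting and the subsequent assembly of the two regimes into a single H\"older bound is exactly the missing content, not a detail.

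Two smaller points. First, the geometry of the comparison is off as written: $\psi$ is radial about $z$, so it cannot equal $M_\rho$ on $\partial B_\rho$ (a sphere centered at $x_0$), and with $|z|$ up to $5\rho/8$ one has $|x-z|$ as large as $9\rho/8$ for $x\in B_{\rho/2}$, so neither the boundary inequality $\tilde\psi\geq \hat u$ on $\partial B_\rho$ nor the bound $|x-z|\leq c'\rho$ with $c'<1$ holds for the normalization you chose; the comparison should be run on an annulus centered at $z$, say $B(z,\kappa\rho)\setminus\overline{B_e}$ with $B_{c\rho}\cap\Omega$ strictly inside, which is fixable but needs to be set up correctly. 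Second, your zero-extension subsolution check and the $\Phi_{R,\varepsilon}$ regularization to compensate for the lack of a comparison principle are fine and parallel the paper's use of extension by zero and the $\phi_\varepsilon$ argument in \Cref{maximal comparison function}; the difference is that the paper then uses the interior decay \cref{oscdec} of \Cref{holder cont} on the maximal solution (via reflection) rather than an explicit exterior-ball barrier, and it is that route, together with the critical radius, that makes the constants independent of $u$ and $R$.
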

\begin{proof}
	We may assume that $\delta = \frac{1}{100}$, since the proof does not improve with smaller value of $\delta$. We may also assume that $x_0 = 0$. In this proof we will proceed with a comparison construction which is more or less standard when dealing with $\delta$-Reifenberg domains. 
	
	First let $\tau \in (0,1)$ be from \cref{oscdec} and let $\kappa \in (0,1/4)$ be a number which we choose later. Let us fix $\sigma := \frac{\tau+1}{2 \tau}$ and let $c_0(\sigma)$ be the constant from \Cref{almost maximum princ}. 
	
	For the forthcoming iteration argument we wish to prove the following. Assume that we are given a radius $0 < \rho_0 \leq r$ and an upper bound $M_0$ which satisfy 
	\begin{equation}
		\label{osciassump} \sup_{B_{\rho_0} \cap \Omega} u \leq M_{0} \leq \frac{1}{\sigma} M \qquad \text{and}\qquad \rho_0 \leq \frac{c_0}{\eta_R(M_0)^2}. 
	\end{equation}
	Let us define the following quantities, 
	\begin{align*}
		\rho_1 &= \kappa \rho_0, \quad \tau_1 = \frac{\tau+1}{2}, \\
		M_1 &= \min\{\tau_1 M_0 + C \phiRb(M) \sqrt{\rho_0},M_0\}. 
	\end{align*}
	We will show that when $\kappa>0$ is chosen small enough then \cref{osciassump} implies 
	\begin{equation}
		\label{osci} \sup_{B_{\rho_1} \cap \Omega} u \leq M_{1} \qquad \text{and}\qquad \rho_1 \leq \frac{c_0}{\eta_R(M_1)^2}. 
	\end{equation}
	Moreover the choice of $\kappa$ is independent of $u, r$ and $R$.
	
	We begin by proving the first claim of \cref{osci}. To do this we will fix the scale $\rho_0$ in \Cref{def:hyperplane:approximable} and obtain a hyperplane $\pi$ and a set of local coordinates such that after rotation, the normal of the plane is in the $e_N$ direction and $\Omega \cap B_\rho \subset \{x_N > -2 \delta \rho_0 \}$. Let us choose a point $z_0 = (0,-2\delta \rho_0) \in \er^{N-1}\times \er$ and notice that 
	\begin{align}
		\label{ball inclusion} B_{\rho_1} \subset B(z_0, \rho_0/4) \subset B(z_0, \rho_0/2) \subset B_{\rho_0}. 
	\end{align}
	Denote $D := B(z_0, \rho_0/2) \cap \{x_N > -2\delta \rho_0\}$. First note that in the domain $D \cap \Omega$ $u$ is a subsolution of \cref{model2} and vanishes continuously on $D \cap 
	\partial \Omega$. It is easy to see that if we extend $u \equiv 0$ to $D \setminus \overline{\Omega}$ then $u$ is a subsolution of \cref{model2} in $D$ with nonlinearity $\phiRa$. By \Cref{maximal comparison function,maximal comparison function remark} there exists $v \in C(\overline{D})$ which is a solution of the following Dirichlet problem 
	\begin{equation}
		\label{Q1comparison} 
		\begin{cases}
			\mathcal{P}_{\lambda, \Lambda}^-(D^2 v) = \phiRa(|Dv|) &\text{ in } D \\
			v = 0 & \text{ on } 
			\partial D \setminus \overline{\Omega} \\
			v = u & \text{ on } 
			\partial D \cap \Omega 
		\end{cases}
	\end{equation}
	and $u \leq v$ in $D$. Since $v$ solves \cref{model2} we can (again by extending as zero) use \cref{osciassump} and \Cref{almost maximum princ} to get that
	\[ \sup_{D} v \leq \sigma M_0 =: \hat M_0\,. \]
	Moreover, since the equation in \cref{Q1comparison} is of the type \cref{rescaledPDE,rescaled-non-homogeneity} we can via reflection get a signed solution of an equation of the same type in $B(z_0, \rho_0/2) $ and therefore use the estimate \cref{oscdec} in \Cref{holder cont} to get 
	\begin{equation*}
		\sup_{ B(z_0, \rho_0/4) \cap \Omega} v \leq \tau \hat M_0 + C \phiRb(\hat M_0) \sqrt{\rho_0}. 
	\end{equation*}
	Since $u \leq v$ in $D$ we have by \cref{ball inclusion} and \cref{osciassump} that
	\[ \sup_{B_{\rho_1} \cap \Omega} u \leq M_{1} \]
	which implies the first claim of \cref{osci}.
	
	To prove the second claim of \cref{osci}, the aim is to choose $\kappa$ such that it holds. First note that $\tau M_0 \leq M_1 \leq M_0$, which can be rephrased as 
	\begin{equation}
		\nonumber \tau \leq \frac{M_1}{M_0} \leq 1\,. 
	\end{equation}
	Hence using \cref{P3'} and the fact that on $(0,1)$ $\eta$ is non-increasing, we get 
	\begin{equation}
		\nonumber \eta_R(M_1) \leq \Lambda_0 \eta(M_1/M_0) \eta_R(M_0) \leq \Lambda_0 \eta(\tau) \eta_R(M_0)\,. 
	\end{equation}
	From \cref{osciassump} together with the above we get 
	\begin{equation}
		\nonumber \frac{c_0}{\eta_R(M_1)^2} \geq \frac{c_0}{\Lambda_0^2 \eta(\tau)^2 \eta_R(M_0)^2} \geq \frac{c_0}{\Lambda_0^2 \eta(\tau)^2} \rho_0\,. 
	\end{equation}
	Thus by choosing $\kappa = \min\{\frac{c_0}{\Lambda_0^2 \eta(\tau)^2},1/8\}$ we have proved the second claim of \cref{osci}.
	
	Let us now choose $M_0 := \frac{1}{\sigma} M$ and a radius $\rho_0 \leq r$ such that 
	\begin{equation}
		\label{starting radii} \rho_0 := \min \bigg\{ \frac{c_0}{\eta_R(M_0)^2},r \bigg \}. 
	\end{equation}
	Define the radii 
	\begin{equation}
		\nonumber \rho_j := \kappa^j \rho_0 
	\end{equation}
	and the quantities 
	\begin{equation}
		\nonumber M_j := \min \big \{\tau_1 M_{j-1} + C \phiRb(M) \sqrt{\rho_{j-1}},M_j\big \}. 
	\end{equation}
	Iterating the implication \cref{osciassump} $\implies$ \cref{osci} we obtain that 
	\begin{equation}
		\label{oscdecay} \osc_{B_{\rho_j}} u \leq M_j, \quad j=1,\ldots. 
	\end{equation}
	Consider now the function $\omega(\rho):[0,\rho_0]$ such that $\omega(0)=0$ and 
	\begin{equation}
		\nonumber \omega(\rho_j) = M_j, \quad \omega(t \rho_{j+1} + (1-t) \rho_j) = M_{j+1} + t(M_{j}-M_{j+1}) \qquad \text{ for $t \in (0,1)$}. 
	\end{equation}
	Fix a radius $\rho \in (0,\rho_0]$. Let $k$ be such that $\rho \in [\rho_{k+1},\rho_k]$. Then $\kappa^2 \rho \in [\rho_{k+3},\rho_{k+2}]$. Let us estimate 
	\begin{equation}
		\label{iteration inequality} \omega(\kappa^2 \rho) \leq \omega(\rho_{k+2}) \leq \tau_1 \omega(\rho_{k+1}) + C \phiRb(M) \sqrt{\rho_{k+1}} \leq \tau_1 \omega(\rho) + C \phiRb(M) \sqrt{\rho}\,. 
	\end{equation}
	Thus as in \Cref{holder cont}, we may use \cref{iteration inequality} and \cite[Lemma 8.23 (with $\tau = \kappa^2,\, \mu=1/2,\, \gamma = \tau_1$)]{GT} to get an $\tilde \alpha \in (0,1/4)$ such 
	\begin{equation}
		\label{iteration estimate1} \omega(\rho) \leq C \left (\frac{\rho}{\rho_0} \right )^{\tilde \alpha}\omega(\rho_0) + C \phiRb(M) \rho^{ \tilde \alpha} \rho_0^{ \tilde \alpha}, \quad \rho \in [0,\rho_0]\,. 
	\end{equation}
	Using \cref{oscdecay}, \cref{iteration estimate1}, the definition of $\omega$ and arguing as before, we get for a new constant $C > 1$ 
	\begin{equation}
		\label{iteration estimate2} \osc_{B_\rho \cap \Omega} u \leq C \left (\frac{\rho}{\rho_0} \right )^{\tilde \alpha} M + C \phiRb(M) \rho^{\tilde \alpha} \rho_0^{\tilde \alpha}, \quad \rho \in [0,\rho_0]\,. 
	\end{equation}
	
	We have thus proved the oscillation decay for possibly small radii $\rho \leq \rho_0$. To finish the proof let us show that \cref{iteration estimate2} implies the result. First if $\rho_0 = r$ we are done. Let us assume that $\rho_0 < r$. If $\rho \in (\rho_0,r)$ then it follows from $\tilde \alpha \in (0,1/4)$ and \cref{starting radii} that $\rho_0^{2} \geq \frac{c}{\eta_R(M)}$ and therefore 
	\begin{equation}
		\label{iteration estimate3a} \osc_{B_\rho \cap \Omega} u \leq M \leq C(\rho_0^{2} \eta_R(M))M \leq C \phiRb(M) \rho^{\tilde \alpha} r^{\tilde \alpha} . 
	\end{equation}
	On the other hand if $\rho \leq \rho_0$ we get from Young's inequality that 
	\begin{equation*}
		\label{iteration estimate3} M \left( \frac{\rho}{\rho_0} \right )^{\tilde \alpha} \leq M \left ( \frac{\rho}{r} \right )^{ \tilde \alpha} + \rho^{ \tilde \alpha} r ^{ \tilde \alpha} \rho_0^{-2\tilde \alpha} M \leq M \left ( \frac{\rho}{r} \right )^{ \tilde \alpha} + C\phiRb(M) \rho^{\tilde \alpha} r ^{ \tilde \alpha} 
	\end{equation*}
	where the last inequality follows from $\rho_0^{2} \geq \frac{c}{\eta_R(M)}$. Therefore by the above inequality together with \cref{iteration estimate2} and by \cref{iteration estimate3a} we get 
	\begin{equation*}
		\osc_{B_\rho \cap \Omega} u \leq C \left (\frac{\rho}{r} \right )^{\tilde \alpha} M + C \phiRb(M) \rho^{\tilde \alpha} r^{\tilde \alpha}, \quad \rho \in [0,r]\,. 
	\end{equation*}
	Denoting $\alpha = \tilde \alpha/2$ finishes the proof. 
\end{proof}

\section{Blow up estimates} \label{ssecblow}

In this section we study the blow-up profile of non-negative solutions of \cref{thePDE} near the boundary of an NTA-domain. The goal is to prove that every solution grows first with a growth-rate prescribed by the non-homogeneity of the equation. Then we show that there exists a critical value, which we are able to control in a quantitative way, such that after the critical value the solution blow-up as the solutions of the homogeneous equation. 

We begin by describing this critical value of the solutions. As in \Cref{ssec reduction} we may rescale such that if we have an $(L,r_0)$-NTA-domain, with $L\geq 2,$ we may assume instead that we have an $(L,2 L^3)$-NTA-domain (the NTA constant $L\geq 2$ is independent of scaling), $\Omega \subset \rn$, and by translation we can assume that $0 \in\partial \Omega$. From \cite{Jones} (see also \cite[Theorem 3.11]{JK}) we conclude that there exists an $(L',r_0)$-NTA-domain $\Omega'$ such that 
\begin{equation}
	\label{capdefinition} \Omega \cap B_{2} \subset \Omega' \subset \Omega \cap B_{2 L^2}\,, 
\end{equation}
where the constants $L',r_0$ depend only on $L \geq 2$ and the dimension $N$. As in \cite{JK} we call such a domain a \emph{cap}. Let us also define the joint boundary of the domain and the cap, $\Gamma :=\partial \Omega' \cap\partial \Omega$ and the retracted caps 
\begin{equation}
	\label{B plus} \Omega'_{s} := \Omega' \cap \{x: d(x,\Gamma) \geq s \}. 
\end{equation}
The reason for introducing the cap $\Omega'$ is the following useful property. 
\begin{lemma}
	\label{geometric lemma1} Let $x \in \Omega'_{s} \setminus \Omega'_{2s}$ for $s \leq \tilde s := r_0/(2L')$. Then there is a constant $C = C(L') > 1$ such that 
	\begin{equation}
		\nonumber d(x,\Omega'_{2s}) \leq C s\,. 
	\end{equation}
\end{lemma}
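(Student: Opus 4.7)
The plan is to exploit the corkscrew (interior thickness) condition of the NTA-domain $\Omega'$ at a boundary point of $\Omega'$ that lies on $\Gamma$ and is close to $x$. Specifically, since $x \notin \Omega'_{2s}$, by the definition in \cref{B plus} we have $d(x,\Gamma)<2s$, so we can pick a point $y\in \Gamma$ with $|x-y|<2s$ (using that $\Gamma$ is closed, or passing to a limit if necessary). Because $\Gamma\subset\partial\Omega'$, the point $y$ is admissible for the corkscrew condition \cref{NTA1} of the NTA-domain $\Omega'$.

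Next I would apply the corkscrew condition at $y$ with radius $r := 2L' s$. The hypothesis $s\leq r_0/(2L')$ ensures $r\leq r_0$, so \cref{NTA1} yields a point $a:=a_r(y)\in\Omega'$ satisfying
\begin{equation*}
|a-y|<r=2L's \qquad\text{and}\qquad d(a,\partial\Omega')>\frac{r}{L'}=2s.
\end{equation*}
Since $\Gamma\subset\partial\Omega'$, we have $d(a,\Gamma)\geq d(a,\partial\Omega')>2s$, and in particular $a\in\Omega'_{2s}$.

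Finally, by the triangle inequality,
\begin{equation*}
d(x,\Omega'_{2s})\leq |x-a|\leq |x-y|+|y-a|< 2s+2L's=2(1+L')s,
\end{equation*}
which proves the claim with $C=2(1+L')$, a constant depending only on $L'$ as required.

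The argument is essentially a one-step application of the NTA-corkscrew property, so there is no serious obstacle; the only subtlety is choosing the scale $r$ large enough that the corkscrew point escapes the tubular neighborhood $\{d(\cdot,\Gamma)<2s\}$ (which forces $r/L'\geq 2s$, hence $r\geq 2L's$), while simultaneously small enough to lie below $r_0$ (which is why the assumption $s\leq r_0/(2L')$ appears in the hypothesis). Both requirements are met simultaneously by the choice $r=2L's$.
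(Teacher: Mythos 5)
Your proof is correct and is essentially identical to the paper's: both pick a point $y\in\Gamma$ within distance $2s$ of $x$, apply the corkscrew condition of $\Omega'$ at $y$ at scale $2L's$ to obtain a point with $d(\cdot,\partial\Omega')>2s$ (hence in $\Omega'_{2s}$), and conclude by the triangle inequality with $C\approx 2L's+2s$.
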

\begin{proof}
	Let $y \in \Gamma$ such that $d(x,y)=d(x,\Gamma)$, since $\Omega'$ is an NTA-domain there exists an interior corkscrew point $z = a_{2 L' s}(y) \in \Omega'$ such that 
	\begin{equation}
		\nonumber d(z,
		\partial \Omega') > 2s\,, 
	\end{equation}
	i.e. $z \in \Omega'_{2s}$. Furthermore by the triangle inequality 
	\begin{equation}
		\nonumber |z-x| \leq |z-y|+|y-x| \leq 2L's + 2s \equiv C(L')s, 
	\end{equation}
	which proves the lemma. 
\end{proof}

For the next lemma we denote 
\begin{equation*}
	M_s := \sup_{\Omega'_{s}} u, 
\end{equation*}
where the retracted cap $\Omega'_{s}$ is defined in \cref{B plus}. As in the previous section we assume that $u$ is a solution of \cref{rescaledPDE,rescaled-non-homogeneity}. Again we write $\phiRb(t) = \eta_R(t)t$ with $\eta_R(t)= R \eta(t) +1$.
\begin{lemma}
	[Existence of a critical value] \label{blow up A} Let $\Omega$ and $\Omega'$ be as above, and denote by $A = a_1(0)$ a corkscrew point for the origin $0 \in 
	\partial \Omega$. Assume that $u\in C(\Omega \cap B_{2 L^3})$ is a non-negative solution of \cref{rescaledPDE}, \cref{rescaled-non-homogeneity} and denote $M = \sup_{\Omega'} u$. For every $\delta>0$ and $\alpha \in (0,1)$ there are $S \in (0,\tilde s]$ ($\tilde s$ is from \Cref{geometric lemma1}) and a constant $C$ depending on $\delta$ and $\alpha$ but not on $u$ and $R$ such that either 
	\begin{equation*}
		\int_{u(A)}^{M} \frac{dt}{\phiRb(t)} \leq C 
	\end{equation*}
	or 
	\begin{equation}
		\label{alt2} \int_{u(A)}^{M_S} \frac{dt}{\phiRb(t)} \leq C \qquad \text{and} \qquad S^{\alpha} \, \eta_R (M_S) \leq \delta\,. 
	\end{equation}
	If \cref{osgood2} holds then we always have \cref{alt2}. 
\end{lemma}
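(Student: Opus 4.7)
\emph{Strategy.} The plan is to chain the generalized Harnack inequality (\Cref{corHarnack}) from the corkscrew point $A$ to the joint boundary $\Gamma$ along dyadic scales $s_j=2^{-j}\tilde s$, exploiting the factor $r^\alpha$ appearing in the Harnack bound so that the per-ball contributions to $\int dt/\Phi_R$ sum as a convergent geometric series in $j$. The critical scale $S$ will then be defined as the first dyadic scale at which $S^\alpha\eta_R(M_S)\le\delta$.

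\emph{Setup and per-ball estimate.} First I will construct a Harnack chain in the NTA-cap $\Omega'$: at each scale $s_j$ take an interior corkscrew point $A_j\in\Omega'_{s_j}$ with $d(A_j,\partial\Omega')\sim s_j$. Using \Cref{geometric lemma1} together with the NTA Harnack chain condition applied to $\Omega'$, $A_j$ and $A_{j+1}$ will be joined by $N_0=N_0(L')$ balls of radius $r\sim s_j$ contained in $\Omega'$, and any $x\in\Omega'_{s_k}$ will be joined to $A_k$ by a chain of the same type. In each such ball $B$ of radius $r$, \Cref{corHarnack} provides
\begin{equation*}
\int_{\inf_B u}^{\sup_B u}\frac{dt}{r^\alpha \Phi_R(t)+t}\le C_0(\alpha).
\end{equation*}

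\emph{Dichotomy and summation.} I will set $k^*:=\inf\{j\ge 0 :\, s_j^\alpha\eta_R(M_{s_j})\le\delta\}$. If $k^*$ is finite I take $S=s_{k^*}$ and aim for (2); otherwise I aim for (1). The key algebraic identity
\begin{equation*}
\frac{1}{\Phi_R(t)}=\frac{r^\alpha+1/\eta_R(t)}{r^\alpha\Phi_R(t)+t}
\end{equation*}
converts each per-ball Harnack into a bound on $\int dt/\Phi_R$: in the dominant regime $r^\alpha\eta_R(t)\ge\delta$ the bracket is $\le(1+\delta^{-1})r^\alpha$, giving a per-ball contribution $\le(1+\delta^{-1})r^\alpha C_0$. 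The minimality of $k^*$, together with monotonicity of $\eta_R$ on $[1,\infty)$, will place every step $j<k^*$ in the dominant regime on the relevant part of its $t$-interval, and summing $\sum_j s_j^\alpha$ gives a geometric series bounded by $\tilde s^\alpha/(1-2^{-\alpha})$. Consequently
\begin{equation*}
\int_{u(A)}^{M_S}\frac{dt}{\Phi_R(t)}\le N_0 C_0(1+\delta^{-1})\frac{\tilde s^\alpha}{1-2^{-\alpha}}=:C(\delta,\alpha),
\end{equation*}
producing (2). If $k^*=\infty$, letting $j\to\infty$ in the same estimate yields (1).

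\emph{Osgood and main obstacle.} Under \cref{osgood2} one has $\int_1^\infty dt/\Phi_R=\infty$, so (1) with a finite constant forces $M<\infty$; with $M<\infty$ the slow growth of $\eta_R$ then gives $S^\alpha\eta_R(M_S)\to 0$ as $S\to 0$, so $k^*<\infty$ and (2) applies. The hard part will be the uniformity of the dominant-regime bound on a ball whose interval $[m_B,M_B]$ dips below the critical value $t^*$ with $r^\alpha\eta_R(t^*)=\delta$; I plan to handle this by splitting $[m_B,M_B]$ at $t^*$ and controlling the subdominant part with the elementary inequality $\Phi_R(t)\ge t$ combined with the classical log-Harnack estimate from \Cref{corHarnack} applied on $[m_B\wedge t^*, t^*]$, so that the small-$t$ contribution is absorbed into the constant $C(\delta,\alpha)$.
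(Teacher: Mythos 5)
Your overall strategy coincides with the paper's: compare the dyadic retracted-cap maxima $M_j=\sup_{\Omega'_{s_j}}u$ with $s_j=2^{-j}\tilde s$, chain \Cref{corHarnack} between consecutive levels using \Cref{geometric lemma1} and the NTA geometry, and convert each chained Harnack estimate into a bound on $\int dt/\Phi_R(t)$ through the dominance $s_j^{\alpha}\eta_R(t)\geq\delta$, so that the contributions sum like $\sum_j s_j^{\alpha}$; the treatment of the Osgood alternative is also the same as in the paper.

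The gap sits exactly at the point you flag as ``the hard part'', and your proposed remedy does not close it. Because you define the critical scale as the \emph{first dyadic} scale $s_{k^*}$ with $s_{k^*}^{\alpha}\eta_R(M_{k^*})\leq\delta$, for $j<k^*$ you only know dominance at the endpoint levels $M_j$, $M_{j+1}$; since $\eta_R$ is non-increasing on $(0,1)$, this does not yield $s_j^{\alpha}\eta_R(t)\geq\delta$ for all $t\in(M_j,M_{j+1})$ --- the interval may dip into the region near $t=1$ where $\eta_R$ is smallest. Your fix bounds each such subdominant portion by an absolute constant (via $\Phi_R(t)\geq t$ and the chained Harnack), but that contribution carries no factor $s_j^{\alpha}$, so it is summable only if the number of ``dip'' steps is bounded independently of $u$ --- a fact you neither state nor prove; without it the final constant depends on $u$, which is precisely what the lemma must exclude. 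The gap is fixable: a sublevel set $\{\eta_R<\delta s_j^{-\alpha}\}$ is an interval containing $1$, and by minimality of $k^*$ a dip at a step $j\leq k^*-2$ forces $M_j<1<M_{j+1}$, so monotonicity of $j\mapsto M_j$ allows at most one such step (plus possibly $j=k^*-1$), making the extra constants harmless. The paper sidesteps the issue entirely by taking $S$ to be the supremum over the \emph{continuum} of scales $s$ with $s^{\alpha}\eta_R(M_s)\leq\delta$: then any $t\in(M_j,M_{j+1})$ equals $M_s$ for some $s\in(s_{j+1},s_j)$ by the intermediate value theorem, and $s>S$ gives $s_j^{\alpha}\eta_R(t)\geq s^{\alpha}\eta_R(M_s)\geq\delta$ on the whole interval, with no exceptional steps and no case analysis on the shape of $\eta_R$.
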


Before the proof we would like to point out that the properties in \cref{alt2} for $M_S$ are exactly what we want for the critical value. The second inequality in \cref{alt2} says that the non-homogeneous part in the Harnack inequality in \Cref{corHarnack} is as small as we want. The first inequality says that we are still able to control the critical value $M_S$ in a precise way. 
\begin{proof}
	For the rest of this proof denote $\Omega_L = \Omega \cap B_{2 L^3}$. Let us fix $\delta>0$. To be able to use \Cref{geometric lemma1}, we define for every $s \in [0,\tilde s]$ (where $\tilde s$ is from \Cref{geometric lemma1}) 
	\begin{equation*}
		M_s := \sup_{\Omega'_{s}} u\,. 
	\end{equation*}
	
	Let us first assume that there exists $s \in (0,\tilde s]$ such that $s^{\alpha}\, \eta_R\left( M_s\right) \leq \delta$ and prove that this implies the second statement of the lemma. We define $S \in (0,\tilde s]$ as 
	\begin{equation*}
		S:= \sup \Big\{ s \in [0,\tilde s] \, : \, s^{\alpha}\, \eta_R\left( M_s\right) \leq \delta \Big\}. 
	\end{equation*}
	We need to show that there exists a constant such that 
	\begin{equation}
		\label{bound for S} \int_{u(A)}^{M_S} \frac{dt}{\phiRb(t)} \leq C. 
	\end{equation}
	To this aim let $K \in \en$ be such that $2^{-K-1}\tilde s \leq S \leq 2^{-K}\tilde s$. For every $k \leq K$ we define $s_k = 2^{-k}\tilde s$ and $s_{K}:= S$. Moreover we denote 
	\begin{equation*}
		M_k := \sup_{\Omega'_{s_{k}}} u. 
	\end{equation*}
	We claim that for every $k \leq K-1$ the following holds 
	\begin{equation}
		\label{for k less than K} \int_{M_{k}}^{M_{k+1}} \frac{dt}{\phiRb(t)} \leq C 2^{-k \alpha} 
	\end{equation}
	for a constant $C > 1$ depending on the NTA constant $L$. Note that $M_S = M_{K}$.
	
	Let us fix $k \leq K-1$. Let $x_{k+1}$ be a point in $\overline{\Omega'}_{s_{k+1}}$ such that $u(x_{k+1})=M_{k+1}$. By \Cref{geometric lemma1} we deduce that there exists a point $\tilde x \in \Omega'_{s_{k}}$ such that 
	\begin{equation}
		\label{closest point NTA} |x_{k+1} - \tilde x| \leq C s_{k+1} 
	\end{equation}
	for a constant $C$ depending on $L$. Moreover $d(x_{k+1},\Gamma) \geq s_{k+1}$. It can be seen from \Cref{def:NTA} (with respect to $\Omega$) that we may construct a sequence of equi-sized balls $B^j$, $j=1,\ldots,n$ with radii $\rho \approx s_k$ such that $2 B^j \subset \Omega_L$, pairwise intersecting and 
	\begin{equation}
		\nonumber x_{k+1} \in B^1, \quad \tilde x \in B^n\,, 
	\end{equation}
	where $n$ depends only on the NTA constants of $\Omega$. Let us now use \Cref{corHarnack} in each ball, and denoting 
	\begin{equation}
		\nonumber \bar m_j = \inf_{B^j} u, \quad \bar M_j = \sup_{B^j} u, \quad j=1,\ldots, n\,, 
	\end{equation}
	we get 
	\begin{equation}
		\nonumber \sum_{j=1}^n \int_{\bar m_j}^{\bar M_j} \frac{dt}{\rho^\alpha \phiRb(t) +t} \leq n\, C\,. 
	\end{equation}
	Since the balls are pairwise intersecting we get 
	\begin{equation}
		\nonumber \int_{\min_j \bar m_j}^{\max_j \bar M_j} \frac{dt}{\rho^\alpha \phiRb(t)+t} \leq n\, C\,. 
	\end{equation}
	Now note two things. First, $M_k < \max_j \bar M_j$ since $x_{k+1} \in B^1$ and second, $M_{k-1} > \min_j \bar m_j$ since $\tilde x \in B^n \cap \Omega'_{s_k}$. Thus 
	\begin{equation}
		\nonumber \int_{M_{k}}^{M_{k+1}} \frac{dt}{\rho^\alpha \phiRb(t)+t} \leq \int_{\min_j \bar m_j}^{\max_j \bar M_j} \frac{dt}{\rho^\alpha \phiRb(t)+t} \leq n\, C\,. 
	\end{equation}
	Consequently for a constant $C_0 > 1$ depending only on $\alpha$ and on the NTA constants of $\Omega$ we have 
	\begin{equation}
		\label{from weak harnack} \int_{M_{k}}^{M_{k+1}} \frac{dt}{ (s_{k}^\alpha\eta_R(t) +1)t } \leq C_0. 
	\end{equation}
	
	Let us next show that 
	\begin{equation}
		\label{joku juttu} s_{k}^\alpha\, \eta_R (t) \geq \delta \qquad \text{for all }\, M_{k} < t < M_{k+1}. 
	\end{equation}
	Indeed, by the definition of $S$ we know that the following holds 
	\begin{equation*}
		s^\alpha\, \eta_R ( M_s ) \geq \delta \qquad \text{for all }\, s_{k+1} < s < s_{k}. 
	\end{equation*}
	Let us fix $t \in (M_k,M_{k+1})$. By continuity $t = M_s$ for some $s \in (s_{k+1},s_k)$. Therefore
	\[ s_k^{\alpha} \eta_R(t) \geq s^\alpha \eta_R(M_s) \geq \delta \]
	which proves \cref{joku juttu}.
	
	Finally we have by \cref{from weak harnack} and \cref{joku juttu} that 
	\begin{equation*}
		\left(\frac{\delta}{2 } \right) \int_{M_{k}}^{M_{k+1}} \frac{dt}{s_{k}^\alpha \eta_R(t) t} \leq \int_{M_{k}}^{M_{k+1}} \frac{dt}{ (s_{k}^\alpha \eta_R(t) + 1)t} \leq C_0. 
	\end{equation*}
	This proves \cref{for k less than K} since $\phiRb(t)= \eta_R(t)t$ and $s_k \leq 2^{-k}$.
	
	Summing \cref{for k less than K} over $k = 0, \dots, K-1$ we conclude that there is a constant $C$ such that 
	\begin{equation*}
		\int_{M_0}^{M_S} \frac{dt}{\phiRb(t)} = \sum_{k=1}^K \int_{M_{k-1}}^{M_{k}}\frac{dt}{\phiRb(t)} \leq C \sum_{k=1}^K 2^{-\alpha k} \leq C. 
	\end{equation*}
	Recall that by definition $M_0 =\sup_{\Omega'_{\tilde s}} u$ where $\Omega'_{\tilde s} := \Omega' \cap \{x: d(x,\Gamma) \geq \tilde s \}$. Therefore it follows from the fact that $\Omega$ is an NTA-domain together with repeated use of the interior Harnack (\Cref{corHarnack} with equi-sized balls) as before (staying inside $\Omega_L$) that 
	\begin{equation}
		\label{the first step} \int_{u(A)}^{M_0} \frac{dt}{\phiRb(t)} \leq C 
	\end{equation}
	and \cref{bound for S} follows.
	
	We need to treat the case when $s^{\alpha}\, \eta_R\left( M_s\right) > \delta$ for all $s \in (0,1]$. We show that this implies the first claim of the lemma. We define $M_k$ as before but now $K = \infty$. In this case we argue exactly as above and observe that \cref{for k less than K} holds for every $k \in \en$. Since $M = \sup_{\Omega'} u = \lim_{k \to\infty }M_k$ we obtain 
	\begin{equation*}
		\int_{M_{0}}^{M} \frac{dt}{\phiRb(t)} = \sum_{k=0}^\infty \int_{M_{k}}^{M_{k+1}} \frac{dt}{\phiRb(t)} \leq C \sum_{k=0}^\infty 2^{-\alpha k} \leq C. 
	\end{equation*}
	Hence we have 
	\begin{equation}
		\label{second alternative} \int_{u(A)}^{M} \frac{dt}{\phiRb(t)} \leq C 
	\end{equation}
	by \cref{the first step}. 
	
	Finally we note that in the above case the assumption $s^{\alpha}\, \eta_R\left( M_s\right) > \delta$ for all $s \in (0,1]$ necessarily implies $M = \lim_{k \to\infty }M_k = \infty$. Therefore if $\phi$ satisfies \cref{osgood2} then \cref{second alternative} provides a contradiction and we are never in the case that $s^{\alpha}\, \eta_R\left( M_s\right) > \delta$ for all $s \in (0,1]$. 
\end{proof}

Next we show that if $\delta$ in \cref{alt2} is chosen small enough the solution $u$ will blow-up as the solution of the homogeneous equation. The reason for this is that we may choose $\delta$ so small that the non-homogeneous term in the Harnack inequality in \Cref{corHarnack} stays small all the way up to the boundary. This follows from the assumption that $\phi$ is of the form $\phi(t)= \eta(t)t$, where $\eta$ is a slowly increasing function We continue to use the notation $M_s := \sup_{\Omega'_{s}} u$, where the retracted cap $\Omega'_{s}$ is defined in \cref{B plus}, $\phiRb(t) = R\phi(t) +t$ and use the notation $\phiRb(t) = \eta_R (t)t$, i.e., $\eta_R(t)= R \eta(t)+1$. 
\begin{lemma}
	[Blow up rate after the critical value] \label{blow up b} Let $\Omega$, $\Omega'$, $u, \alpha$ and $S$ be as in \Cref{blow up A}. There exists a $\delta>0$ such that if for some $S \in (0,\tilde s]$ the following holds 
	\begin{equation*}
		S^{\alpha} \, \eta_R (M_S) \leq \delta, 
	\end{equation*}
	then for every $s \leq S $ 
	\begin{equation}
		\label{blow up b:a} M_s \leq \frac{C}{s^\gamma} M_S \qquad \text{and} \qquad s^\alpha \, \eta_R(M_s) \leq C 
	\end{equation}
	holds for some $\gamma>1$, where $\delta, \gamma$ and $C$ depends on $\alpha$ but not on $u$ and $R$. 
\end{lemma}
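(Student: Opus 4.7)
The argument parallels the dyadic iteration used in the proof of \Cref{blow up A}. Set $s_k := 2^{-k} S$ and $M_k := \sup_{\Omega'_{s_k}} u$ for $k \geq 0$. Exactly as in that proof, applying \Cref{corHarnack} along a chain of equi-sized balls through the NTA structure yields
\begin{equation*}
\int_{M_k}^{M_{k+1}} \frac{dt}{s_k^\alpha \phi_R(t) + t} \leq C_0
\end{equation*}
for a constant $C_0 = C_0(\alpha, L)$. Set $\tau_k := s_k^\alpha \eta_R(M_k)$ so that our hypothesis reads $\tau_0 \leq \delta$. The plan is to prove inductively that, for $\delta$ small enough depending on $\alpha$, (i) $M_{k+1}/M_k \leq K$ for some fixed $K = K(\alpha) > 2$, and (ii) $\tau_k \leq C$ for some fixed $C = C(\alpha)$.

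For (i), change variables $t = r M_k$ in the Harnack integral and use \cref{P3'} to bound $s_k^\alpha \eta_R(r M_k) \leq \Lambda_0 \eta(r) \tau_k$, whence
\begin{equation*}
\int_1^{T_k} \frac{dr}{(\Lambda_0 \eta(r) \tau_k + 1)\, r} \leq C_0, \qquad T_k := M_{k+1}/M_k.
\end{equation*}
Since $\eta$ is non-decreasing on $[1, \infty)$ and slowly varying, the transition radius $r_*(\tau_k) := \eta^{-1}(1/(\Lambda_0 \tau_k))$ can be made as large as one wishes by shrinking $\tau_k$; in particular, if $\tau_k \leq C_* := 1/(\Lambda_0 \eta(e^{2 C_0}))$ then $r_*(\tau_k) \geq e^{2 C_0}$, the integrand dominates $1/(2r)$ on $(1, T_k)$, and the Harnack bound forces $T_k \leq e^{2 C_0} =: K$. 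Hence (i) follows from (ii) with constant $C_*$.

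For (ii), combining \cref{P3'} with (i) gives the recursion
\begin{equation*}
\tau_{k+1} \leq 2^{-\alpha} \Lambda_0 \eta(T_k)\, \tau_k \leq \mu\, \tau_k, \qquad \mu := 2^{-\alpha} \Lambda_0 \eta(K).
\end{equation*}
\emph{This is the main obstacle}: $\mu$ need not lie below $1$. In the homogeneous case ($\eta \equiv 1$, $\Lambda_0 = 1$) one has $\mu = 2^{-\alpha} < 1$ and the iteration closes, but in general $\Lambda_0 \eta(K)$ can exceed $2^\alpha$. To circumvent this I would use the slowly varying property \cref{P2} more sharply: the bound on $T_k$ actually tightens as $\tau_k$ decreases (for $\tau_k \to 0$ the integrand is essentially $1/r$ and $T_k \leq e^{C_0}$), yielding a refined recursion $\tau_{k+1} \leq \nu(\tau_k) \tau_k$ with $\nu(\tau_k) \to 2^{-\alpha} \Lambda_0 \eta(e^{C_0})$ as $\tau_k \to 0$. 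By choosing $\delta$ small enough, the orbit stays inside the basin where this recursion is stable and $\tau_k$ remains bounded by a constant that depends only on $\alpha$.

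Once (i) and (ii) are established, iterating $M_{k+1} \leq K M_k$ gives $M_k \leq K^k M_S$, and writing $K = 2^\gamma$ translates to $M_{s_k} \leq (s_k/S)^{-\gamma} M_S$. Interpolating monotonically between consecutive dyadic levels (and using $S \leq 1$) yields $M_s \leq C s^{-\gamma} M_S$ for every $s \in (0, S]$ with $\gamma = \log_2 K > 1$; the assertion $s^\alpha \eta_R(M_s) \leq C$ follows from (ii) and the same monotone interpolation.
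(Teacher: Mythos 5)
Your setup (the dyadic levels $s_k = 2^{-k}S$, the chained Harnack bound $\int_{M_k}^{M_{k+1}} \frac{dt}{(s_k^\alpha\eta_R(t)+1)\,t} \leq C_0$, and the implication ``$\tau_k$ small $\Rightarrow$ $M_{k+1} \leq e^{2C_0}M_k$'') is the same as the paper's, and you correctly isolated the real difficulty: the one-step recursion for $\tau_k := s_k^\alpha\eta_R(M_k)$ need not contract. But your proposed repair does not close this gap. Even in the limit $\tau_k \to 0$ your refined multiplier is $\nu \approx 2^{-\alpha}\Lambda_0\,\eta(e^{C_0})$, and nothing in \cref{P1,P2,P3} makes this smaller than $1$: $C_0$ is a fixed structural constant coming from the Harnack chain, and already for $\eta(t)=\log t+1$ one has $2^{-\alpha}\Lambda_0\eta(e^{C_0}) \approx C_0/2^{\alpha} \gg 1$. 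When $\nu>1$ there is no basin of stability at $0$: starting from $\tau_0=\delta$ the recursion only gives $\tau_k \lesssim \nu^k\delta$, so $\tau_k$ crosses the threshold $C_*$ after roughly $\log(C_*/\delta)/\log\nu$ steps, after which the bound on $T_k$, and with it the whole iteration, is lost. Since the lemma must hold for every $s\leq S$, i.e. for all $k$, shrinking $\delta$ only postpones the failure; it does not prevent it.

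The paper closes the argument without any step-to-step recursion for $\tau_k$, by comparing each level directly with level $0$. One inducts on the cumulative estimate $M_k \leq e^{2C_0k}M_0$; granting it, \cref{P3'} gives (when $M_k\geq 1$)
\[ s_k^\alpha\,\eta_R(M_k) \;\leq\; s_k^\alpha\,\eta_R\bigl(e^{2C_0k}M_0\bigr) \;\leq\; \Lambda_0\,2^{-\alpha k}\eta\bigl(e^{2C_0k}\bigr)\, S^\alpha\eta_R(M_0) \;\leq\; \Lambda_0\,2^{-\alpha k}\eta\bigl(e^{2C_0k}\bigr)\,\delta, \]
and the decisive point is that $\sup_k 2^{-\alpha k}\eta(e^{2C_0k})<\infty$ because $\eta$ is slowly varying ($\eta(t)\leq C_\eps t^{\eps}$): the exponential decay $2^{-\alpha k}$ beats the subexponential growth of $\eta(e^{2C_0k})$ \emph{cumulatively}, even though the per-step factor may exceed $1$. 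Choosing $\delta=\delta_1\delta_2$ with $1/\delta_2 := \Lambda_0\sup_k 2^{-\alpha k}\eta(e^{2C_0k})$ and $1/\delta_1 > \Lambda_0\eta(e^{2C_0})$ keeps $s_k^\alpha\eta_R(M_k)\leq\delta_1$ for every $k$ (the case $M_k<1$ is handled separately, using that $\eta_R$ is non-increasing on $(0,1)$), which feeds back the per-step bound $M_{k+1}\leq e^{2C_0}M_k$ and closes the induction; the first claim then follows with $\gamma = 2C_0/\log 2$, and the second from $s_k^\alpha\eta_R(t)<1$ on $[M_k,M_{k+1}]$. If you replace your step (ii) by this ``compare with $k=0$'' argument (and add the first-crossing/strictness care in your step (i), which the paper supplies by taking the first $T$ with $s_k^\alpha\eta_R(T)=1$), your outline becomes the paper's proof.
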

\begin{proof}
	First, let us choose $\delta_1>0$ such that 
	\begin{equation}
		\label{choice of delta_1} \Lambda_0 \eta (e^{2 C_0}) < \frac{1}{\delta_1} . 
	\end{equation}
	Here $\Lambda_0$ is the constant from the assumption \cref{P3} and $C_0$ is from \cref{from weak harnack}. Second, we choose $\delta_2>0$ such that 
	\begin{equation}
		\label{choice of delta_2} \Lambda_0 \, \sup_{k \in \en} 2^{-\alpha k} \, \eta (e^{2C_0k}) < \frac{1}{\delta_2}. 
	\end{equation}
	This is possible since $\eta$ is slowly increasing which implies $\eta (t) \leq C_\ep t^{\ep}$ for $t \geq 1$ for all $\ep>0$. We choose $\delta>0$ as 
	\begin{equation*}
		\delta := \delta_1 \delta_2 
	\end{equation*}
	and assume that $S^{\alpha} \, \eta_R (M_S) \leq \delta $. 
	
	Denote 
	\begin{equation*}
		s_k := 2^{-k} S \qquad \text{ and} \qquad M_k := \sup_{\Omega'_{s_k}} u. 
	\end{equation*}
	First we prove that for every $k \in \en$ the following holds: 
	\begin{equation}
		\label{bound for maximum} s_k^{\alpha} \eta_R (M_k) \leq \delta_1 \qquad \text{implies} \qquad s_{k}^{\alpha} \eta_R(t) < 1 \,\, \text{for all } \, t \in [M_k, M_{k+1}]. 
	\end{equation}
	We argue by contradiction and assume that the implication \cref{bound for maximum} is not true. Let $T \in (M_k, M_{k+1}]$ be the first number for which 
	\begin{equation}
		\label{conradiction 1} s_{k}^{\alpha} \eta_R (T) = 1. 
	\end{equation}
	Since $\eta \geq 1$ is non-increasing on $(0,1)$ then necessarily $T\geq 1$. Moreover, since we assume $s_k^{\alpha} \eta_R (M_k) \leq \delta_1$ then we have $s_{k}^{\alpha} \eta_R (t) \leq 1$ for all $M_k < t <T$. As in the proof of \Cref{blow up A} we choose $x_{k+1} \in \overline{\Omega'}_{s_{k+1}}$ such that $M_{k+1} = u(x_{k+1})$ and let $\tilde x \in \Omega'_{s_{k}}$ be such that \cref{closest point NTA} holds (see \Cref{geometric lemma1}). Then we can proceed as in \cref{from weak harnack} in \Cref{blow up A} to conclude that 
	\begin{equation*}
		C_0 \geq \int_{M_{k}}^{M_{k+1}} \frac{dt}{(s_{k}^\alpha \eta_R(t) +1)\, t} \geq \int_{M_k}^{T} \frac{dt}{2t}\,. 
	\end{equation*}
	This implies $T \leq e^{2 C_0}M_k$. Since $T\geq 1$ and since $\eta_R$ is non-decreasing on $[1,\infty)$, we have by the assumptions \cref{P3'} on $\eta_R$ that 
	\begin{equation*}
		s_{k}^{\alpha} \eta_R(T) \leq s_k^{\alpha} \eta_R(e^{2 C_0}M_k) \leq \Lambda_0 \, \eta(e^{2 C_0}) \, s_k^{\alpha} \eta_R(M_k) <1 
	\end{equation*}
	by \cref{choice of delta_1}. This contradicts \cref{conradiction 1} and therefore \cref{bound for maximum} holds. 
	
	Recall that by our notation $M_S = M_0$ and $S = s_0$. We prove the first estimate in \cref{blow up b:a} by induction and claim that for every $k \in \en$ it holds that 
	\begin{equation}
		\label{induction 1} M_k \leq e^{2 C_0k} M_0. 
	\end{equation}
	Clearly \cref{induction 1} holds for $k=0$. Let us make the induction assumption that 
	\begin{equation}
		\label{indass1} \text{\cref{induction 1} holds true for $k > 0$ .} 
	\end{equation}
	First, by the assumptions on $u$ we have 
	\begin{equation*}
		s_0^{\alpha} \, \eta_R(M_0) \leq \delta. 
	\end{equation*}
	Let us show that we have 
	\begin{equation}
		\label{induction 2} s_{k}^{\alpha}\eta_R(t) <1, \qquad \text{for all } \, t \in [M_k, M_{k+1}]. 
	\end{equation}
	If $M_k < 1$ then since $\eta_R$ is non-increasing on $(0,1)$ we have 
	\begin{equation*}
		s_{k}^{\alpha}\eta_R(M_k) \leq s_{0}^{\alpha}\eta_R(M_{0}) \leq \delta < \delta_1 
	\end{equation*}
	and \cref{induction 2} follows from \cref{bound for maximum}. If $M_k \geq 1$ then by the induction assumption and by the assumptions \cref{P3'} on $\eta_R$ we have 
	\begin{equation*}
		\begin{split}
			s_k^\alpha \eta_R(M_k) &\leq s_k^\alpha\eta_R(e^{2 C_0k} M_0 ) \\
			&\leq \Lambda_0 2^{-\alpha k} \eta(e^{2 C_0k} ) \, s_0^\alpha \eta_R(M_0) \\
			&\leq \Lambda_0 2^{-\alpha k} \eta(e^{2 C_0k} ) \, \delta \leq \delta_1, 
		\end{split}
	\end{equation*}
	where the last inequality follows from \cref{choice of delta_2} and from the choice of $\delta$. Hence \cref{induction 2} follows from \cref{bound for maximum}.
	
	We need to show 
	\begin{equation}
		\label{induction 3} M_{k+1} \leq e^{2 C_0(k+1)} M_0. 
	\end{equation}
	Again arguing by iterating \Cref{corHarnack} as in \cref{from weak harnack} we may conclude that 
	\begin{equation*}
		\int_{M_k}^{M_{k+1}} \frac{dt}{(s_{k}^{\alpha} \eta_R(t) + 1)\, t} \leq C_0. 
	\end{equation*}
	Therefore by \cref{induction 2} we have 
	\begin{equation*}
		\int_{M_k}^{M_{k+1}} \frac{dt}{2 t} \leq C_0. 
	\end{equation*}
	We integrate the above inequality and use the induction assumption \cref{indass1} to deduce 
	\begin{equation*}
		M_{k+1} \leq e^{2 C_0}M_k \leq e^{2C_0(k+1)}M_0 
	\end{equation*}
	which proves \cref{induction 3}. Thus we have showed that \cref{indass1} implies \cref{induction 2} and \cref{induction 3} for $k+1$ and thus \cref{induction 1} holds for all $k \geq 0$, which implies the first estimate in \cref{blow up b:a}. The second estimate in \cref{blow up b:a} follows from \cref{induction 2}. 
\end{proof}

Using the $\delta \in (0,1)$ given by \Cref{blow up b} in \Cref{blow up A} we get the following result. 
\begin{theorem}
	\label{blow up 2} Let $\Omega$ and $\Omega'$ be as in the beginning of the section, and denote by $A = a_1(0)$ the corkscrew point for the origin $0 \in 
	\partial \Omega$. Assume that $u\in C(\Omega \cap B_{2 L^3})$ is a non-negative solution of \cref{rescaledPDE}, \cref{rescaled-non-homogeneity} and denote $M = \sup_{\Omega'} u$. Let $\alpha \in (0,1)$, then there is a constant $C_2(\alpha) > 1$ such that either 
	\begin{equation}
		\label{S0} \int_{u(A)}^{M} \frac{dt}{\phiRb(t)} \leq C_2, 
	\end{equation}
	or there is an $S \in (0,\tilde s]$ such that 
	\begin{equation}
		\label{S1} \int_{u(A)}^{M_S} \frac{dt}{\phiRb(t)} \leq C_2 \,, 
	\end{equation}
	\begin{equation}
		\label{S2} M_s \leq \frac{C_2}{s^\gamma} M_S \qquad \text{for every }\, s \in (0,S) \,, 
	\end{equation}
	and 
	\begin{equation}
		\label{S3} s^{\alpha}\, \eta_R(M_s) \leq C_2 \qquad \text{for every }\, s \in (0,S)\,. 
	\end{equation}
	However, if $\phi$ satisfies \cref{osgood2}, then \cref{S1}, \cref{S2}, and \cref{S3} always hold. 
\end{theorem}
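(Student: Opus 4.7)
The plan is to obtain this theorem by chaining Lemmas \ref{blow up A} and \ref{blow up b} in the correct order. Since both lemmas take $\alpha \in (0,1)$ as a parameter, I would fix $\alpha$ at the outset and track that the resulting constant $C_2$ depends only on $\alpha$ (and the fixed structural data of $\phi$, $\Omega$, etc.).

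First I would apply Lemma \ref{blow up b} to extract the particular threshold $\delta = \delta(\alpha) \in (0,1)$ produced there, for which the smallness condition $S^{\alpha}\eta_R(M_S) \leq \delta$ is sufficient to propagate the power-type blow-up \cref{blow up b:a} down to all scales $s \leq S$. This $\delta$ is chosen purely from the slowly-varying growth of $\eta$, independently of the solution $u$ and of $R$.

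Next, with this $\delta$ fixed, I would invoke Lemma \ref{blow up A} for the pair $(\delta,\alpha)$. The lemma presents a dichotomy: either
\begin{equation*}
\int_{u(A)}^{M} \frac{dt}{\phiRb(t)} \leq C,
\end{equation*}
in which case \cref{S0} is immediate with $C_2 := C$, or there exists $S \in (0,\tilde s]$ with
\begin{equation*}
\int_{u(A)}^{M_S} \frac{dt}{\phiRb(t)} \leq C \qquad \text{and} \qquad S^{\alpha}\eta_R(M_S) \leq \delta,
\end{equation*}
giving \cref{S1}. In this second case the smallness condition is exactly the hypothesis of Lemma \ref{blow up b}, so applying that lemma to $M_S$ yields $M_s \leq C s^{-\gamma} M_S$ and $s^{\alpha}\eta_R(M_s) \leq C$ for all $s \in (0,S)$, which are \cref{S2} and \cref{S3}. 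Taking $C_2$ to be the maximum of the finitely many constants produced (all depending only on $\alpha$) closes the dichotomy.

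For the final assertion, I would use the last sentence of Lemma \ref{blow up A}: when $\phi$ satisfies \cref{osgood2}, the first alternative of that lemma is ruled out, so we are forced into the second alternative and in particular obtain an $S \in (0,\tilde s]$ with $S^{\alpha}\eta_R(M_S) \leq \delta$. Chaining with Lemma \ref{blow up b} as above delivers \cref{S1}, \cref{S2}, and \cref{S3} unconditionally. There is no real obstacle at this stage; the work has been done in the two preceding lemmas, and the only subtlety is ensuring that $\delta$ is selected from Lemma \ref{blow up b} \emph{before} Lemma \ref{blow up A} is applied, so that the two smallness parameters match.
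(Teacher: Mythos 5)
Your proposal matches the paper's own argument: the paper derives \Cref{blow up 2} precisely by taking the $\delta$ from \Cref{blow up b} and feeding it into \Cref{blow up A}, using the dichotomy there (and its final Osgood clause) and then applying \Cref{blow up b} in the second alternative. Your ordering of the parameter choice and the handling of the \cref{osgood2} case are both correct, so there is nothing to add.
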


\section{Proof of Theorem \ref{mainthm}} \label{secproof}

This section is devoted to the proof of the main theorem. 

\subsubsection*{Reduction} As discussed in \Cref{ssec reduction} we may assume that $\Omega$ is a Lipschitz domain with constant $0 < l < 1$ small enough so that $\Omega$ is Reifenberg flat with $\delta \leq \frac{1}{100}$. Assume that $0 \in\partial \Omega$. Furthermore, again alluding to \Cref{ssec reduction} we will assume that $u \in C(B_{16} \cap \overline{\Omega})$ is a non-negative solution of \cref{rescaledPDE,rescaled-non-homogeneity}. Due to the above assumption that $\Omega$ is a Lipschitz domain with constant $l$ and the assumption on scale (\Cref{ssec reduction}), we conclude that $\Omega$ is a $(2,16)$-NTA-domain.

\subsubsection*{Setup} Let us assume that we are in the case in \Cref{blow up 2} that \cref{S1}, \cref{S2} and \cref{S3} hold. Indeed, if we have \cref{S0} then the claim is trivially true.

Denote $M := \sup_{B_1 \cap \Omega} u$. Let $C_2$ be the constant from \Cref{blow up 2} as well as the values $S, M_S$, and the cap $\Omega'$. Note that $B_2 \cap \Omega \subset \Omega'$ by \cref{capdefinition}. We wish to prove that there is $\hat C>1$ such that $M \leq \hat C\, M_S$. This will prove the claim since $\phiRb(t) \geq t$ and therefore 
\begin{equation*}
	\int_{u(A)}^{M} \frac{dt}{\phiRb(t)} \leq \int_{u(A)}^{M_S} \frac{dt}{\phiRb(t)} + \int_{M_S}^{\hat C M_S} \frac{dt}{t} \leq C_2 + \log \hat C. 
\end{equation*}

\subsubsection*{Contradiction argument} Assume that there exists a point $P_1 \in B_1 \cap \Omega$ such that 
\begin{equation}
	\label{counterassumption} u(P_1) > \hat C\, M_S. 
\end{equation}
We will in the following use the short notation $d(x) = d(x,\partial \Omega) \leq d(x,\Gamma)$ where $\Gamma=\partial \Omega \cap\partial \Omega'$. By the definition of $M_S = \sup_{\Omega'_{S}} u$ we have $d(P_1) < S$. Therefore \Cref{blow up 2} yields 
\begin{equation}
	\label{P1 blow up} u(P_1) \leq \frac{C_2}{d(P_1)^\gamma} M_S \qquad \text{and} \qquad d(P_1)^{\alpha}\, \eta_R\left(u(P_1)\right) \leq C_2, 
\end{equation}
where $\alpha$ is from \Cref{bdry holder cont}. To show the second statement above, note that by continuity there is an $s_1$ such that $M_{s_1} = u(P_1)$ and $d(P_1) \leq s_1$. Thus for this particular $s_1$ we get from \Cref{blow up 2} that 
\begin{equation}
	\nonumber s_1^{\alpha}\, \eta_R\left(M_{s_1}\right) \leq C_2 
\end{equation}
which gives the statement. From \cref{counterassumption} we conclude 
\begin{equation}
	\label{mainthm step1} d(P_1) \leq \left( \frac{C_2}{\hat C} \right)^{\frac{1}{\gamma}}=: d_1 \qquad \text{and} \qquad d(P_1)^{\alpha}\, \eta_R\left(u(P_1)\right) \leq C_2. 
\end{equation}

Let $k > C_1$ be a number such that 
\begin{equation}
	\label{first holder small} C_1 \, k^{-\alpha} < 2^{-\gamma-1}, 
\end{equation}
where $C_1$ and $\alpha$ are from \Cref{bdry holder cont} and $\gamma>1$ is from \Cref{blow up 2}. Moreover, by choosing $\hat C$ in \cref{counterassumption} large enough we may assume that $d_1$ is so small that 
\begin{equation}
	\label{second holder small} C_1\, C_2\, \Lambda_0 \, 2^{-\alpha}\,\phi(2^\gamma) d_1^{\alpha}< 2^{-\gamma-1} 
\end{equation}
and 
\begin{equation}
	\label{kd1 small} k d_1 < \frac{1}{2}, 
\end{equation}
where $C_2$ is from \Cref{blow up 2} and $\Lambda_0$ is from the assumption \cref{P3}. Let $\hat P_1$ be a point on $\partial \Omega$ such that $|P_1- \hat P_1 | = d(P_1)$. Let us show that there is a point $P_2 \in B_{kd_1}(\hat P_1)\cap \Omega$ such that 
\begin{equation}
	\label{to show main} u(P_2) \geq 2^{\gamma} u(P_1), \qquad d(P_2) \leq \frac{d_1}{2}, \qquad \text{and} \qquad d(P_2)^{\alpha}\, \eta_R\left(u(P_2)\right) \leq C_2. 
\end{equation}

First we use \Cref{bdry holder cont} in $B_{kd_1}(\hat P_1) \cap \Omega$ with $r = kd_1$ and $\rho = d(P_1) \leq d_1$, and conclude that there is a point $P_2 \in B_{kd_1}(\hat P_1) \cap \Omega$ such that $u(P_2)\geq u(P_1)$ and by \cref{kd1 small} we have 
\begin{align}
	\label{from Holder cont} \sup_{B_\rho(\hat P_1) \cap \Omega} u &\leq C_1 u(P_2) k^{-\alpha} + C_1\phiRb( u(P_2)) \, (k d_1)^{2 \alpha} d(P_1)^{2\alpha} \\
	&\leq C_1 u(P_2) k^{-\alpha} + C_1\phiRb( u(P_2)) \, 2^{-\alpha} d(P_1)^{2\alpha}\,. \nonumber 
\end{align}

Let us show the first claim in \cref{to show main}, i.e., 
\begin{equation*}
	u(P_2) \geq 2^{\gamma} u(P_1). 
\end{equation*}
We argue by contradiction and assume that 
\begin{equation*}
	u(P_2) <2^{\gamma} u(P_1). 
\end{equation*}
Then we have by the assumption \cref{P3'} on $\eta$ that 
\begin{equation*}
	\begin{split}
		\phiRb(u(P_2)) \leq \phiRb(2^{\gamma} u(P_1)) &= 2^{\gamma} \eta_R(2^{\gamma} u(P_1)) \, u(P_1)\\
		&\leq \Lambda_0 (2^{\gamma}\eta(2^{\gamma})) \, \eta_R( u(P_1))u(P_1) \\
		&= \Lambda_0 \phi(2^{\gamma}) \, u(P_1) \, \eta_R( u(P_1)) . 
	\end{split}
\end{equation*}
Therefore by \cref{from Holder cont}, the above inequality, \cref{first holder small} and \cref{mainthm step1}, and finally by \cref{second holder small} we conclude 
\begin{equation*}
	\begin{split}
		u(P_1) \leq \sup_{ B_\rho(\hat P_1) \cap \Omega} u &\leq C_1 u(P_2) k^{-\alpha} + C_1 2^{-\alpha} \phiRb(u(P_2)))d(P_1)^{2\alpha} \\
		&\leq C_1 u(P_2) k^{-\alpha} + C_1 \Lambda_0 2^{-\alpha}\phi(2^{\gamma})d_1^\alpha\, u(P_1)\, \eta_R( u(P_1)) d(P_1)^\alpha\\
		&\leq 2^{-\gamma-1} u(P_2)+ C_1C_2 \Lambda_0 2^{-\alpha}\phi(2^\gamma) d_1^{\alpha} \, u(P_2) \\
		&\leq 2^{-\gamma} \,u(P_2). 
	\end{split}
\end{equation*}
This contradicts $u(P_2) <2^{\gamma} u(P_1)$ and thus the first claim in \cref{to show main} is proved. To continue we use the same argument as in \cref{P1 blow up} by applying \Cref{blow up 2} to get 
\begin{equation*}
	u(P_2) \leq \frac{C_2}{d(P_2)^\gamma}M_S, \qquad \text{and} \qquad d(P_2)^{\alpha}\, \eta_R\left(u(P_2)\right) \leq C_2, 
\end{equation*}
which proves the third claim. Since $u(P_2) \geq 2^{\gamma} u(P_1)$ we deduce 
\begin{equation*}
	\hat C M_S \leq u(P_1) \leq 2^{-\gamma} u(P_2) \leq 2^{-\gamma} \, \frac{C_2}{d(P_2)^\gamma}M_S. 
\end{equation*}
This implies 
\begin{equation*}
	d(P_2) \leq \frac{1}{2} \left( \frac{C_2}{\hat C} \right)^{\frac{1}{\gamma}} = \frac{d_1}{2}. 
\end{equation*}
Hence we have proved \cref{to show main}.

We may repeat the argument for \cref{to show main} we find a sequence of points $(P_i)$ such that $P_{i} \in B_{k d_{i-1}}(\hat P_{i-1}) \cap \Omega$, 
\begin{equation}
	\label{u blows up} u(P_{i}) \geq 2^{\gamma} u(P_{i-1}) \qquad \text{and} \qquad d(P_i) := \dist(P_i, 
	\partial \Omega) \leq 2^{-i+1}d_1 =: d_{i} 
\end{equation}
for every $i = 2,3,\dots$. By construction for every $l \geq 2$ we have 
\begin{equation*}
	|P_{l} - P_1| \leq \sum_{i = 1}^{l-1} |P_{i+1} - P_i| \leq \sum_{i = 1}^{l-1} k d_{i} = 2 k d_1 \sum_{i = 1}^{l-1} 2^{-i} \leq 2 k d_1 <1. 
\end{equation*}
Since $P_1 \in B_1 \cap \Omega$ we have $P_i \in B_2 \cap \Omega$ for every $i \in \en$. Moreover 
\begin{equation*}
	\lim_{i \to \infty} \dist(P_i, 
	\partial \Omega) = 0. 
\end{equation*}
By \cref{u blows up} we deduce that 
\begin{equation*}
	\lim_{i \to \infty} u(P_i) = \infty 
\end{equation*}
which contradicts the fact that $u$ vanishes continuously on $\partial \Omega$. \qed

\section{The Boundary Harnack Principle} \label{secbhi}

In this section we use the Carleson estimate to prove a boundary Harnack principle for two non-negative solutions which vanish on the boundary (\Cref{the boundary Harnack}). The proof is based on barrier function estimate and this requires the boundary of the domain to satisfy exterior and interior ball condition, i.e., the boundary has to be $C^{1,1}$-regular. 

\subsection{\em Proof of Theorem \ref{the boundary Harnack}}

Since $\Omega$ is a $C^{1,1}$-domain we may, by flattening the boundary, rescaling (see \Cref{ssec reduction}) and translating, assume that $0 \in\partial \Omega$ and $\Omega \cap B(0,16C) = \rn_+ \cap B(0,16C)$, where $C$ is the constant in \Cref{mainthm}, and $u,v$ are solutions of \cref{rescaledPDE,rescaled-non-homogeneity}.

It is enough to show that 
\begin{equation}
	\label{ratio bounded} \sup_{ t \in (0,1) } \frac{v(z+te_N)}{u(z+te_N)} \leq \frac{\mu_1}{\mu_0} 
\end{equation}
for every $z \in B(0,C) \cap \{ x_n = 1 \}$ for numbers $\mu_0$ and $\mu_1$ which satisfy the bound stated in the theorem. In fact, it is enough to show \cref{ratio bounded} for $z = e_N$.

Denote $x_0 = -e_N$, $x_1 = 2e_N$ and $M_v = \sup_{B_3^+ } v$, $m_u = \inf_{B(x_1,1)}u$ where $B_3^+ = B_3 \cap \rn_+$. In particular, $m_u \leq M_v$. First, by \Cref{corHarnack} we deduce 
\begin{equation}
	\label{estimate m_u} \int_{m_u}^{u(A)} \frac{ds}{\phiRb(s)} \leq C. 
\end{equation}
Second, by \Cref{mainthm} we have 
\begin{equation}
	\label{estimate M_v} \int_{v(A)}^{M_v} \frac{ds}{\phiRb(s)} \leq C. 
\end{equation}
We divide the proof in two cases. First we assume that 
\begin{equation}
	\label{assume m_u} \int_0^{m_u/3} \frac{ds}{\phiRb(s)} \geq 4\tilde{C} 
\end{equation}
and 
\begin{equation}
	\label{assume M_v} \int_{M_v}^{\infty} \frac{ds}{\phiRb(s)} \geq 2\tilde{C} 
\end{equation}
holds, where $\tilde{C}$ is a large constant which we choose later. 

We construct two $C^2$-regular barrier functions $w_1, w_2$ such that 
\begin{equation}
	\label{model1.1} \mathcal{P}_{\lambda, \Lambda}^-(D^2 w_2 ) \geq 2 \phiRb(|Dw_2|), \qquad \text{in } V:= B(x_0, 3)\setminus \bar{B}(x_0,1) 
\end{equation}
and 
\begin{equation}
	\label{model2.1} \mathcal{P}_{\lambda, \Lambda}^+(D^2 w_1) \leq - 2\phiRb(|Dw_1|), \qquad \text{in } U:= B(x_1, 2)\setminus \bar{B}(x_1, 1). 
\end{equation}
Moreover $w_1, w_2$ are such that their gradient do not vanish and they have boundary values $w_2 \geq 0$ on $\partial B(x_0,1)$ and $w_2 = M_v$ on $\partial B(x_0, 3)$, and $w_1 = 0$ on $\partial B(x_1, 2)$ and $w_1 = m_u$ on $\partial B(x_1, 1)$. Hence we have that $w_2 \geq v$ on $\partial V$ and $w_1 \leq u$ on $\partial U$. Since $v$ is a viscosity subsolution of \cref{model2} and since $|Dw_2|>0$ it follows from \cref{model1.1} and the definition of viscosity subsolution that $v-w_2$ does not attain local maximum in $V$. Therefore we deduce that $w_2 \geq v$ in $V$. Similarly we have $w_1 \leq u$ in $U$. Thus it is enough to bound the ratio 
\begin{equation*}
	\sup_{ t \in (0,1) } \frac{w_2(te_N)}{w_1(te_N)}. 
\end{equation*}

To construct $w_1$ we define $g:(0,1) \to \er$ such that 
\begin{equation}
	\label{construct g} t = \int_{\mu_0}^{g(t)} \frac{ds}{\tilde{C} \phiRb(s) } \quad \text{for }\, t \in (0,1) 
\end{equation}
where $\tilde{C}>1$, which is the constant in \cref{assume m_u} and \cref{assume M_v}, and $0 < \mu_0 \leq m_u$ are constants which we choose later. Note that $g$ is well defined by the implicit function theorem due to \cref{assume M_v} (recall that $m_u \leq M_v$). Then we have $g(0)= \mu_0$ and $g' =\tilde{C} \phiRb(g)$. We define the lower barrier $w_1: U \to \er$ by 
\begin{equation*}
	w_1(x):= \int_0^{2-|x-x_1|} g(t)\, dt. 
\end{equation*}
Then $w_1 = 0$ on $\partial B(x_1, 2)$. If we choose $\mu_0 = 0$ in \cref{construct g} we deduce from \cref{assume m_u} that $g(t) \leq m_u/3$ for all $t \in (0,1)$. This implies $w_1(x) \leq m_u/3$ for all $x \in\partial B(x_1, 1)$. On the other hand, by choosing $\mu_0 = m_u$ in \cref{construct g} yields $g(t) > m_u$ for all $t \in (0,1)$, which implies $w_1(x) > m_u$ for all $x \in\partial B(x_1, 1)$. Hence, by continuity we may choose $0 < \mu_0 < m_u$ such that $w_1 = m_u$ on $\partial B(x_1, 1)$. Finally it follows from the construction that 
\begin{equation}
	\nonumber \label{} \inf_{U} |Dw_1| \geq \inf_{t \in (0,1)} g(t) \geq \mu_0 >0\,. 
\end{equation}

After a straightforward calculation we see that since $g' =\tilde{C} \phiRb(g)$, we may choose the constant $\tilde C>2$ in \cref{construct g} large enough such that $w_1$ satisfies the following inequality in $U$ 
\begin{equation*}
	\begin{split}
		\mathcal{P}_{\lambda, \Lambda}^+(D^2 w_1(x)) &= -\lambda \tilde{C} \phiRb(g(2-|x-x_1|)) +\frac{n-1}{|x-x_1|}\Lambda\, g(2-|x-x_1|) \\
		&\leq -2 \phiRb(g(2-|x-x_1|)) \\
		&=- 2 \phiRb(|Dw_1(x)|). 
	\end{split}
\end{equation*}
The inequality above follows from $\phiRb(t)\geq t$.

The upper barrier function $w_2$ is constructed similarly by defining first for every $\mu_1 \geq M_v/3$ a function $f :(0,3) \to \er$ as 
\begin{equation*}
	t = \int_{f(t)}^{\mu_1} \frac{ds}{\tilde{C}\phiRb(s)} \quad \text{for }\, t \in (0,2). 
\end{equation*}
This is well defined due to \cref{assume m_u}. For $x \in V$ we define $w_2(x)$ by 
\begin{equation*}
	w_2(x) := \int_0^{|x-x_0|-1} f(t)\, dt. 
\end{equation*}
Then $w_2 = 0$ on $\partial B(x_0,1)$. By choosing $\mu_1 = M_v/3$ gives $w_2(x) <M_v$ for all $x \in\partial B(x_0,3)$. Therefore, by continuity we may choose $\mu_1 \geq M_v/3$ such that $w_2 = M_v$ on $\partial B(x_0,3)$. Finally we choose $\tilde{C}$ so large that $w_2$ satisfies \cref{model2.1} in $V$. Note that it follows from \cref{assume m_u} that, $ \inf_{t \in (0,2)} f >0$. Hence we have 
\begin{equation}
	\nonumber \label{} \inf_{V} |Dw_2| \geq \inf_{t \in (0,2)} f >0\,. 
\end{equation}

To prove the claim we will show that 
\begin{equation}
	\label{bdry hr 1} \sup_{ t \in (0,1) } \frac{w_2(te_N)}{w_1(te_N)} \leq \frac{\mu_1}{\mu_0} 
\end{equation}
and that 
\begin{equation}
	\label{bdry hr 2} \int_{\mu_0}^{\mu_1} \frac{dt}{\phiRb(t)} \leq C. 
\end{equation}
Let us study the functions $\tilde{w}_1(t) =w_1(te_N)$ and $\tilde{w}_2(t) =w_2(te_N)$ for $t \in [0,1]$. By construction we have that $\tilde{w}_1'(t) =g(t)$ and $\tilde{w}_2'(t) =f(t)$. Since $g' \geq 2\phiRb(g)$ and $f' \leq - 2\phiRb(f)$ we conclude that $\tilde{w}_1$ is convex and $\tilde{w}_2$ is concave. In particular, for every $t \in (0,1)$ we have 
\begin{equation*}
	\tilde{w}_1(t) \geq \tilde{w}_1'(0)\, t = \mu_0 \, t\qquad \text{and} \qquad \tilde{w}_2(t) \leq \tilde{w}_2'(0)\, t = \mu_1 \,t. 
\end{equation*}
In particular, we have 
\begin{equation*}
	\sup_{ t \in (0,1) } \frac{\tilde{w}_2(t)}{\tilde{w}_1(t)} \leq \frac{\mu_1}{\mu_0}\,, 
\end{equation*}
which is \cref{bdry hr 1}. 

Recall that $\tilde{w}_1(0) = 0$ and $\tilde{w}_1(1) = m_u$. By the mean value theorem there exists $\xi \in (0,1)$ such that $m_u = \tilde{w}'_1(\xi) = g(\xi)$. By \cref{construct g} we have 
\begin{equation*}
	1 \geq \xi = \int_{\mu_0}^{g(\xi)} \frac{dt}{\tilde{C}\phiRb(t)} = \int_{\mu_0}^{m_u} \frac{dt}{\tilde{C}\phiRb(t)}. 
\end{equation*}
Similarly we deduce that 
\begin{equation}
	\nonumber \int_{M_v}^{\mu_1} \frac{dt}{\phiRb(t)} \leq 3\tilde{C}. 
\end{equation}
Since $u(A) = v(A)$ the estimate \cref{bdry hr 2} follows from the previous two inequalities, \cref{estimate m_u,estimate M_v}. 

We need to deal the case when either \cref{assume m_u} or \cref{assume M_v} does not hold. In this case the result is almost trivial since we do not claim that the ratio $v/u$ is bounded. Assume first that \cref{assume m_u} does not hold. Then we simply choose $\mu_0 = 0$ and $\mu_1 = u(A)$. The estimate \cref{bdry hr 2} follows from \cref{estimate m_u} as follows 
\begin{equation*}
	\int_{0}^{u(A)} \frac{dt}{\phiRb(t)} \leq \int_{0}^{m_u/3} \frac{dt}{\phiRb(t)} + \int_{m_u/3}^{m_u} \frac{dt}{t} + \int_{m_u}^{u(A)} \frac{dt}{\phiRb(t)} \leq 4\tilde{C}+ \log 3 + C. 
\end{equation*}
If \cref{assume M_v} does not hold, we choose $\mu_0 = v(A)$ and $\mu_1 = \infty$. Then by \cref{estimate M_v} we have 
\begin{equation*}
	\int_{v(A)}^\infty \frac{dt}{\phiRb(t)} \leq \int_{v(A)}^{M_v} \frac{dt}{\phiRb(t)} + \int_{M_v}^{\infty} \frac{dt}{t} \leq 2\tilde{C}+ C. 
\end{equation*}
\qed

\subsection{Example for the sharpness of the boundary Harnack principle}

Here we discuss the sharpness of \Cref{the boundary Harnack}. We will only consider the case of the $p(x)$-Laplace equation and show that \Cref{p(x) bHp} is sharp. To simplify the argument we construct the example for cubes in the plane. To this aim we construct two non-negative $p(x)$-harmonic functions in the cube $Q = (0,1)^2 \subset \er^2$ such that $v(x_c) \leq u(x_c)$ at the center point $x_c =(1/2,1/2)$ and $u,v = 0$ at the bottom of the cube $(0,1)\times \{0\}$. We will show that the ratio in a smaller cube $Q' = (1/8,7/8)\times (0,1/2)$
\[ \sup_{x \in Q'} \frac{v(x)}{u(x)} \]
is not uniformly bounded, but it depends on the value $u(x_c)$ as in the statement of \Cref{p(x) bHp}.

First let us choose $p(\cdot) \in C^\infty(Q)$ to be
\[ p(x) = 3 - x_1, \qquad \text{where $x = (x_1,x_2)$}\,. \]
Then the $p(x)$-Laplace equation \cref{px} in non-divergence form for smooth functions with non-vanishing gradient reads as 
\begin{equation}
	\label{px equation} -\Delta w - (1-x_1)\Delta_\infty w = -\log (|\nabla w|) \, w_{x_1}, 
\end{equation}
where $\Delta_\infty w = \big \langle D^2w \frac{Dw}{|Dw|}, \frac{Dw}{|Dw|} \big \rangle$ denotes the infinity Laplacian. The point is that the equation is homogeneous for functions of type $u(x) = f(x_2)$ and non-homogeneous for $u(x)= g(x_1)$.

Let $H_{min} > 10^4$ be a constant to be fixed, and consider $H \geq H_{min}$. We define $u \in C^2(Q)$ simply to be 
\begin{equation*}
	u(x) = 2 H x_2. 
\end{equation*}
Then $u$ is a solution of \cref{px equation} and satisfies $u(x_c) = H$ at the center point $x_c =(1/2,1/2)$ and $u(x) = 0$ when $x_2 = 0$. Let us construct a solution $v$ such that $v(x) = 0$ when $x_2 = 0$, 
\begin{equation}
	\label{condition 1 for v} v(x_c) \leq H 
\end{equation}
and at a point $\hat{x} = (7/8,1/2) \in \overline{Q'}$ the following holds 
\begin{equation}
	\label{condition 2 for v} v(\hat{x}) \geq H^{\gamma} 
\end{equation}
for some $\gamma>1$. This will prove that the ratio satisfies
\[ \sup_{x \in Q'} \frac{v(x)}{u(x)} \geq H^{\gamma-1} \]
since $u(\hat{x}) = H$, this implies that the power-like behavior observed in \Cref{p(x) bHp} is sharp.

To this aim we choose $K>1$ to be the number which satisfies 
\begin{equation}
	\label{choice fo K:M} H= \int_0^{1/2}e^{e^{K+s/2}}\,ds, \qquad \text{and define} \qquad M := \int_0^{1} e^{e^{K+s/2}}\,ds. 
\end{equation}
Note that since $e^{e^{K+s/2}}$ is increasing we have 
\begin{equation}
	\label{estimate for H} H \leq e^{e^{K+1/4}}\,. 
\end{equation}
We will need the following easy estimate.
\begin{lemma}
	\label{retarded estimate} Let $\eps \in (0,2^{-2})$ be fixed, then there exists a $\hat K(\eps)$ such that for all $K \geq \hat K$ the following holds 
	\begin{equation}
		\nonumber \int_0^{1} e^{e^{K+s/2}}\,ds \geq e^{e^{K+1/2-\eps}} 
	\end{equation}
\end{lemma}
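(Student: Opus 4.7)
The plan is to exploit the rapid monotone growth of the integrand $s \mapsto e^{e^{K+s/2}}$ and lower bound the integral by its values on a short end-interval $[1-\delta,1]$. Since the integrand is increasing, for any $\delta \in (0,1)$ I get
\begin{equation*}
\int_0^1 e^{e^{K+s/2}}\,ds \;\geq\; \delta \cdot e^{e^{K+(1-\delta)/2}} \;=\; \delta \cdot e^{e^{K+1/2-\delta/2}}.
\end{equation*}
The crucial observation is that if I pick $\delta$ strictly less than $2\eps$, then the outer exponent $K+1/2-\delta/2$ strictly exceeds $K+1/2-\eps$, producing a genuine exponential gap that can later absorb the multiplicative loss $\delta$ in front.

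Second, I would simply set $\delta = \eps$, which is permissible since $\eps < 2^{-2} < 1$. It then suffices to verify
\begin{equation*}
\eps \cdot e^{e^{K+1/2-\eps/2}} \;\geq\; e^{e^{K+1/2-\eps}},
\end{equation*}
which upon taking the logarithm twice becomes
\begin{equation*}
e^{K+1/2-\eps}\bigl(e^{\eps/2}-1\bigr) \;\geq\; \log(1/\eps).
\end{equation*}
Since $e^{\eps/2}-1$ is a fixed positive number depending only on $\eps$, the left-hand side grows like $e^K$, while the right-hand side is a constant in $K$. Therefore the inequality holds for all $K \geq \hat K(\eps)$ with the explicit threshold
\begin{equation*}
\hat K(\eps) \;=\; \log\!\left(\frac{\log(1/\eps)}{e^{\eps/2}-1}\right) - \tfrac12 + \eps.
\end{equation*}

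There is no real obstacle here: the lemma is essentially the remark that a doubly-exponential integrand concentrates its integral in an arbitrarily short neighborhood of the right endpoint. The only tuning required is choosing $\delta < 2\eps$ so that the gap between $e^{K+1/2-\delta/2}$ and $e^{K+1/2-\eps}$ is itself exponentially large in $K$, which comfortably dominates the logarithmic loss $\log(1/\delta)$ coming from the length factor.
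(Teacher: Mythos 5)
Your proof is correct and follows essentially the same route as the paper: both exploit monotonicity of the doubly exponential integrand to reduce the claim, via a rectangle bound on a short interval near $s=1$, to the fact that $e^{K+c}\bigl(e^{\eps/2}-1\bigr)\geq \log(1/\eps)$ for all large $K$ (the paper phrases this as $\tfrac{1}{\eps}\leq \bigl(e^{e^{K}}\bigr)^{e^{\eps/2}-1}$ after comparing the integrals over $[1-2\eps,1-\eps]$ and $[1-\eps,1]$). Your version is slightly more streamlined, using a single rectangle bound on $[1-\eps,1]$ and giving an explicit threshold $\hat K(\eps)$; the only blemish is the phrase ``taking the logarithm twice,'' since a single logarithm plus rearrangement already yields your displayed inequality.
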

\begin{proof}
	First let $K > 1$, and note that $f(s) = e^{e^{K+s/2}}$ is a strictly increasing function. Then from the mean value theorem we get 
	\begin{equation}
		\nonumber \int_{1-2\eps}^{1-\eps} e^{e^{K+s/2}} ds \geq \eps e^{e^{K+1/2-\eps}}\,. 
	\end{equation}
	It is now enough to show that 
	\begin{equation}
		\nonumber \frac{1}{\eps} \int_{1-2\eps}^{1-\eps} e^{e^{K+s/2}} ds \leq \int_{1-\eps}^{1} e^{e^{K+s/2}} ds. 
	\end{equation}
	A change of variables leads to 
	\begin{equation}
		\nonumber \frac{1}{\eps} \int_{1-2\eps}^{1-\eps} e^{e^{K+s/2}} ds \leq \int_{1-2\eps}^{1-\eps} e^{e^{K+s/2+\eps/2}} ds. 
	\end{equation}
	We now see that it is enough to prove the much stronger inequality 
	\begin{equation}
		\nonumber \frac{1}{\eps} \leq \left [e^{e^{K}} \right ]^{e^{\eps/2}-1} \,, 
	\end{equation}
	which is obviously true for a large enough $K(\eps)$ since $e^{\eps/2}-1 > 0$. 
\end{proof}
Let us denote $\Gamma = \{1\}\times (0,1) \subset\partial Q$. We choose $v$ to be the solution of the Dirichlet problem 
\begin{equation}
	\label{veq} 
	\begin{cases}
		&-\Delta v - (1-x_1)\Delta_\infty v = -\log (|\nabla v|) \, v_{x_1}, \\
		&v = M \,\, \text{on} \,\,\Gamma, \\
		&v = 0 \,\, \text{on} \,\,
		\partial Q \setminus \Gamma. 
	\end{cases}
\end{equation}
Let us show that $v$ satisfies \cref{condition 1 for v} and \cref{condition 2 for v}.

To show \cref{condition 1 for v} we construct a barrier function $\varphi$ such that $\varphi \geq v$ in $Q$ and $\varphi(x_c) = H$. We choose $\varphi(x) = F(x_1)$ where $F$ is an increasing and convex function which is a solution of
\[ F'' = \frac{1}{2}\log (F') \, F' \]
with $F(0)= 0$. We may solve the above equation explicitly by
\[ F(t) = \int_0^{t}e^{e^{K+s/2}}\,ds \]
for any $K \in \er$. When we choose $K$ as in \cref{choice fo K:M} we get $\varphi(x_c)= F(1/2)=H$. Moreover by \cref{choice fo K:M} it holds that $F(1)=M$ and therefore $\varphi \geq v$ on $\partial Q$. It is easy to see that $\varphi \in C^2(Q)$ satisfies
\[ -\Delta \varphi - (1-x_1)\Delta_\infty \varphi >-\log (|\nabla \varphi|) \, \varphi_{x_1} \qquad \text{in }\, Q. \]
Since $v$ is a solution of \cref{veq} $v-\varphi$ does not attain a local maximum. Thus from $\varphi \geq v$ on $\partial Q$ it follows that $\varphi \geq v$ in $Q$. Hence we have \cref{condition 1 for v}. 

To show \cref{condition 2 for v} we construct a barrier function $\psi$ in $D:= Q \cap B(x_b,1/2)$, where $x_b = (5/4,1/2)$, such that $\psi \leq v$ in $D$ and $\psi(\hat{x}) \geq H^\gamma$. To this aim we define $\psi(x) = G(|x-x_b|)$ where $G :[1/4,1/2] \to\er$ is decreasing, non-negative and convex function such that $G(1/2)=0$, $G(1/4)=M$ and 
\begin{equation}
	\label{equation for G} G'' \geq \log |G'| \, |G'| + 4|G'|. 
\end{equation}
Let us for a moment assume that such a function exists. The inequality \cref{equation for G} implies that $\psi \in C^2(D)$ satisfies
\[ 
\begin{split}
	-\Delta \psi(x) - (1-x_1)\Delta_\infty \psi(x) &= -(2-x_1)G''(|x- x_b|) -\frac{G'(|x- x_b|)}{|x- x_b|} \\
	&< -G''(|x- x_b|) +4|G'(|x- x_b|)| \\
	&\leq -\log |G'(|x- x_b|)| \, |G'(|x- x_b|)| \\
	&\leq -\log |D\psi(x)| \, \psi_{x_1}(x) 
\end{split}
\]
for every $x \in D$. Therefore $v-\psi$ does not attain local minimum in $D$. From the conditions $G(1/2)=0$ and $G(1/4)=M$ it follows that $\psi \leq v$ on $\partial D$. Therefore $\psi \leq v$ in $D$.

To find $G$, we denote $G' =g$ and define
\[ g(t):=-e^{e^{R-(1+\eps)t}}, \]
where the large parameter $R \in \er$ and the small parameter $\eps \in(0,1)$ are chosen later. When $R>1$ is large $G' =g$ satisfies \cref{equation for G}. To see this note that 
\begin{equation}
	\nonumber g' = (1+\eps)e^{R-(1+\eps)t}|g|. 
\end{equation}
Thus \cref{equation for G} becomes 
\begin{equation}
	\nonumber (1+\eps)e^{R-(1+\eps)t}|g| \geq \log(|g|)|g|+4|g|. 
\end{equation}
After rewriting, this becomes 
\begin{equation}
	\nonumber \eps e^{R-(1+\eps)t} \geq 4, 
\end{equation}
which is true if $R > \hat R(\eps)$. 

We define
\[ G(t) = M + \int_{1/4}^t g(s) \, ds\,. \]
Let us first choose $H_{min}(\eps)$, and thus $M$, large enough such that
\[ M > \int_{1/4}^{1/2} e^{e^{\hat R-(1+\eps)s}}\, ds > 0 \]
and choose $R\geq \hat R(\epsilon)$ such that
\[ G(1/2) = M- \int_{1/4}^{1/2} e^{e^{R-(1+\eps)s}}\, ds = 0. \]
Then we have $G(1/2)= 0$ and $G(1/4)=M$ as wanted. 

We need yet to show that $\psi(\hat{x}) \geq H^\gamma$. Recall that $\hat{x} = (7/8,1/2)$ and $x_b = (5/4,1/2)$. Hence $|\hat{x}-x_b|= 3/8$ and therefore $\psi(\hat{x}) = G(3/8)$. We use \Cref{retarded estimate}. \cref{choice fo K:M}, the definition of $G$ and the condition $G(1/2)=0$ to estimate 
\begin{equation}
	\label{estimate for R} e^{e^{K+1/2-\eps}} \leq \int_0^{1} e^{e^{K+s/2}}\,ds= M = \int_{1/4}^{1/2} e^{e^{R-(1+\eps)s}}\, ds \leq e^{e^{R-(1+\eps)/4}} 
\end{equation}
by possibly enlarging $H_{min}(\eps)$. Therefore we may estimate the value of $\psi$ at $\hat{x}$ by \cref{estimate for R} and \cref{estimate for H} and get
\[ 
\begin{split}
	\psi(\hat{x}) = G(3/8) -G(1/2) &= \int_{3/8}^{1/2} e^{e^{R-(1+\eps)s}}\, ds \geq \int_{3/8}^{7/16} e^{e^{R-(1+\eps)s}}\, ds \\
	&\geq c e^{e^{R-7/16(1+\eps)}} \geq c \left(e^{e^{R-(1+\eps)/4}} \right)^{e^{-3/16-\eps}}\\
	&\geq c \left(e^{e^{K+1/2-\eps}} \right)^{e^{-3/16-\eps}} \qquad (\text{by \cref{estimate for R}}) \\
	&= c \left(e^{e^{K+1/4 }} \right)^{e^{1/16-2\eps}}\\
	&\geq c H^{e^{1/16-2\eps}} \qquad (\text{by \cref{estimate for H}}). 
\end{split}
\]
We define $\gamma= e^{1/16-2 \eps}$ which is bigger than one by choosing $\eps>0$ small enough, hence also fixing $H_{min}$. This shows \cref{condition 2 for v}.

\end{document}